\newrobustcmd*{\parentexttrack}[1]{%
  \begingroup
  \blx@blxinit
  \blx@setsfcodes
  \blx@bibopenparen#1\blx@bibcloseparen
  \endgroup}
\renewcommand{\cite}{\parencite}
\theoremstyle{plain}
\newtheorem{thm}{Theorem}
\newtheorem{lemm}[thm]{Lemma}
\newtheorem{prop}[thm]{Proposition}
\newtheorem{cor}[thm]{Corollary}
\theoremstyle{definition}
\theoremstyle{remark}
\newtheorem{rem}[thm]{Remark}
\newtheorem{question}{Question}
\DeclareMathOperator{\Id}{id}
\DeclareMathOperator{\loc}{loc}
\DeclareMathOperator{\diam}{diam}
\DeclareMathOperator{\e}{\bf e_3}
\newcommand{\x}{\scalebox{1.2}{$\chi$} } 
\newcommand{\br}{\overline}
\newcommand{\R}{\mathbb R}
\newcommand{\D}{\mathbb D}
\newcommand{\N}{\mathbb N}
\newcommand{\id}{\mathrm{id}}
\newcommand{\inter}{\mathrm{int}}
\numberwithin{equation}{section}
\numberwithin{thm}{section}
\begin{document}
	
	\title[Inverse absolute continuity of quasiconformal mappings]{On the inverse absolute continuity of quasiconformal mappings on hypersurfaces}
	
	\author{Dimitrios Ntalampekos}
	\address{Institute for Mathematical Sciences, Stony Brook University, Stony Brook, NY 11794, USA}
	\email{dimitrios.ntalampekos@stonybrook.edu}
	
	\author{Matthew Romney}
	\address{Department of Mathematics and Statistics, University of Jyv\"askyl\"a, P.O. Box 35 (MaD), FI-40014, University of Jyv\"askyl\"a, Finland}
	\email{matthew.d.romney@jyu.fi}
	
	\thanks{The first author was partially supported by NSF grant DMS-1506099. The second author was supported by the Academy of Finland grant 288501 and by the ERC Starting Grant 713998 GeoMeG} 
	
	\subjclass[2010]{Primary: 30C65; Secondary: 30L10}
	\date{\today}
	\keywords{Inverse absolute continuity, Quasiconformal, Quasisymmetric}

	\begin{abstract}
		We construct quasiconformal mappings $f\colon \mathbb{R}^{3} \rightarrow \mathbb{R}^{3}$ for which there is a Borel set $E \subset \mathbb{R}^2 \times \{0\}$ of positive Lebesgue $2$-measure whose image $f(E)$ has Hausdorff $2$-measure zero. This gives a solution to the open problem of inverse absolute continuity of quasiconformal mappings on hypersurfaces, attributed to Gehring. By implication, our result also answers questions of V\"ais\"al\"a and Astala--Bonk--Heinonen. 
	\end{abstract}
	
	\maketitle
	
	\section{Introduction} \label{sec:introduction}
	
	In this paper, we give a solution to the problem of {\it inverse absolute continuity of quasiconformal mappings on hypersurfaces} for two-dimensional hypersurfaces in $\mathbb{R}^3$. The general problem asks whether a quasiconformal mapping $f\colon \R^{n+1}\to \R^{n+1}$ ($n\geq 2$), restricted to a smooth ($n$-dimensional) hypersurface $Z$ in $\mathbb{R}^{n+1}$, always maps sets of positive Hausdorff $n$-measure to sets of positive Hausdorff $n$-measure. We show, by construction, that this  may fail to be the case when $n=2$. We work with the simplest case of $Z = \mathbb{R}^2 \times \{0\}$. Our construction scheme becomes technically more difficult in higher dimensions, and so in this paper we only consider the case $n=2$. We believe that the main idea should be applicable in all dimensions.
	
	The inverse absolute continuity problem for quasiconformal mappings originated in the 1970s from work of Gehring on distortion of sets of a given Hausdorff dimension under a quasiconformal mapping \cite{Gehr:75}, \cite{Gehr:76}. A statement can be found in Section 10 of the 1982 survey article of Baernstein and Manfredi \cite{BaeMan:82} and in Problem 7.55 of Hayman and Lingham's problem list \cite{HL:19}, although the question is attributed to Gehring. See also \cite[Sec. 1]{ABH:02}, \cite[Sec. 6]{Hei:96}, \cite[Sec. 8]{HeiKos:94}, \cite[Sec. 17]{Raj:17} for discussion around this and related problems. 
	
	Let us recall the fundamental definition. Let $n \geq 2$ be an integer and $U,V \subset \mathbb{R}^n$ be domains. A homeomorphism $f\colon U \rightarrow V$ is  {\it quasiconformal} if it is in the Sobolev space $W_{\loc}^{1,n}(U, \mathbb{R}^n)$ and satisfies the differential inequality
	\begin{equation} \label{equ:analytic_definition}
	\|Df(x)\|^n \leq K J_f(x) 
	\end{equation}
	for a.e.\ $x \in U$ and some $K\geq 1$ independent of $x$. In this case, we also say that $f$ is {\it $K$-quasiconformal}. Here, $\|Df(x)\|$ is the operator norm of the matrix of partial derivatives of $f$, and $J_f$ is the Jacobian of $f$. The smallest constant $K$ for which \eqref{equ:analytic_definition} is satisfied is called the {\it dilatation} of $f$. Geometrically, a quasiconformal mapping takes an infinitesimal ball to a topological ball of uniformly bounded eccentricity.
	
	Equivalently, the homeomorphism $f\colon U \rightarrow V$ is quasiconformal if and only if it is orientation-preserving and there exists a constant $H \geq 1$ such that 
	\begin{equation} \label{equ:qc_definition}
	\limsup_{r \rightarrow 0} \frac{\sup\{|f(x) - f(y)|: |x-y| \leq r \}}{\inf\{|f(x) - f(y)|: |x-y| \geq r\}} \leq H
	\end{equation}
	for all $x \in U$. However, it is the analytic description of quasiconformal mappings in \eqref{equ:analytic_definition} that is most convenient for our purposes and that we will use as our working definition throughout this paper. The standard reference for the theory of Euclidean quasiconformal mappings is the lecture notes of V\"ais\"al\"a \cite{Vais:71}. 
	
	A mapping $f\colon (X,\mu) \rightarrow (Y, \nu)$ between measure spaces is {\it absolutely continuous in measure}, or simply {\it absolutely continuous}, if $\nu(f(E)) = 0$ for all measurable sets $E \subset X$ satisfying $\mu(E) = 0$. The property of absolute continuity in measure is also called {\it Lusin's condition (N)} in the literature. An important fact is that any quasiconformal mapping between domains in $\mathbb{R}^n$, where $n \geq 2$, is absolutely continuous with respect to Lebesgue $n$-measure. See \cite[Theorem 33.2]{Vais:71}.
	
	A version of the inverse absolute continuity problem can be stated for the related notion of quasisymmetric mappings. Quasisymmetric mappings are a generalization of quasiconformal mappings that is well-suited for the metric space setting. Let $(X, d_X)$, $(Y, d_Y)$ be metric spaces. A homeomorphism $f\colon  X \rightarrow Y$ is {\it quasisymmetric} if there is a homeomorphism $\eta\colon [0, \infty) \rightarrow [0, \infty)$ such that 
	\begin{equation} \label{equ:qs_definition}
	\frac{d_Y(f(x), f(y))}{d_Y(f(x), f(z))} \leq \eta \left( \frac{d_X(x,y)}{d_X(x,z)} \right)
	\end{equation}
	for all triples of distinct points $x,y,z \in X$. It is simple to see that the condition \eqref{equ:qs_definition} implies the condition \eqref{equ:qc_definition} in the Euclidean case, so that orientation-preserving quasisymmetric mappings are quasiconformal. In fact, for homeomorphisms of $\mathbb{R}^n$, the converse also holds: quasiconformal mappings are quasisymmetric. The quasisymmetry condition was first introduced by Ahlfors and Beurling in \cite{AB:56}, where they show that any quasiconformal homeomorphism of the upper half-plane induces a quasisymmetric mapping of the real line. They moreover show that a quasisymmetry of the real line may fail to be absolutely continuous with respect to 1-dimensional Lebesgue measure; see also more precise results by Tukia \cite{Tuk:89}. Quasisymmetric mappings were first explicitly studied by Tukia and V\"ais\"al\"a in \cite{TukVai:80}. See also the book of Heinonen \cite[Ch.\ 10-11]{Hei:01} for the basic theory.
	
	The quasisymmetric version of the problem asks, for $n\geq 2$, whether the inverse of a quasisymmetric mapping $f\colon \mathbb{R}^n \rightarrow (X,d)$, where $(X,d)$ is some metric space, must be absolutely continuous with respect to the Hausdorff $n$-measure. These can be found as Questions 15 and 16 in \cite{HeiS:97}, where Question 15 has the additional assumption that $X$ has locally finite Hausdorff $n$-measure. A solution to this problem was recently given by the second named author in \cite{Rom:18}. More precisely, a metric $d$ on $[0,1]^n$ ($n \geq 2$) is constructed for which the identity map $\Id \colon ([0,1]^n,|\cdot|) \rightarrow ([0,1]^n, d)$ is not absolutely continuous. The construction can be carried out so that a set of full measure is mapped to a set of arbitrarily small positive Hausdorff dimension. The basic idea of the present paper is to carry out a similar construction directly in Euclidean space. 
	
	As in \cite{Rom:18}, we give two versions of the main theorem. In the first version, a stronger conclusion is possible if one does not require the image of $\mathbb{R}^2 \times \{0\}$ to have locally finite Hausdorff $2$-measure. The notation $\mathcal{H}^2$ is used here and throughout this paper for Hausdorff $2$-measure in $\R^3$ and for Lebesgue $2$-measure in the plane, which coincides with Hausdorff $2$-measure up to a normalizing factor. 
	
	\begin{thm} \label{thm:main1}
		Let $A \subset \mathbb{R}^2 \times \{0\}$ be a Borel set. There exists a subset $A' \subset \R^2 \times \{0\}$ such that $\mathcal{H}^2(A \setminus A') = 0$ and a quasiconformal mapping $f\colon \mathbb{R}^3 \rightarrow \mathbb{R}^3$ such that $f(A')$ has Hausdorff $2$-measure zero.   
	\end{thm}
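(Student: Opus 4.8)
The plan is to construct $f$ as a locally uniform limit $f=\lim_{k\to\infty}g_k$, where $g_k=f_k\circ f_{k-1}\circ\cdots\circ f_1$ and each $f_k$ is an orientation-preserving $(1+\epsilon_k)$-quasiconformal homeomorphism of $\mathbb{R}^3$ that equals the identity outside a finite pairwise disjoint union of ``tall'' rectangular boxes, i.e.\ boxes with one dimension (the one transverse to $\mathbb{R}^2\times\{0\}$) much longer than the other two. If the parameters are chosen so that $\sum_k\epsilon_k<\infty$ and $\sum_k\sup_{\mathbb{R}^3}|f_k-\Id|<\infty$, then each $g_k$ is $K$-quasiconformal with $K=\prod_k(1+\epsilon_k)<\infty$, the $g_k$ converge locally uniformly to a map $f$ that stays within a bounded distance of the identity, and a locally uniform limit of $K$-quasiconformal homeomorphisms of $\mathbb{R}^3$ is $K$-quasiconformal or constant, the latter not occurring here since $f$ stays close to the identity; see \cite{Vais:71}. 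So the only real task is to arrange the combinatorics and geometry of the $f_k$ so that a full-measure subset of $A$ is collapsed.

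I would organize the maps $f_k$ along a Cantor-type tree of planar dyadic squares. After replacing $A$ by a bounded set, then by an open superset $U$ of finite measure (and, for a general Borel $A$, exhausting $\mathbb{R}^2\times\{0\}$ by unit cubes and running a rescaled, disjointly supported copy of the construction in each), one starts from a dyadic decomposition of $U$. Passing from generation $k$ to generation $k+1$, each generation-$k$ square $Q$ is split into a central sub-square $Q^{\ast}$ and a thin ``collar'' $Q\setminus Q^{\ast}$, and $Q^{\ast}$ is dyadically subdivided to produce the generation-$(k+1)$ squares. The map $f_{k+1}$ is, on a tall box erected over each $g_k(Q)$, a single model quasiconformal map, rescaled; it fixes the boundary of the box, it contracts $g_k(Q^{\ast})$ by a definite ratio $\mu_k\in(0,1)$, and it disposes of the surface area that the contraction would otherwise have to destroy by bending the collar into a thin ``curtain'' reaching up into the box. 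Using tall boxes is what allows the contraction to be released back to the identity gently along the long axis, so that by taking the aspect ratio large the dilatation of the model map becomes $1+\epsilon_{k+1}$ with $\epsilon_{k+1}$ as small as desired. One takes $\mu_k$ with $\sum_k(1-\mu_k)=\infty$, so that $\prod_{j\le k}\mu_j\to 0$, while keeping the relative collar widths $\theta_k$ summable.

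The set $A'$ is then read off the tree. Since the generation-$k$ collars have total measure at most $\theta_k\,\mathcal{H}^2(U)$ and $\sum_k\theta_k<\infty$, the Borel--Cantelli lemma shows that $\mathcal{H}^2$-a.e.\ point of $A$ lies in only finitely many collars; applying the construction recursively inside the collars upgrades this to a genuinely null exceptional set, and $A'$ is the set of points that, from some generation on, always lie in central sub-squares. For $x\in A'$, $f(x)=\lim_k g_k(Q_k(x))$ with $Q_k(x)$ the generation-$k$ square through $x$; covering $A'$ by the $N_k$ generation-$k$ squares that meet it, and noting that $g_k$ sends each into a set of diameter comparable to its side length times $\prod_{j\le k}\mu_j$, gives $\sum(\diam)^2\lesssim\mathcal{H}^2(U)\prod_{j\le k}\mu_j^2\to 0$, hence $\mathcal{H}^2(f(A'))=0$.

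The heart of the matter, and the step I expect to be the main obstacle, is the construction of the model map: a quasiconformal self-map of a tall box, equal to the identity on the boundary, that effects a \emph{definite} metric contraction of most of the central square at a dilatation cost that is summable over the generations. A ``conformal factor'' deformation such as $(x,y,z)\mapsto(\lambda(z)x,\lambda(z)y,z)$ does not work: it costs dilatation of order $1-\mu_k$ per generation, because its Jacobian drops to $\mu_k^2$ throughout the transition region while its operator norm stays $\ge 1$, and $\sum_k(1-\mu_k)$ diverges. Moreover, since $g_k$ fixes the boundary of each generation-$0$ square and the flat square minimizes area among surfaces with that boundary, the image of such a square has $\mathcal{H}^2$-measure that can only grow; the collapse of $A'$ is therefore necessarily accompanied by a blow-up of its null complement, which is exactly what the curtains accomplish. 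Making all of this precise -- spreading the release of the contraction along the long axis, venting the unavoidable surface area through thin curtains, and keeping both the quasiconformality bound and the self-similarity uniform across all generations -- is the delicate part; the remaining steps are bookkeeping.
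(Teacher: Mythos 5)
Your overall scheme — a Cantor-type tree of generations, model maps supported in disjoint tall prisms, a Borel--Cantelli/covering estimate to kill the image measure — is broadly the same skeleton as the paper's, and the obstruction you identified (the ``conformal factor'' deformation does not work because the image surface must grow in area) is exactly correct. But there is a genuine gap at the step you yourself flagged as ``the heart of the matter,'' and it is not merely a matter of delicacy: the way your composition is set up requires something that the paper deliberately avoids requiring, because it very likely cannot be done.

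You need each $f_k$ to be \emph{globally} $(1+\epsilon_k)$-quasiconformal (so that $\prod_k(1+\epsilon_k)<\infty$ controls the dilatation of $g_k=f_k\circ\cdots\circ f_1$). However, the ``curtain'' construction, like the paper's fold-and-wrap construction, inevitably creates a small set of points (the creases at the base of the curtains, respectively the fold edges $\ell_{\pi j/k}$ and the circle $S_a$ in the paper) near which the dilatation is bounded away from $1$ by some constant $K(a,b)>1$ that does \emph{not} go to $1$ as you refine. You can shrink the measure of this bad set by making the curtains thin and numerous, but you cannot shrink the dilatation on it. This is not a technicality: in dimension $\geq 3$, Reshetnyak-type stability forces a $(1+\epsilon)$-quasiconformal self-map that is the identity on the boundary of a fixed bounded region to be uniformly within $\omega(\epsilon)\to 0$ of the identity, so a \emph{globally} $(1+\epsilon)$-quasiconformal model map cannot effect a fixed contraction $\mu<1$ when $\epsilon$ is small; and even with variable $\mu_k\to 1$, you have exhibited no construction meeting both the dilatation and contraction specifications simultaneously. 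Your plan therefore stalls exactly where you expected it to.

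The paper's actual resolution is structurally different and is the crucial idea you are missing. The model map $F_k$ is only $K(a,b)$-quasiconformal globally (with $K$ possibly large and independent of $k$), but it is $(1+\eta(k))$-quasiconformal on a specified open full-measure neighborhood $V_k$ of $(\mathbb D\setminus Z_k)\times\{0\}$ inside the cylinder, where $Z_k$ is the $1$-dimensional bad set of fold edges. The composition scheme (Section~\ref{sec:partition_composition}) then uses a Vitali cover of $G_J\setminus\varphi_J(Z_{k_m})$ to place each next-generation cylinder $U_{J,j_{m+1}}$ \emph{inside} the good set $V_J$ of the previous level. As a result, in the telescoping dilatation bound $(1+\eta(k_1))\cdots(1+\eta(k_{p-1}))\cdot K$ the large factor $K$ appears exactly once, at the smallest scale, and the rest is controlled by $\sum_m \eta(k_m)\le 1$. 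This bypasses entirely the need for a near-isometric model map. The other deviations in your write-up — using a fixed contraction $b$ with probability $a^2$ and the strong law of large numbers to get a geometric-mean contraction $\mu=b^{a^2}L^{1-a^2}<1$, rather than variable $\mu_k$ with $\sum(1-\mu_k)=\infty$ — are comparatively minor, but they also fit the ``uniform $K$-QC, occasional $K$-factor'' framework better than yours. If you replace your global $(1+\epsilon_k)$-QC requirement with ``$K$-QC globally, $(1+\eta_k)$-QC away from a null bad set, and place the next generation inside the good set,'' your outline becomes essentially the paper's.
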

	
	\begin{thm} \label{thm:main2}
		Let $A \subset \mathbb{R}^2 \times \{0\}$ be a Borel set, and let $\kappa>0$. There exists a subset $A'' \subset \R^2 \times \{0\}$ such that $\mathcal{H}^2(A \setminus A'') < \kappa$ and a quasiconformal mapping $f\colon \mathbb{R}^3 \rightarrow \mathbb{R}^3$ such that $f(A'')$ has Hausdorff $2$-measure zero and $f(\mathbb{R}^2\times \{0\})$ has locally finite Hausdorff $2$-measure. In fact, $f$ is locally Lipschitz.  
	\end{thm}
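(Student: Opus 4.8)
The construction of $f$ is iterative; here is the plan. I would produce $f$ as the locally uniform limit of maps $f_n = g_n\circ g_{n-1}\circ\cdots\circ g_1$, where each $g_k$ is the identity outside a disjoint family of ``boxes'' $\{B_{k,i}\}_i$ of generation $k$, and the boxes are nested: each generation-$(k+1)$ box lies in a sub-box $B_{k,i}'\subset B_{k,i}$ on which $g_k$ is a Euclidean similarity of a fixed ratio $\lambda\in(0,1)$. Each $g_k$ will be $K_0$-quasiconformal on $B_{k,i}$ with $K_0$ independent of $k$, but I would deliberately allow the Lipschitz constant of $g_k$ to grow, up to roughly $\lambda^{-(k-1)}$. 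This is compatible with $f$ being locally Lipschitz precisely because of the conformality on the sub-boxes: at almost every point of $\R^3$ the derivative $Df_n$ is a single factor $Dg_k$ (for the last generation $k$ at which the point moves) composed with at most $k-1$ contracting similarities of ratio $\lambda$, so $\|Df_n\|$ stays uniformly bounded while $\|Df_n\|^3\le K_0 J_{f_n}$ continues to hold with a $k$-independent constant. Passing to the limit then gives a $K_0$-quasiconformal, locally Lipschitz homeomorphism of $\R^3$.

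I would first reduce to the case that $A$ is bounded with $\mathcal H^2(A)<\infty$, handling the general case by exhaustion. The generation-$1$ boxes are chosen by a Vitali-type argument so that their horizontal slices cover $A$ up to a set of measure $<\kappa/2$, with total slice area comparable to $\mathcal H^2(A)$; then, inductively, inside the similarity image $g_k(B_{k,i}')$ of each generation-$k$ box one repeats the covering for the part of $A$ transported there by $f_k$, with an additional error $<\kappa 2^{-k-1}$, leaving the ``channel'' region $B_{k,i}\setminus B_{k,i}'$ unsubdivided. The set $A''$ of points of $A$ captured at every generation then satisfies $\mathcal H^2(A\setminus A'')<\kappa$; for Theorem~\ref{thm:main1} one drives the covering errors to $0$ to get $\mathcal H^2(A\setminus A')=0$. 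The key accounting point is that the ``real'' generation-$k$ boxes (the preimages of the nominal ones under $f_{k-1}$) are magnified by $\lambda^{-(k-1)}$, which exactly cancels the contraction produced by the preceding similarities; so, although the nominal slices shrink to measure zero, $A''$ keeps almost all of the measure of $A$.

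On $A''$ the derivative of $f$ along $\R^2\times\{0\}$ is a limit of $\lambda^{k}$ times rotations and hence vanishes, so the area formula (applicable since $f$ is Lipschitz) yields $\mathcal H^2(f(A''))=0$; equivalently, $f(A'')$ is covered by the slices of the nominal generation-$k$ boxes, whose total area is comparable to $\lambda^{2(k-1)}\mathcal H^2(A)\to 0$. For local finiteness of $\mathcal H^2\bigl(f(\R^2\times\{0\})\bigr)$ one notes that the $f$-image of a generation-$k$ box slice has area comparable to that slice — the channel expansion, of order $\lambda^{-2(k-1)}$, being offset by the channel's small relative area, of order $\lambda^{2(k-1)}$ — so the images of the generation-$k$ boxes have total area $\lesssim\lambda^{2(k-1)}\mathcal H^2(A)$; summing this geometric series and adding the flat part of $\R^2\times\{0\}$ lying outside all boxes gives a finite bound.

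The hard part will be the construction of the single block $g_k$ on a box $B_{k,i}$: a $K_0$-quasiconformal self-homeomorphism fixing $\partial B_{k,i}$, compressing a sub-box whose slice carries all but a relative area $\epsilon_k$ of the slice of $B_{k,i}$ by the fixed ratio $\lambda$, and expanding the remaining thin channel region — of relative slice area $\epsilon_k$ of order $\lambda^{2(k-1)}$ — so as to fill up the rest of the box. The obstruction is that this expansion has factor of order $1/\epsilon_k\approx\lambda^{-2(k-1)}$, which is enormous, yet the dilatation must not degrade past the fixed $K_0$; a single radial collar is not enough, because matching the identity on $\partial B_{k,i}$ across such a collar would force a loss of a fixed fraction (comparable to $1-\lambda^2$) of the slice at every generation, emptying $A''$. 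The resolution I would pursue is to route the transition to the identity through a self-similar, Cantor-type network of channels and to exploit the extra dimension — taking the boxes very tall relative to their horizontal size — so that the large horizontal expansion in the channels is balanced by a comparable vertical one. Designing this block, verifying the uniform dilatation bound, and fixing the many geometric parameters (the horizontal sizes and heights of the boxes, the numbers $\epsilon_k$, and the shape of the channel network) so that all the boxes nest consistently and the preceding estimates hold is the technical core of the argument.
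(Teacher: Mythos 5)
Your proposal departs substantially from the paper's argument, and the step you defer as ``the technical core'' is not merely hard but impossible: no single block $g_k$ with the properties you require can exist. Concretely, you ask for a $K_0$-quasiconformal self-homeomorphism of a box $B$, equal to the identity on $\partial B$, that restricts to a Euclidean similarity of fixed ratio $\lambda\in(0,1)$ on a sub-box $B'$ whose horizontal slice carries a fraction $1-\epsilon_k$ of the slice of $B$, with $K_0$ independent of $\epsilon_k\to0$. Extend such a $g_k$ by the identity to a $K_0$-quasiconformal map of $\R^3$; then $g_k^{-1}$ is $\eta$-quasisymmetric for a modulus $\eta$ depending only on $K_0$. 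Let $c$ be the center of the slice disk of $B'$, $z$ a point on its boundary circle, and $w$ the nearest point of $\partial B$ (which is fixed), so $|z-w|\lesssim\epsilon_k\,|z-c|$. Applying quasisymmetry of $g_k^{-1}$ to the triple $g_k(z),g_k(c),g_k(w)$ gives
\[
\frac{|z-c|}{|z-w|}\ \leq\ \eta\!\left(\frac{|g_k(z)-g_k(c)|}{|g_k(z)-w|}\right)=\eta\!\left(\frac{\lambda\,|z-c|}{|g_k(z)-w|}\right),
\]
and the right-hand side stays bounded as $\epsilon_k\to0$ (since $|g_k(z)-w|$ is comparable to the size of $B$), while the left-hand side is $\gtrsim 1/\epsilon_k\to\infty$. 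Making the boxes tall, or routing the transition through a Cantor-type channel network, cannot help, because the obstruction only uses the global quasisymmetry of $g_k^{-1}$ on $\R^3$ and the position of three points, none of which depends on the aspect ratio or the internal geometry of the channel. So your scheme of driving the channel's relative measure to zero (which is forced by your requirement that the Lipschitz constant of $g_k$ be $\lesssim\lambda^{-(k-1)}$) is incompatible with a uniform dilatation bound.

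The paper avoids exactly this by keeping a \emph{fixed} contraction fraction: the contracting disk is $B(0,a)$ with $a\in(0,1)$ fixed, and the map on the annulus $a<r<1$ has fixed (if large) distortion $K(a,b)$. The price is that a point can land repeatedly in the bad annulus; this is handled probabilistically. The genuinely new idea is the folding/wrapping construction making each $F_k$ $(1+\eta(k))$-quasiconformal on an open full-measure subset of the slice, so that in the composition $g_1\circ\dots\circ g_m$ only the last factor costs $K$ and the earlier ones cost $1+\eta(k_q)$, yielding the uniform bound $eK$; and the contraction/expansion rates $b$ and $L$ are balanced via the law of large numbers applied to the random variable $X_m\in\{b,L\}$, with $b^{a^2}L^{1-a^2}<1$. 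For Theorem~\ref{thm:main2}, the Lipschitz bound is not obtained by shrinking channels but by a truncation: the random walk $\widetilde{Y}_m$ is stopped at $-M$, which gives the uniform Lipschitz constant $L^M$ and loses at most measure $((1-a^2)/a^2)^M$. Your derivative/area-formula argument for $\mathcal H^2(f(A''))=0$ and the accounting for local finiteness of the image would be fine if the blocks existed, but the block itself is the fatal gap.
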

	
	It is known from work of Tukia and V\"ais\"al\"a \cite{TukVai:82} that any quasiconformal mapping $f\colon \mathbb{R}^n \rightarrow \mathbb{R}^n$ extends to a quasiconformal mapping $
	\widehat{f}\colon \mathbb{R}^N \rightarrow \mathbb{R}^N$ for all dimensions $N>n$. In particular, for all $n \geq 2$, there exists a quasiconformal mapping $f\colon \mathbb{R}^n \rightarrow \mathbb{R}^n$ such that the conclusions of Theorem \ref{thm:main1} and \ref{thm:main2} hold for $\mathbb{R}^2 \times \{0\}^{n-2}$. 
	
	For a given $\lambda >0$, we may construct $f$ so that it is the identity map on $\{z \in \mathbb{R}^3: d(z,A) \geq \lambda\}$. As for other desirable properties of our construction, one might attempt to carry out this construction so that $f(A')$ has arbitrarily small positive Hausdorff dimension, or so that the dilatation $K$ is arbitrarily close to 1. In fact, a lower bound on the Hausdorff dimension of $f(A')$ is proved by Heinonen and Koskela in \cite[Theorem A]{HeiKos:94}, showing that the first of these goals is impossible. We do not know whether one can construct $f$ so that $f(A')$ has Hausdorff dimension less than 1. Note that, by \cite{Mey:09}, a quasiconformal mapping $f\colon \mathbb{R}^3 \to \mathbb{R}^3$ may map a full-measure subset of $\mathbb{R}^2 \times \{0\}$ onto a set of Hausdorff dimension smaller than that of $f(\mathbb{R}^2 \times \{0\})$. It also appears doubtful that an adaptation of our construction can have dilatation arbitrarily close to 1. This raises the following question. 
	
	\begin{question}
		Does there exist a value $K>1$, depending on $n$, such that the property of inverse absolute continuity on hypersurfaces holds for any $K$-quasiconformal mapping $f\colon \mathbb{R}^{n+1} \to \mathbb{R}^{n+1}$?
	\end{question} 
	
	We conclude the introduction with a discussion of the related literature. Theorem \ref{thm:main2} also resolves three other open problems in the literature, again for the case $n=2$. The first is Question 5.10 of V\"ais\"al\"a in \cite{Vai:81}. This asks whether a quasisymmetric embedding $f\colon \mathbb{R}^n \rightarrow \mathbb{R}^N$ ($N > n \geq 2$) can map a set of positive $n$-measure onto a set of Hausdorff $n$-measure zero. Since the restriction of the map $f$ in Theorem \ref{thm:main2} to the set $\mathbb{R}^2$ is quasisymmetric, we obtain the following corollary.
	
	\begin{cor}
		There exists a quasisymmetric embedding $f\colon \mathbb{R}^2 \rightarrow \mathbb{R}^3$ and a set $A \subset \mathbb{R}^2$ of positive Lebesgue $2$-measure such $f(\mathbb{R}^2)$ has locally finite Hausdorff $2$-measure and $f(A)$ has Hausdorff $2$-measure zero. 
	\end{cor}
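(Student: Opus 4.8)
The plan is to deduce this statement directly from Theorem \ref{thm:main2}. First I would apply that theorem to the Borel set $Q := [0,1]^2 \times \{0\}$, with the choice $\kappa := \mathcal{H}^2(Q)/2 > 0$. This yields a subset $A'' \subset \mathbb{R}^2 \times \{0\}$ with $\mathcal{H}^2(Q \setminus A'') < \mathcal{H}^2(Q)/2$, whence
\[
\mathcal{H}^2(A'') \geq \mathcal{H}^2(Q) - \mathcal{H}^2(Q \setminus A'') > \tfrac{1}{2}\mathcal{H}^2(Q) > 0,
\]
together with a quasiconformal homeomorphism $g \colon \mathbb{R}^3 \to \mathbb{R}^3$ such that $g(A'')$ has Hausdorff $2$-measure zero and $g(\mathbb{R}^2 \times \{0\})$ has locally finite Hausdorff $2$-measure.

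Identifying $\mathbb{R}^2$ with the plane $\mathbb{R}^2 \times \{0\} \subset \mathbb{R}^3$, I would then set $f := g|_{\mathbb{R}^2 \times \{0\}}$ and verify that $f\colon \mathbb{R}^2 \to \mathbb{R}^3$ is a quasisymmetric embedding. Since $g$ is a homeomorphism of $\mathbb{R}^3$, its restriction is a homeomorphism onto its image, so $f$ is an embedding. For quasisymmetry, I would invoke the standard fact, recalled in the introduction, that a quasiconformal self-homeomorphism of $\mathbb{R}^n$ is globally quasisymmetric (see \cite[Ch.\ 10--11]{Hei:01}); moreover, the restriction of any quasisymmetric map to an arbitrary subset of its domain is again quasisymmetric with the same distortion function $\eta$, as is immediate from \eqref{equ:qs_definition} since that inequality involves only triples of points in the domain. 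Hence $f$ is quasisymmetric.

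Finally I would take the set $A$ in the statement to be $A''$, now regarded as a subset of $\mathbb{R}^2$. Then $\mathcal{H}^2(A) = \mathcal{H}^2(A'') > 0$, the image $f(A) = g(A'')$ has Hausdorff $2$-measure zero, and $f(\mathbb{R}^2) = g(\mathbb{R}^2 \times \{0\})$ has locally finite Hausdorff $2$-measure, so all three conclusions hold. I do not anticipate any genuine obstacle here: the entire content of the corollary is packaged inside Theorem \ref{thm:main2}, and the only point requiring a (routine) argument is the passage from quasiconformality of $g$ on all of $\mathbb{R}^3$ to quasisymmetry of the restricted embedding $f$.
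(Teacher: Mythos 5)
Your proposal is correct and takes essentially the same route as the paper, which obtains the corollary as an immediate consequence of Theorem~\ref{thm:main2} via the standard facts that quasiconformal self-homeomorphisms of $\mathbb{R}^n$ are globally quasisymmetric and that restrictions of quasisymmetric maps remain quasisymmetric. You merely spell out the choice of seed set $Q$ and threshold $\kappa$ and the resulting positivity of $\mathcal{H}^2(A'')$, details the paper leaves implicit.
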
  
	
	The other two problems are due to Astala, Bonk and Heinonen \cite{ABH:02}. The first (Problem 1.3) concerns the boundary behavior of quasiconformal mappings $f\colon \mathbb{R}_+^{n+1} \rightarrow \mathbb{R}^{n+1}$ belonging to what they term the {\it Riesz class}.  The precise definition of the Riesz class is lengthy, so we omit it here. Problem 1.3 asks whether, for such a mapping $f$, the differential matrix of the induced boundary map on $\mathbb{R}^n \times \{0\}$ must vanish on a set of Lebesgue $n$-measure zero. As explained in \cite{ABH:02}, the mapping in Theorem \ref{thm:main2}, restricted to $\mathbb{R}_+^{3}$, lies in the Riesz class and hence provides a negative answer to this question. The second (Problem 4.2) asks the same question for certain conformal densities on $\mathbb{R}_+^{n+1}$, and again Theorem \ref{thm:main2} gives a negative answer. We refer the interested reader to the paper \cite{ABH:02} for a precise statement of these problems and the relevant definitions. 
	
	A direct source of inspiration for many of the ideas in our construction is a paper of Bishop \cite{Bish:99}, in which a quasiconformal mapping $f\colon \mathbb{R}^3 \rightarrow \mathbb{R}^3$ is constructed with the property that $f(\mathbb{R}^2\times \{0\})$ does not contain any rectifiable curves; see David--Toro \cite{DT:99} for a similar construction. See also the paper of Heinonen \cite{Hei:96} on the related question of boundary absolute continuity of quasiconformal mappings.
	
	A question of a similar nature to the inverse absolute continuity problem was asked by Bishop in \cite[Question 3]{Bish:94}:
	\begin{question}
		If $E\subset \R^2$ is a compact set with positive measure and empty interior, does there exist a homeomorphism $f\colon \R^2 \to \R^2$ that is conformal on $\R^2\setminus E$ and maps $E$ to a set of measure zero?
	\end{question}
	Despite some partial results by Kaufman and Wu \cite{KauWu:96} and the first named author \cite{Nta:18}, the question remains open. Motivated by that question, we ask whether $A'$ can be equal to $A$ in Theorem \ref{thm:main1}:
	\begin{question}
		If $A\subset \R^2 \times \{0\}$ is a compact set with positive $2$-measure and empty interior, does there exist a quasiconformal mapping $f\colon \R^3\to \R^3$ such that $f(A)$ has Hausdorff $2$-measure zero?
	\end{question}

	\subsection{Overview of the construction} 
	
	The heart of our approach is carried out in Section \ref{sec:qc_cylinder}. Here, we construct a family of quasiconformal self-mappings $\{F_k\}_{k \in \mathbb{N}}$ of the cylinder $T= \br {\mathbb{D}} \times [-2,2]$ with various properties, listed out in Proposition \ref{prop:basic_construction}. Here, $\mathbb{D}$ is the unit disk $\{(r,\theta): r <1\}$ (expressed in polar coordinates). The basic idea is that the mappings $F_k$ should fix the boundary $\partial T$ and contract distances on a large subset of $\mathbb{D} \times \{0\}$, while providing enough control on the behavior elsewhere. 
	More precisely, each $F_k$ is a similarity mapping from the disk $\{(r,\theta): r <a\}\times \{0\}$ onto the disk $\{(r,\theta): r <ab\}\times \{0\}$ for appropriate constants $a,b \in (0,1)$.    
	
	In Section \ref{sec:partition_composition}, we describe a scheme for composing rescaled and translated versions of the mappings $F_k$ constructed in Section \ref{sec:qc_cylinder} to give, in the limit, the mapping $f$ described in Theorems \ref{thm:main1} and \ref{thm:main2}. The main difficulty is to compose these mappings in such a way that we maintain a uniform bound on the pointwise dilatation. To achieve this, it it is crucial that the mappings $F_k\colon T\to T$ are very close to being conformal {(i.e., ($1+\eta(k)$)-quasiconformal for a small $\eta(k)>0$, with $\eta(k) \rightarrow 0$ as $k \rightarrow \infty$)} when restricted to an open, full-measure subset of $\br\D\times \{0\}$.
	
	\begin{figure}[t]
		\centering
		\begin{tikzpicture}[scale=.7]

%
%
%disks
%
%

\begin{scope}[shift=({-5,7})]
\draw[fill=black!40] (0,0,0) circle (3cm);
\draw[fill=black!20] (0,0,0) circle (2cm);
\end{scope}

\begin{scope}[shift=({5,7})]
\draw[fill=black!40] (0,0,0) circle (3cm);
\draw[fill=black!20] (0,0,0) circle (1.3cm);
\end{scope}

\begin{scope}[shift=({-9,11})]
\draw[fill=black!40] (0,0) rectangle (10,1);
\end{scope}

\draw[->] (-5,7) to [out=30, in =150] (5,7) node[anchor=north] {$B(0,ab)$};
\node (br) at (0,8) {$r\mapsto br$};
\node (Ba) at (-5,6) {$B(0,a)$};
\node[anchor=west] (log) at (-4, 10.3) {$\log$ (unwrap)};
\node[anchor=west] (fold) at (-2.5, 13) {fold and scale};
\node[anchor=west] (wrap) at (5, 10.8) {wrap around};
\node[anchor=west] (D1) at (-2.7,5) {$\mathbb D$};
\node[anchor=west] (D2) at (7.3,5) {$\mathbb D$};

\draw[->] (-5,9.5) to [out=90, in=200] (-4,10.8);
\draw[->] (-3,12.2) to [out=90, in =180] (0,14);
\draw[->] (5,11.5) to (5,9.5);
%
%
%folded
%
%

\begin{scope}[shift=({0,14}),scale=.5,line cap=round,line join=round,>=triangle 45,x=1.0cm,y=1.0cm, z=0.5cm,rotate around y=25, rotate around x=90]
\foreach \i in {1,3,5,7,9,11,13,15,17,19,21}
{
\coordinate (B1) at (\i,0,0);
\coordinate (B2) at (\i+1,0,1);
\coordinate (B3) at (\i+1,5,1);
\coordinate (B4) at (\i,5,0);

\coordinate (A1) at (\i-1,0,1);
\coordinate (A2) at (\i,0,0);
\coordinate (A3) at (\i,5,0);
\coordinate (A4) at (\i-1,5,1);

\draw[fill=black!40] (A1)--(A2)--(A3)--(A4)--cycle;
\draw[fill=black!40] (B1)--(B2)--(B3)--(B4)--cycle;
}

\end{scope}

%
%
%image folded+disk
%
%

\draw (0,0) ellipse (5cm and 2.5cm);
\draw[fill=black!20] (0,0) ellipse (2.5cm and 1.25cm);
\node[anchor=south] (Bab2) at (0,0) {$B(0,ab)$}; 

\tikzmath{\k=40;}
\foreach \i in {5,...,15}
{
	\tikzmath{\s=sin(360*(2*\i)/\k);\ss=sin(360*(2*\i+1)/\k);\sss=sin(360*2*(\i+1)/\k); \cc=cos(360*(2*\i+1)/\k); \c=cos(360*(2*\i)/\k); \ccc=cos(360*2*(\i+1)/\k);}
	
	\draw[fill=black!40]  (2.5*\c,1.25*\s)--(5*\c,2.5*\s)--(5*\cc, 2.5*\ss+.55)-- (2.5*\cc,1.25*\ss+.55)--cycle;
	\draw[fill=black!40]  (2.5*\cc,1.25*\ss+.55)--(5*\cc,2.5*\ss+.55)--(5*\ccc, 2.5*\sss)-- (2.5*\ccc,1.25*\sss)--cycle;
}

\tikzmath{\k=40;}
\foreach \i in {4,...,-4}
{
	\tikzmath{\s=sin(360*(2*\i)/\k);\ss=sin(360*(2*\i+1)/\k);\sss=sin(360*2*(\i+1)/\k); \cc=cos(360*(2*\i+1)/\k); \c=cos(360*(2*\i)/\k); \ccc=cos(360*2*(\i+1)/\k);}
	
	\draw[fill=black!40]  (2.5*\cc,1.25*\ss+.55)--(5*\cc,2.5*\ss+.55)--(5*\ccc, 2.5*\sss)-- (2.5*\ccc,1.25*\sss)--cycle;
	\draw[fill=black!40]  (2.5*\c,1.25*\s)--(5*\c,2.5*\s)--(5*\cc, 2.5*\ss+.55)-- (2.5*\cc,1.25*\ss+.55)--cycle;
}

\end{tikzpicture}
		\caption{The idea behind the construction. The annulus $\{(r,\theta): a\leq 1\}$ is first unwrapped and then folded and scaled suitably, so that its projection fits the annulus $\{(r,\theta): ab\leq r\leq 1\}$. Then the folded rectangle is wrapped around that annulus as shown in the last figure.}\label{fig:folding}
	\end{figure}
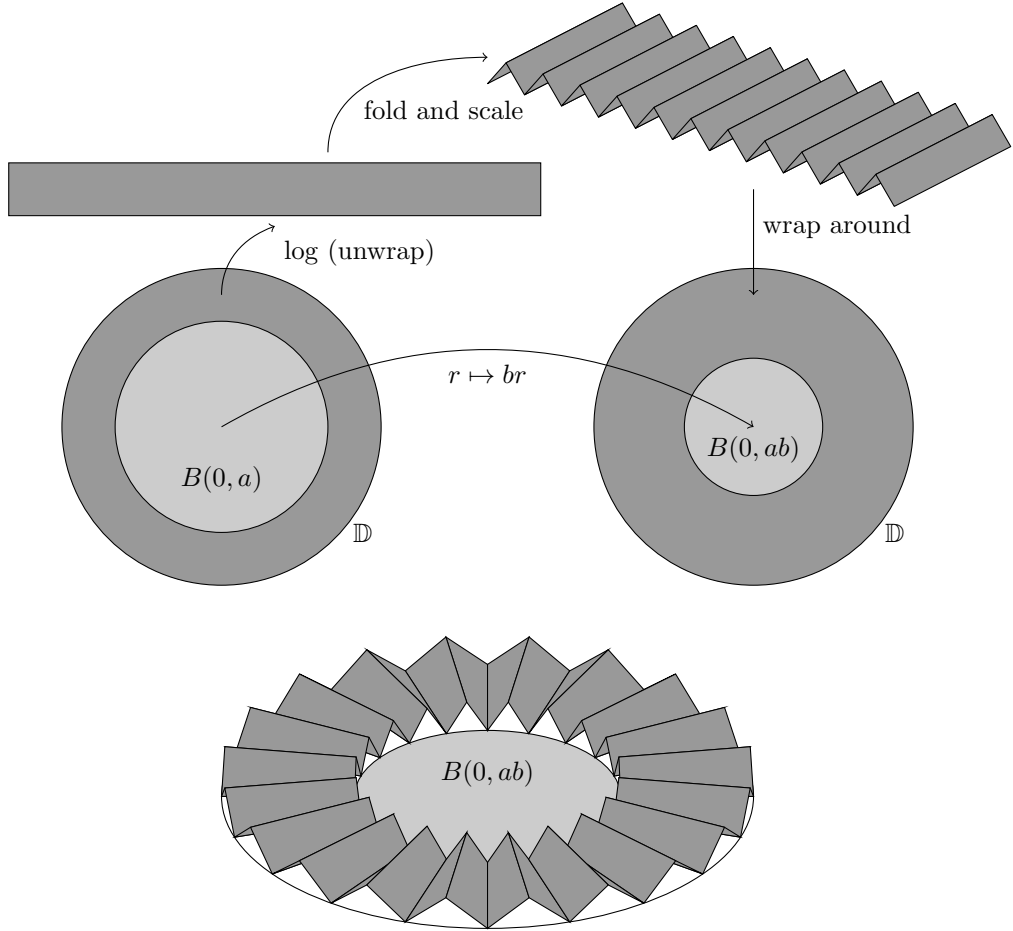
	
	We now give some intuitive explanation why it is possible to construct a mapping with these properties. Let us contrast this with the two-dimensional situation. This type of construction would not be possible if $F_k$ mapped $\br {\mathbb{D}}\times \{0\}$ into the plane instead of $\R^3$. This is because $F_k$ would map the annulus $\{(r,\theta): a \leq r \leq 1\}$ onto the thicker annulus $\{(r,\theta): ab \leq r \leq 1\}$, and it is well-known that any such mapping has dilatation $K \geq 1+\log b/\log a$ (see \cite[Theorem 39.1]{Vais:71}). It would then be impossible to maintain a uniform bound on dilatation in the composition scheme.
	
	By allowing $F_k$ to take values in $\R^3$, we can take advantage of extra space to work in. In rough terms, this is first achieved by cutting the annulus $\{(r,\theta): a \leq r \leq 1\}$ along a radial segment to obtain a rectangle. This rectangle is scaled and folded a certain number of times, and then wrapped back around the vertical axis so that its projection onto $\mathbb{R}^2 \times \{0\}$ is the annulus $\{(r,\theta): r <ab\}$. The parameter $k$ corresponds to the number of folds that are performed.  The folding procedure is a conformal map except along the edges of folding, while the wrapping procedure is close to conformal. By folding the rectangle more times, we may achieve that the wrapping procedure is arbitrarily close to conformal. See Figure \ref{fig:folding} for a sketch of this construction. 
	
	As just described, the mapping is not continuous on the circles $\{(r,\theta): r = a\}$ and $\{(r,\theta):r=1\}$, where we wish to have the identity map. Also, the wrapping procedure must be done more delicately than indicated in this overview to achieve almost conformality. These difficulties will be dealt with in the actual construction. In fact, we will give a precise formula for $F_k$ that is inspired by this folding idea; see Figure \ref{fig:map_f} for an illustration of the actual image of $\br \D\times\{0\}$ under our map $F_k$.

	So far, we have described the mapping on the disk $\br{\mathbb{D}} \times \{0\}$. The map can be extended to a quasiconformal homeomorphism of $T$ by using an orthogonal extension when close to $\br{\mathbb{D}} \times \{0\}$, away from the edges of folding, and then interpolating with the identity map on the boundary $\partial T$. An important point is that the maximum dilatation of this mapping does not depend on the number of the edges where the folding occurs. 
	
	In Section \ref{sec:inverse_absolute_continuity}, we verify the property of inverse absolute continuity for Theorem \ref{thm:main1}. The strategy is to use a probabilistic argument to show that the pointwise Lipschitz constant of the mapping at $x$ is zero for almost every point $x \in A$. We also give a modified argument that yields Theorem \ref{thm:main2}. 
	
	Several auxiliary lemmas from differential geometry and quasiconformal mapping theory are required in the proof and will be quoted when needed. We also use the following two facts about quasiconformal mappings. First, if $f_1, f_2\colon \mathbb{R}^n \rightarrow \mathbb{R}^n$ are, respectively, $K_1$- and $K_2$-quasiconformal, then $f_2 \circ f_1$ is $(K_1K_2)$-quasiconformal. This follows immediately from the equivalence of the analytic definition of quasiconformality with the geometric definition based on modulus of curve families. See, for instance, Theorem 34.6 in V\"ais\"al\"a's notes \cite{Vais:71}.
	
	The second fact is a theorem of V\"ais\"al\"a (\cite[Theorem 35.1]{Vais:71}) that quasiconformality need only be verified up to an exceptional set.  Here is a precise statement. We have included an analogous statement for Lipschitz mappings, which can be proved similarly.  Recall that, given $L >0$, a mapping $f\colon (X,d_X) \rightarrow (Y,d_Y)$ between metric spaces is {\it $L$-Lipschitz} if $d_Y(f(x),f(y)) \leq L d_X(x,y)$ for all $x,y\in X$.
	
	\begin{thm}[\cite{Vais:71}, Theorem 35.1] \label{thm:exceptional_set}
		Let $U,V\subset \R^n$ be open sets and let $E \subset U$ be a closed subset of $\sigma$-finite Hausdorff $(n-1)$-measure.
		\begin{enumerate}[\upshape{(i)}]
			\item  Suppose that $f\colon U \rightarrow V$ is a homeomorphism with the property that for all $x \notin E$, $f$ is $K$-quasiconformal when restricted to some neighborhood of $x$ (for some $K \geq 1$ independent of $x$). Then $f$ is $K$-quasiconformal. 
			\item[\upshape(ii)]  Suppose that $f\colon U \rightarrow V$ is a homeomorphism with the property that for all $x\notin E$, $f$ is $L$-Lipschitz when restricted to some neighborhood of $x$ (for some $L \geq 1$ independent of $x$). Then $f$ is locally $L$-Lipschitz on $U$. Furthermore, if $U$ is convex, then $f$ is $L$-Lipschitz on $U$.
		\end{enumerate}
		
	\end{thm}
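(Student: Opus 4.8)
Part (i) is precisely V\"ais\"al\"a's Theorem 35.1 \cite{Vais:71}, so in the write-up we would cite it; but here is the idea. Since membership in $W^{1,n}_{\loc}$ and the pointwise inequality \eqref{equ:analytic_definition} are local conditions, the hypothesis already shows that $f$ is $K$-quasiconformal on the open set $U\setminus E$, and the content is the removability of $E$. The crucial feature of $E$ is that, by the Eilenberg integralgeometric inequality applied to the orthogonal projection onto a coordinate hyperplane and to each piece of $E$ of finite $\mathcal H^{n-1}$-measure, $\mathcal H^{n-1}$-a.e.\ line parallel to a coordinate axis meets $E$ in an at most countable set. Since $f|_{U\setminus E}$ is $ACL^n$, and since a continuous function on an interval that is locally absolutely continuous off a countable closed set is absolutely continuous (a finite signed measure with no atoms carried by a countable set vanishes), one gets $f\in W^{1,n}_{\loc}(U)$; as $\mathcal H^n(E)=0$, the dilatation bound persists a.e.\ on $U$, and \eqref{equ:analytic_definition} gives the conclusion.

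The plan for part (ii) follows the same outline, with the $ACL$ ingredients replaced by elementary chaining. It suffices to prove that $|f(x)-f(y)|\le L|x-y|$ whenever the segment $[x,y]$ is contained in $U$: this yields $L$-Lipschitz continuity on every convex subset of $U$, hence the local statement (apply it on balls) and the statement for convex $U$. Fix such a segment. Exactly as in (i), for almost every sufficiently small vector $w$ the translate $[x+w,y+w]$ lies in $U$ and meets $E$ in a relatively closed, at most countable set; fix one such $w$, parametrize $[x+w,y+w]$ affinely by $\gamma\colon[0,1]\to U$, and put $g=f\circ\gamma$. Then $g$ is continuous, and since $f$ is locally $L$-Lipschitz on $U\setminus E$, the map $g$ is locally $(L|x-y|)$-Lipschitz on $[0,1]\setminus S$, where $S=\gamma^{-1}(E)$ is closed and countable.

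The heart of the matter is the lemma: \emph{if $g\colon[0,1]\to\R^m$ is continuous and locally $\Lambda$-Lipschitz off a countable closed set $S\subseteq[0,1]$, then $g$ is $\Lambda$-Lipschitz.} I would prove this by transfinite induction on the Cantor--Bendixson rank of $S$. When $S=\emptyset$ it reduces to the standard fact that a map which is locally $\Lambda$-Lipschitz on an interval is globally $\Lambda$-Lipschitz there, proved by chaining finitely many balls along the segment (no constant is lost, since the intermediate points may be taken on the segment). For the inductive step the decisive observation is that at an \emph{isolated} point $r$ of $S$ the two one-sided bounds on $(r-\eta,r)$ and $(r,r+\eta)$ glue across $r$ by continuity, again without losing the constant because $r$ lies between its neighbors; consequently $g$ is in fact locally $\Lambda$-Lipschitz off the derived set $S'$, whose rank is strictly smaller, and one concludes by the inductive hypothesis (the limit ordinal case using compactness of $S$, so that the derived sequence $S\supseteq S'\supseteq S''\supseteq\cdots$ cannot descend without hitting $\emptyset$). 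Granting the lemma we obtain $|f(x+w)-f(y+w)|\le L|x-y|$, and letting $w\to0$ and invoking continuity of $f$ finishes the proof.

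The main obstacle, and the place where $\sigma$-finiteness of $\mathcal H^{n-1}$-measure is used rather than mere $\mathcal H^n$-nullity, is the upgrade from ``$E$ is null'' to ``a.e.\ line meets $E$ in a countable set'' together with the lemma above: being locally Lipschitz --- indeed locally constant --- off a set of $\mathcal H^n$-measure zero does not force global Lipschitz continuity, as the homeomorphism $t\mapsto\tfrac12(t+c(t))$ built from the Cantor function $c$ shows on $[0,1]$. Thus countability of the exceptional slices, supplied by the integralgeometric inequality, is exactly what makes the chaining argument work, and the Cantor--Bendixson induction is the mechanism that turns it into a global bound. For part (i) the analogous subtlety is the Sobolev removability assertion --- that $f$ acquires no distributional derivative on $E$ --- handled by the same ``$ACL$ plus countable slices'' device.
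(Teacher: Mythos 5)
The paper offers no proof of this theorem: part (i) is quoted from V\"ais\"al\"a \cite[Theorem 35.1]{Vais:71}, and part (ii) is simply declared to follow ``similarly.'' Your proposal therefore goes beyond what the paper records, and the route you take for (ii) is a genuine argument of your own.

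Your proof of (ii) is correct. The reduction to segments $[x,y]\subset U$ handles at once the local statement and the convex case; the Eilenberg inequality gives translates of the segment whose trace on $E$ is closed and countable; and the key lemma --- a continuous $g\colon[0,1]\to\R^m$ that is locally $\Lambda$-Lipschitz off a closed countable set is globally $\Lambda$-Lipschitz --- is exactly where $\sigma$-finiteness of $\mathcal H^{n-1}(E)$ is spent. Your Cantor--Bendixson induction establishes the lemma cleanly: the successor step loses no constant because an isolated point of $S^{(\alpha)}$ lies \emph{between} its neighbors, so the two one-sided $\Lambda$-bounds add along the segment; the limit step uses that a point off $S^{(\lambda)}=\bigcap_{\alpha<\lambda}S^{(\alpha)}$ is already off some $S^{(\alpha)}$; and the derived sequence of a compact countable set reaches $\emptyset$ at a countable stage. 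Structurally this does parallel V\"ais\"al\"a's removability argument for (i), both resting on countability of the line-slices of $E$, so the paper's ``can be proved similarly'' is substantiated by your argument.

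One imprecision in your sketch of (i): the assertion that a continuous function on an interval, locally absolutely continuous off a closed countable set, must be absolutely continuous is false as stated --- $g(t)=t\sin(1/t)$, $g(0)=0$, with exceptional set $\{0\}$, is continuous and absolutely continuous on every $[\varepsilon,1]$ yet has infinite variation on $[0,1]$. The lemma needs the additional hypothesis that $g$ has bounded variation (equivalently, that its a.e.\ derivative is integrable). Under that hypothesis the distributional derivative is a finite measure whose singular part is atomless (by continuity of $g$) and concentrated on the countable set, hence zero --- which is what your parenthetical is gesturing at. In the quasiconformal setting the needed integrability comes from the $L^n$ bound on $\|Df\|$ along almost every line, so the argument does close; but the lemma should be stated with that hypothesis. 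Since (i) is a citation in any case, this does not affect the result.
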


	Throughout the paper, we use the notation $\simeq$ to denote comparability. That is, given quantities $x,y \geq 0$ depending on certain parameters, we write $x \simeq y$ if there exists a constant $C>0$ (independent of those parameters) such that $C^{-1}x \leq y \leq Cx$. Similarly, we write $x \lesssim y$ if $x \leq Cy$.  Also, we write $x \ll y$ to indicate that $x$ is very small relative to $y$. Finally, given some parameter $a$, we write $C=C(a)$ to indicate that the quantity $C$ is dependent only on $a$. In using this notation, we follow a convention that the value of $C$ may change from line to line, provided that it is in principle computable and only depends on $a$.
	
	\subsection*{Acknowledgments:}
	Part of the research was conducted while the first named author was visiting University of Jyv\"askyl\"a. He thanks the faculty and staff of the Department of Mathematics for their hospitality. The authors also thank Pekka Koskela for pointing out the reference \cite{HeiKos:94}, and Kai Rajala for his encouragement throughout this project.
	
	\section{Quasiconformal mappings between cylinders} \label{sec:qc_cylinder}
	
	Let $\mathbb{D}$ be the open unit disk in $\mathbb{R}^2$, and let $T$ be the cylinder $\br{ \mathbb{D}} \times [-2,2]$. The goal of this section is to construct a family of quasiconformal homeomorphisms $F_k\colon T \rightarrow T$, indexed by the parameter $k \in \mathbb{N}$, with the properties described in Proposition \ref{prop:basic_construction} below.
	
	We first establish some notation. Points in $\R^2$ are denoted by $(x,y)$ or in polar coordinates by $(r,\theta)$. We use $z$ to denote points in $\mathbb{R}^{3}$, with $(x,y,t)$ for rectangular coordinates and $(r,\theta,t)$ for cylindrical coordinates. Moreover, if $f$ is a mapping from a subset of $\mathbb{R}^2$ into $\mathbb{R}^3$, we use the notation $\widetilde f$ for its representation in polar coordinates (for the domain) and cylindrical coordinates (for the image). Likewise, if $f$ is a mapping between subsets of $\mathbb{R}^3$, we use $\widetilde{f}$ for its representation with respect to cylindrical coordinates. 
	
	The construction in this section also involves parameters $a,b \in (0,1)$. We specify the conditions these must satisfy in Section \ref{sec:inverse_absolute_continuity} below. For a fixed angle $\theta \in [0, 2\pi)$, let $\ell_\theta = \{re^{i\theta}: 0 \leq r \leq 1\}$. For a fixed radius $r \in (0,1)$, let $S_r = \{re^{i\theta}: 0 \leq \theta \leq 2\pi\}$. Also, for a given $k \in \mathbb{N}$, let 
	$$Z_k =  \partial \D \cup S_a \cup \bigcup_{j=0}^{2k-1} \ell_{\pi j/k}.$$
	
	The following proposition gives a precise statement of our construction. 
	
	\begin{prop} \label{prop:basic_construction}
		There exists, for each $k \in \mathbb{N}$, a quasiconformal mapping $F_k\colon T \rightarrow T$ such that
		\begin{enumerate}[\upshape(a)]
			\item $F_k$ is $K$-quasiconformal, for some $K=K(a,b)$ independent of $k$, \label{item:basic_construction(a)}
			\item $F_k|\partial T$ is the identity map, \label{item:basic_construction(b)}
			\item $(\D \setminus Z_k) \times \{0\}$ has an open neighborhood $V_k$ (in $\R^3$) contained in the interior of $T$ on which $F_k$ is $(1+\eta(k))$-quasi\-conformal, where $\eta(k) \rightarrow 0$ as $k \rightarrow \infty$, \label{item:basic_construction(c)}
			\item {restricted to $(B(0,a) \times \mathbb{R})\cap V_k $, the map $F_k$ is a conformal scaling by the factor $b<1$, that is, the point $z\in \R^3$ is mapped to $bz$,} and \label{item:basic_construction(d)}
			\item $F_k$ is $L$-Lipschitz, for some $L=L(a)$ depending only on $a\in (0,1)$ and independent of $k$. \label{item:basic_construction(e)}
		\end{enumerate}
	\end{prop}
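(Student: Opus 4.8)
The plan is to give an explicit formula for $F_k$ in cylindrical coordinates, built as a composition of a handful of elementary maps, and to read off the five properties factor by factor, using Theorem~\ref{thm:exceptional_set} to handle the creases produced by the folding. \emph{The map on the central slice.} First I would define $F_k$ on $\br\D\times\{0\}$. On $B(0,a)\times\{0\}$ put $F_k(z)=bz$, and on the closed annulus $\{a\le r\le 1\}\times\{0\}$ define $F_k$ as the composition of four maps: $(i)$ the conformal ``unwrapping'' $w\mapsto\log w$ (with a cut along $\ell_0$), which turns the annulus into a flat rectangle; $(ii)$ a piecewise isometric ``accordion fold'' that reflects successively across the images of the $2k$ radial segments $\ell_{\pi j/k}$; $(iii)$ a conformal rescaling; and $(iv)$ a ``wrapping'' map $W_k$ realizing the folded rectangle as an embedded pleated surface $\Sigma_k\subset\R^3$ that projects orthogonally onto the annulus $\{ab\le r\le 1\}$. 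The map $W_k$ is to be modelled on a conformal parametrization of a helicoidal ramp, with its $2k$ pleats arranged so that their amplitudes tend to $0$ like $1/k$ --- so that on each of the $2k$ panels $W_k$ differs from a genuinely conformal map only by a factor $1+\eta(k)$, with $\eta(k)\to 0$ and depending on $a,b$ --- and so that the pleats taper continuously to zero along $S_a$ and $\partial\D$. The taper makes $F_k$ continuous across those two circles, matching $z\mapsto bz$ along $S_a$ and the identity along $\partial\D$; this already gives (b) on $\partial\D\times\{0\}$ and (d) on the plane.

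\emph{Extension to the cylinder.} Next I would extend $F_k$ off the plane and check (b)--(d). On a thin tube of half-width $\delta$ around $\Sigma_k$, away from the images of the creases and of $S_a,\partial\D$, use the orthogonal (normal-bundle) extension, whose pointwise dilatation is $1+O(\delta\|\mathrm{II}_{\Sigma_k}\|)$ with $\|\mathrm{II}_{\Sigma_k}\|\lesssim 1/(ab)$; near the creases, where $\Sigma_k$ is only piecewise smooth and the normal extension is unavailable, and near $S_a,\partial\D$, use instead an explicit ``roof'' extension adapted to the local dihedral picture; and on the rest of $T$ interpolate radially with the identity on $\partial T$. All of this must be arranged so that $F_k$ is a homeomorphism of $T$, which is routine but essential. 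Then (b) and (d) hold by construction. Letting $V_k$ be the part of the tube lying over $(\D\setminus Z_k)\times\{0\}$ together with the near-plane portion of $B(0,a)\times\R$ unaffected by the interpolation, and taking $\delta=\delta(k)\le\eta(k)\,ab$, the only non-conformality on $V_k$ comes from $W_k$ and the thin orthogonal extension, each $(1+\eta(k))$-quasiconformal, which after enlarging $\eta$ gives (c).

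\emph{The uniform bounds.} On the middle region write $F_k=(\text{interpolation})\circ(\text{extension})\circ W_k\circ(\text{rescaling})\circ(\text{fold})\circ(\text{unwrapping})$. The unwrapping, rescaling and fold are conformal (the fold being a local isometry), $W_k$ is $(1+\eta(k))$-quasiconformal, and the extension and interpolation satisfy dilatation bounds that are uniform in $k$ --- in fact uniform over $\delta\in(0,ab]$, since there the orthogonal extension has dilatation $\le 2$ and the interpolation region has $a$-dependent but $k$-independent geometry. Multiplicativity of dilatations then makes $F_k$ $K(a,b)$-quasiconformal in a neighborhood of every point of $\inter(T)\setminus(Z_k\times(-2,2))$; since $Z_k\times(-2,2)$ is closed in $\inter(T)$ and has $\sigma$-finite Hausdorff $2$-measure, Theorem~\ref{thm:exceptional_set}(i) upgrades this to global $K(a,b)$-quasiconformality, giving (a). For (e): the map $z\mapsto bz$ is $1$-Lipschitz since $b<1$; the fold is an isometry; the unwrapping and rescaling compose to radial and angular maps of speed bounded by a constant depending only on $a$ (the parameter $b$ enters only through factors $\le 1$); $W_k$ and the extension are $\lesssim 1$-Lipschitz; and the interpolation is $L(a)$-Lipschitz because $|F_k(z)-z|$ is bounded and is spread over a region of $a$-dependent, $k$-independent size. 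Theorem~\ref{thm:exceptional_set}(ii), applied on a finite cover of $T$ by convex sets, then yields the uniform bound $L=L(a)$.

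\emph{The main obstacle.} I expect the hard part to be the construction and analysis of the wrapping map $W_k$: it must embed the folded rectangle as a non-self-intersecting pleated surface over $\{ab\le r\le 1\}$ that is simultaneously $(1+\eta(k))$-quasiconformal on each panel with $\eta(k)\to 0$ uniformly in $k$, has its pleats tapering to zero along $S_a$ and $\partial\D$ so that the gluings to $z\mapsto bz$ and to the identity are continuous while the dilatation stays bounded there, and has second fundamental form $\lesssim 1/(ab)$ so that the orthogonal extension is controlled. A secondary difficulty is designing the ``roof'' extension across the creases and the final interpolation so that neither the bound $K(a,b)$ nor the bound $L(a)$ deteriorates as $k\to\infty$; this is precisely where Theorem~\ref{thm:exceptional_set}, including its convexity clause, is needed.
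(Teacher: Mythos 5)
Your proposal is a faithful transcription of the paper's heuristic overview (Figure~1), but it leaves the crux of the matter --- the wrapping map $W_k$ --- as an unconstructed black box, and you say so yourself in the final paragraph. That is a genuine gap, not a postponed detail: all three of the quantitative properties \eqref{item:basic_construction(a)}, \eqref{item:basic_construction(c)}, \eqref{item:basic_construction(e)} hinge on $W_k$, and nothing in the proposal specifies the one numerical relationship that makes the whole thing work. Concretely, if the surface over the wedge $\{0\le\theta\le\pi/k\}$ is parametrized in cylindrical coordinates as $(r,\theta)\mapsto(g(r),\theta,rh(r)\theta)$, then it is exactly conformal along the wedge axis $\theta=0$ precisely when
\[
h(r)=\sqrt{g'(r)^2-g(r)^2/r^2},
\]
and this identity is what, via smoothness and the shrinking aperture $\pi/k$, produces the $(1+\eta(k))$-quasiconformality of property \eqref{item:basic_construction(c)}. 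Saying $W_k$ should be ``modelled on a conformal parametrization of a helicoidal ramp'' with pleat amplitude $\sim 1/k$ does not determine this balance between the radial stretch $g'$, the tangential stretch $g/r$, and the slope $h$, nor does it guarantee that the resulting $g$ can be made smooth with $h\ge 0$ (the paper needs a nontrivial lemma, Lemma~\ref{lemma:g}, to produce such a $g$ with $h$ smooth). Without this, you cannot verify (a), (c), or (e) --- only hope they hold.

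The paper also takes a structurally different route that sidesteps several of your secondary difficulties. Rather than literally performing unwrap $\to$ fold $\to$ rescale $\to$ wrap, it writes a single smooth explicit formula $G$ on one wedge $D_k$, and obtains the pleated surface by reflecting across the rays $\ell_{\pi j/k}$; the creases then live only on $Z_k\times(-2,2)$ and are dispatched once and for all by Theorem~\ref{thm:exceptional_set}. Because $G$ is smooth on a full neighborhood $\Omega_k$ of $D_k$, no ``roof'' extension near creases is ever needed --- a piece of machinery your proposal would have to invent and estimate. Likewise, the paper does not use a radial interpolation to $\partial T$; it partitions $T_k$ into $E,S_1,S_1',S_2,S_2'$ using a smooth bump function $\varphi$ (Lemma~\ref{lemma:bump function}), extends $G$ normally on $E$, and interpolates along \emph{vertical} lines on $S_1,S_2$, controlling the dilatation there by a stability lemma (Lemma~\ref{lemma:stability}) rather than by direct estimates. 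Your high-level architecture (normal-bundle tube, interpolation to the identity, exceptional-set theorem, convexity for the Lipschitz bound) is sound and parallels the paper, but as written the proposal identifies the hard step rather than carrying it out, and it misses the reflection trick and the exact conformality condition that make the hard step tractable.
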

	
	In parts \eqref{item:basic_construction(d)} and \eqref{item:basic_construction(e)}, the notation $B(0,a)$ refers to the ball in $\mathbb{R}^2$. To simplify our notation, we write $F$ in place of $F_k$ for the remainder of this section. 
	
	\begin{rem}
		Note that the domain of a quasiconformal map is, by definition, an open subset of $\R^n$. As a general rule, by specifying a closed set $E$ as the domain of a quasiconformal mapping, it is implied that $F$ can be extended to a quasiconformal homeomorphism between $U$ and $V$, where $U$ is an open neighborhood of $E$ and $V$ is an open neighborhood of $F(E)$. In the case of Proposition \ref{prop:basic_construction}, we can extend $F$ to a quasiconformal homeomorphism of $\mathbb{R}^3$ by defining it as the identity map outside of $T$. 
	\end{rem}
	
	We proceed now with the proof of Proposition \ref{prop:basic_construction}, which will occupy the remainder of this section. Fix a sufficiently large $k \in \mathbb{N}$. Let $D_k = \{(r, \theta): r \leq 1, 0 \leq \theta \leq \pi/k\}$, in cylindrical coordinates, and let $T_k = D_k \times [-2,2]$.
	We will define the mapping $F$ first on $T_k$, and then on all of $T$ using symmetry as we now explain. More specifically, $F$ will map $T_k$ onto itself, taking the sets $\ell_{\pi/k} \times [-2,2]$, $\ell_0 \times [-2,2]$ onto themselves and acting as the identity map on $\partial T_k \setminus (\ell_{\pi/k} \times [-2,2]\cup \ell_0\times [-2,2])$.  
	Once $F$ has been defined in $T_k$ we can then extend $F$ to the set $\{(r, \theta,t) \in T: \pi/k \leq \theta \leq 2\pi/k\}$ by reflection across $\ell_{\pi/k} \times [-2,2]$. Namely, if $\widetilde F(r,\theta,t)=(\mathcal R(r,\theta,t), \Theta(r,\theta,t), \mathcal T(r,\theta,t)) $ is the representation of $F$ in cylindrical coordinates, then we define
	\begin{align*}
	\widetilde F(r,\theta,t)= (\mathcal R(r,2\pi/k-\theta,t), 2\pi/k - \Theta(r, 2\pi/k- \theta,t), \mathcal T(r,2\pi/k-\theta,t))
	\end{align*}
	in cylindrical coordinates, for $\pi/k\leq \theta\leq 2\pi/k$. Finally, we define $F$ on the other wedges $\{(r, \theta): r \leq 1, 2\pi (l-1)/k \leq \theta \leq 2\pi l/k\}\times [-2,2]$, where $1\leq l\leq k$, by 
	\begin{align*}
	\widetilde F(r,\theta,t)= \widetilde F(r, \theta - 2\pi(l-1)/k,t) + (0,2\pi(l-1)/k,0). 
	\end{align*}
	Observe that the definition of $F$ is consistent on the boundary of the wedges, so that $F$ is indeed a homeomorphism of $T$. As previously noted, $F$ will act as the identity map on the boundary $\partial T$. Moreover, Theorem \ref{thm:exceptional_set} implies that $F$ is quasiconformal on $T$ with the same dilatation as on $T_k$ and also Lipschitz on $T$ with the same constant as on $T_k$. It is immediate that Proposition \ref{prop:basic_construction} will follow once we have shown that $F$ satisfies properties \eqref{item:basic_construction(a)}-\eqref{item:basic_construction(e)} when restricted to the set $T_k$.

	What remains then is to construct the map $F$ on the set $T_k$.	We introduce the following five subsets of $T_k$. For simplicity, we drop $k$ from the notation:
	\begin{enumerate}[\upshape(i)]
		\item $S_2 = \{ z \in T_k: 1\leq t \leq 2\}$
		\item $S_1 = \{ z \in T_k: \varphi(x,y) \leq t \leq 1\}$
		\item  $E = \{ z \in T_k: |t| \leq  \varphi(x,y)\}$
		\item  $S_1' = \{ z \in T_k:  -1\leq t \leq -\varphi(x,y)\}$
		\item  $S_2' = \{ z \in T_k: -2 \leq t\leq -1 \}$.
	\end{enumerate}
	In the above notation, $\varphi\colon \R^2 \rightarrow [0,1]$ is a certain non-negative smooth function with $\partial D_k=\varphi^{-1}(0)$ and $|\varphi|\ll 1$. We will later choose this function carefully in Section \ref{sec:S_1} and it will depend on $k$; see also Lemma \ref{lemma:bump function} and the discussion following it.
	\subsection{Definition of \texorpdfstring{$F$}{F} on the set \texorpdfstring{$D_k \times \{0\}$}{Dk x {0}}} 
	We first define an auxiliary function $g\colon [0,\infty) \rightarrow [0,\infty)$ specifying how $F|D_k \times \{0\}$ affects the radius of a given point. Its definition depends on the parameters $a,b \in (0,1)$ from Proposition \ref{prop:basic_construction} above. 
	
	\begin{lemm}\label{lemma:g}	
		For each $a,b\in (0,1)$ there exists a smooth, strictly increasing function $g_{a,b}=g\colon [0,\infty) \rightarrow [0,\infty)$  such that 
		\begin{enumerate}[\upshape(i)]
			\item $g(r) = br$ for all $r \in [0,a]$ and $g(r) = r$ for $r\geq 1$, \label{item:g(a)}
			\item $b\leq g'(r) \leq L$ for some constant $L=L(a)>0$ for $r\in [0,\infty)$,\label{item:g(b)}
			\item $g'(r) \geq g(r)/r$ for $r\in [0,\infty)$, and \label{item:g(c)}
			\item the function $h(r)= \sqrt{g'(r)^2- g(r)^2/r^2}$ is smooth for $r\in [0,\infty)$. \label{item:g(d)}
		\end{enumerate}
	\end{lemm}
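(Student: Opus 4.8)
The plan is to reduce everything to a single scalar function, the radial scaling ratio $u(r):=g(r)/r$, and to build $g$ by prescribing $u$. Writing $g(r)=r\,u(r)$, one computes $g'(r)=u(r)+r\,u'(r)$ and
\[
g'(r)^2-\frac{g(r)^2}{r^2}=\bigl(u(r)+r\,u'(r)\bigr)^2-u(r)^2=r\,u'(r)\bigl(2u(r)+r\,u'(r)\bigr).
\]
In these terms the requirements become transparent: condition \eqref{item:g(a)} is equivalent to $u\equiv b$ on $[0,a]$ and $u\equiv 1$ on $[1,\infty)$; condition \eqref{item:g(c)} says precisely that $u$ is non-decreasing; and, since $2u+ru'$ is a strictly positive smooth function (bounded below by $2b$) as soon as $u\ge b$ and $u'\ge 0$, condition \eqref{item:g(d)} reduces to asking that $r\,u'(r)$ — equivalently, since $u'$ will be supported away from the origin, that $u'$ itself — admit a smooth square root. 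I expect this last point to be the one genuinely non-obvious step.

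The natural way to arrange it is to take $u'$ to be literally a perfect square. I would fix once and for all a non-negative smooth function $\psi_0$ with $\operatorname{supp}\psi_0\subset(0,1)$ and $\psi_0\not\equiv 0$, set $\psi(r):=\psi_0\!\left(\frac{r-a}{1-a}\right)$ so that $\operatorname{supp}\psi\subset(a,1)$, put
\[
\lambda=\lambda(a,b):=\left(\frac{1-b}{\int_0^\infty\psi(s)^2\,ds}\right)^{1/2},\qquad u(r):=b+\lambda^2\int_0^r\psi(s)^2\,ds,
\]
and then define $g(r):=r\,u(r)$. With this choice $u$ is smooth and non-decreasing, $u\equiv b$ on $[0,a]$, and $u\equiv 1$ on $[1,\infty)$ because $\lambda^2\int_0^\infty\psi^2=1-b$; hence $g$ is smooth, $g(r)=br$ on $[0,a]$, $g(r)=r$ on $[1,\infty)$, which is \eqref{item:g(a)}, and $g$ is strictly increasing since $g'=u+ru'\ge b>0$. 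Property \eqref{item:g(c)} is immediate from $u'=\lambda^2\psi^2\ge 0$. For \eqref{item:g(d)} I would observe that on $(0,\infty)$ one has the factorization $h(r)=\sqrt{r}\,\lambda\,\psi(r)\,\sqrt{2u(r)+r\,u'(r)}$, a product of functions that are smooth on $(0,\infty)$, while $h$ vanishes identically on $[0,a]$; hence $h$ is smooth on all of $[0,\infty)$.

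What is left is the two-sided gradient bound \eqref{item:g(b)}. The lower bound $g'(r)=u(r)+r\,u'(r)\ge u(r)\ge b$ is immediate. For the upper bound, since $\operatorname{supp}u'\subset(a,1)$ we have $r<1$ whenever $u'(r)\ne 0$, so $r\,u'(r)\le u'(r)=\lambda^2\psi(r)^2\le\lambda^2\|\psi_0\|_\infty^2$, giving $g'(r)\le 1+\lambda^2\|\psi_0\|_\infty^2$. The only thing needing a word is that this is independent of $b$, which holds because
\[
\lambda^2=\frac{1-b}{(1-a)\int_0^1\psi_0(t)^2\,dt}\le\frac{1}{(1-a)\int_0^1\psi_0(t)^2\,dt},
\]
so one may take $L=L(a):=1+\|\psi_0\|_\infty^2\big/\bigl((1-a)\int_0^1\psi_0^2\bigr)$, depending only on $a$ through the fixed bump $\psi_0$. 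Thus the essential content of the lemma is the reduction in the first paragraph together with the perfect-square trick $u'=(\lambda\psi)^2$; everything else is bookkeeping, the only mild care being to fix $\psi_0$ before choosing $\lambda$ so that the mass constraint $\int u'=1-b\le1$ keeps $L$ bounded as $b\to 0$.
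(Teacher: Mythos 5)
Your argument is correct and takes a genuinely different, and in fact cleaner, route than the paper. The paper picks a specific explicit $g$ built from the classical transition function $\rho(x)=\sigma(x)/(\sigma(x)+\sigma(1-x))$, $\sigma(x)=e^{-1/x}\chi_{(0,\infty)}(x)$; items \eqref{item:g(a)}--\eqref{item:g(c)} then fall out by direct computation, but item \eqref{item:g(d)} requires showing that $\sqrt{\rho'}$ is smooth at $x=0$ and $x=1$, which the paper does by an inductive asymptotic analysis of the derivatives of $e^{-1/(2x)}\omega(x)$ near $0$ — a non-trivial, if standard, calculation. You instead change variables to $u=g/r$, notice that $g'^2-g^2/r^2 = ru'(2u+ru')$ with $2u+ru'\geq 2b>0$, and then, since the real obstruction to smoothness of $h$ is the factor $u'$, simply build $u'$ as a perfect square $u'=\lambda^2\psi^2$. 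This makes the factorization $h=\sqrt{r}\,\lambda\psi\,\sqrt{2u+ru'}$ manifestly smooth on $(0,\infty)$ (and $h\equiv 0$ on $[0,a]$), so item \eqref{item:g(d)} is immediate with no asymptotics at all. The trade-off is that your $g$ is implicit (defined through an integral and a normalization constant $\lambda$), whereas the paper's $g$ is written in closed form; but your approach isolates and resolves the one genuine difficulty with a single structural choice. Your treatment of \eqref{item:g(b)}, including the observation that $\lambda^2\leq 1/((1-a)\int\psi_0^2)$ is needed so that $L$ does not blow up as $b\to 0$, is also correct and complete.
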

	
	\begin{proof}
		The function $g$ can be defined as follows. Let $\rho\colon \R\to [0,1]$ be defined by 
		$$\rho(x)= \frac{\sigma(x)}{\sigma(x)+\sigma(1-x)},$$
		where $\sigma(x)= e^{-1/x} \x_{(0,\infty)}(x)$. Observe that $\rho$ is an increasing smooth function satisfying $\rho(x)=0$ for all $x\leq 0$, $\rho(x)=1$ for all $x\geq 1$, and $\rho'(x)>0$ for all $x \in (0,1)$.  Then let
		\begin{align*}
		g(r) =  (1-b) \rho \left( \frac{r-a}{1-a} \right) r+br.
		\end{align*}
		It is immediate that $br\leq g(r)\leq r$ and that $g$ has property \eqref{item:g(a)} by construction. Next, a computation yields 
		\begin{align}\label{lemma:g:derivative}
		g'(r)= \frac{1-b}{1-a} \rho'\left( \frac{r-a}{1-a} \right)r  + \frac{g(r)}{r}>0.
		\end{align}
		Since $\rho'\geq 0$, this establishes property \eqref{item:g(c)}. Since $b \leq g(r)/r$, this also verifies the first inequality in \eqref{item:g(b)}. The other inequality in \eqref{item:g(b)} follows from
		\begin{align*}
		g'(r)\leq \frac{1-b}{1-a}\|\rho'\|_\infty \x_{(a,1)}(r)+1 \leq \frac{1}{1-a} \|\rho'\|_\infty +1 \eqqcolon L,
		\end{align*}
		using the fact that $a, b<1$.  
		
		Finally, we verify \eqref{item:g(d)}. Note that the function $h$ vanishes for all  $r\in [0,a]$, so we may assume that $r \geq a$.  Write $h$ in the form 
		\begin{align*}
		h(r)= \sqrt{g'(r)-g(r)/r} \cdot \sqrt{g'(r)+g(r)/r},
		\end{align*}
		where the function $g'(r)+g(r)/r$ is non-vanishing. It follows that $\sqrt{g'(r)+g(r)/r}$ is smooth.  Next, we wish to show that
		$$\sqrt{g'(r)-g(r)/r} = \sqrt{\frac{1-b}{1-a}\rho'\left(\frac{r-a}{1-a}\right)r}$$ 
		is smooth.  Since $r \geq a$, the function $\sqrt{r}$ is smooth and it suffices to verify the smoothness of $\sqrt{\rho' \left( \frac{r-a}{1-a}\right)}$ at $r=a$ and at $r=1$.
		
		By a change of coordinates, this reduces to showing that $\sqrt{\rho'(x)}$ is smooth at $x=0$ and at $x=1$. Observe that $\rho(x)+\rho(1-x)=1$, so $\rho'(x)=-\rho'(1-x)$. Hence  we need only consider the case $x=0$.
		
		Note that $\rho(x)= \rho_0(x)e^{-1/x}\x_{(0,\infty)}(x)$,  where the function $$\rho_0(x) = (e^{-1/x}\x_{(0,\infty)}(x) + e^{-1/(1-x)}\x_{(0,\infty)}(1-x))^{-1}$$ is smooth. Taking derivatives gives $\rho'(x)=(\rho_0'(x)+\rho_0(x)/x^2)e^{-1/x}\x_{(0,\infty)}(x)$. The function $ \rho_0'(x)+\rho_0(x)/x^2$ is non-negative (since $\rho'\geq 0$) and each of its derivatives is $O(1+1/x+\dots+1/x^m)$ for some $m\in \N$ on an interval $(0,\delta)$, by the smoothness of $\rho_0$ in neighborhood of $0$. The same is true for the function $\omega_0(x) = \sqrt{ \rho_0'(x)+\rho_0(x)/x^2}$. 
		
		Let $\psi(x) =\sqrt{\rho'(x)}= e^{-1/(2x)}{\omega_0(x)}\x_{(0,\infty)}$. This function is continuous at $0$. Inductively one can show that  the $q$-th order derivatives $\psi^{(q)}$ satisfy $\psi^{(q)}(x)= e^{-1/(2x)} \omega_q(x)\x_{(0,\infty)}$ for some smooth function $\omega_q(x)= O(1+1/x+\dots+1/x^{m})$ near $0$,  for some $m\in \N$ depending on $q$. This shows that $\psi^{(q)}$ is continuous at $0$ for all $q\in \N\cup \{0\}$. 
	\end{proof}

	We now define $F|D_k \times \{0\}$ to be the mapping $G\colon D_k \rightarrow \mathbb{R}^3$ given by 
	\begin{align*}
	{G}(x,y) = \left( \frac{g(r) x}{r} , \frac{g(r) y}{r}, r  h(r)\theta\right)
	\end{align*}
	in rectangular coordinates, where $r=\sqrt{x^2+y^2}$ and $\theta=\arctan(y/x)\in (-\pi/2,\pi/2)$. Note that $G$ extends smoothly to a neighborhood of $ D_k$ because $h(r)=0$  for all $r<a$. We will denote by $\Omega_k$ the open $1/k$-neighborhood of $D_k$ in $\R^2$. 
	
	\begin{figure}
		\centering
		\includegraphics[scale=.5]{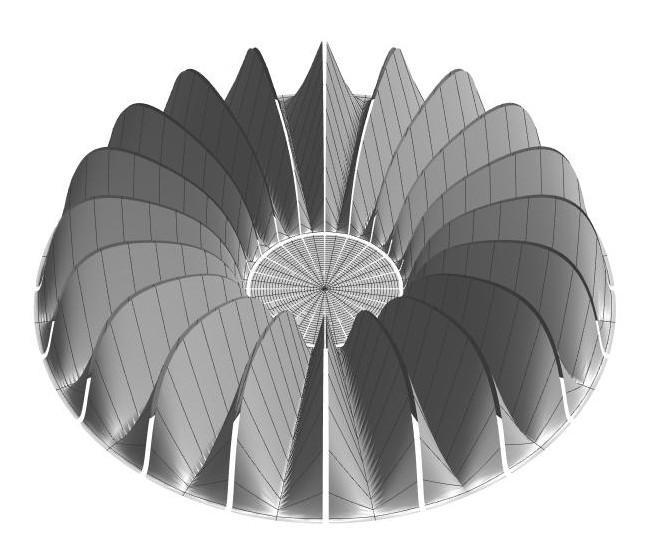}
		\caption{The parametric surface defined by $G$ after periodic extension.}\label{fig:map_f}
	\end{figure}		
	
	\begin{lemm}\label{lemma:properties of f}
		The function $G$ has the following properties:
		\begin{enumerate}[\upshape(i)]
			\item $G$ is a smooth embedding of $\Omega_k$ into $\R^3$, for large enough $k\in \N$. \label{item:properties_f(a)}
			\item $G$ is $(1+\eta(k))$-quasiconformal in $\Omega_k$, with $\eta(k)\to 0$ as $k\to\infty$. More specifically, the tangent vectors $G_x$ and $G_y$ have magnitude $g'(r)+o(1)$ and their angle is $\pi/2 +o(1)$ as $k\to\infty$. \label{item:properties_f(b)}
			\item The partial derivatives of $G$ are bounded in $\Omega_k$ by some constant $L=L(a)>0$, independent of $k$, provided that $k\in \N$ is sufficiently large. \label{item:properties_f(c)}
			\item The set $\{G(x,y): (x,y)\in  \Omega_k\}\subset \R^3$ is the graph of a function of two variables for all $k\in \N$. Moreover, for sufficiently large $k\in \N$ the map $f$ takes $\ell_{\pi/k}$ and $\ell_0$ into $\ell_{\pi/k}\times [-1/2,1/2]$ and $\ell_0\times [-1/2,1/2]$, respectively. Finally, it maps the arc $\{(1,\theta): 0\leq \theta\leq \pi/k\}$ onto the arc $\{(1,\theta,0):0\leq \theta\leq \pi/k\}$ acting as the identity map. In particular, $G(D_k)$ is contained in $T_k$. \label{item:properties_f(d)}
		\end{enumerate}
	\end{lemm}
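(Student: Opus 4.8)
The plan is to deduce all four properties from an explicit computation of the differential $DG$, the key structural point being that $g$ and $h$ depend only on $a$ and $b$, not on $k$, so that the sole $k$-dependence in $G$ enters through the angular variable $\theta=\arctan(y/x)$, which has size $O(1/k)$ on the part of $\Omega_k$ where $h$ does not vanish. To compute $DG$ I would pass to the cylindrical representation $\widetilde G(r,\theta)=(g(r),\theta,rh(r)\theta)$ and apply the chain rule; writing $\partial_rG$, $\partial_\theta G$ for the coordinate tangent vectors of this parametrization, one gets
\[
\partial_rG=\big(g'\cos\theta,\;g'\sin\theta,\;(h+rh')\theta\big),\qquad\partial_\theta G=\big({-}g\sin\theta,\;g\cos\theta,\;rh\big),
\]
and then $G_x=\cos\theta\,\partial_rG-(\sin\theta/r)\,\partial_\theta G$, $G_y=\sin\theta\,\partial_rG+(\cos\theta/r)\,\partial_\theta G$. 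Two algebraic identities drive the estimates: $|\partial_\theta G|^2=g^2+r^2h^2=r^2(g')^2$, by the defining relation $h^2=(g')^2-g^2/r^2$ of Lemma~\ref{lemma:g}, and $|\partial_rG\wedge\partial_\theta G|^2=r^2(g')^4+g^2(h+rh')^2\theta^2\ge r^2(g')^4>0$ for $r>0$.

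For part~(i) and the graph assertion of part~(iv): smoothness of $G$ on $\Omega_k$ follows from Lemma~\ref{lemma:g}, because $h$ vanishes on $[0,a]$, so on $\{r<a\}$ the formula for $G$ reduces to the linear map $z\mapsto bz$ (disposing of the apparent singularity of $\theta$ at the origin), while on $\{r\ge a\}$ one has $x>0$ for $k$ large so that $\theta$ is smooth there, and the two descriptions agree across $\{r=a\}$. For the immersion and embedding claims I would observe that the projection $\Pi\circ G\colon(x,y)\mapsto(g(r)/r)(x,y)$ fixes each ray from the origin and changes the radius according to $r\mapsto g(r)$, hence, $g$ being strictly increasing, is a diffeomorphism of $\Omega_k$ onto an open planar set; combined with the nonvanishing of $|\partial_rG\wedge\partial_\theta G|$ for $r>0$ and with $DG=bI$ near the origin, this shows $G$ is an immersion, and moreover $G=(\Pi\circ G,\,rh(r)\theta)$ exhibits the image of $G$ as the graph of the function $rh(r)\theta$ of its first two coordinates, so $G$ is an embedding.

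The remaining assertions of part~(iv) are read off $\widetilde G(r,\theta)=(g(r),\theta,rh(r)\theta)$: the angular coordinate is unchanged and $g(r)\le r\le1$, so a radial segment $\ell_\theta$ maps into $\ell_\theta\times\R$ with height $rh(r)\theta$ satisfying $|rh(r)\theta|\le\pi L(a)/k\le\tfrac12$ once $k$ is large, which gives both $G(D_k)\subset T_k$ and the claims about $\ell_0$ and $\ell_{\pi/k}$; and on the arc $r=1$ we have $g(1)=1$ and $h(1)=0$ (since $g(r)=r$ for $r\ge1$), so $\widetilde G(1,\theta)=(1,\theta,0)$ and $G$ acts there as the identity.

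Finally, for parts~(ii) and~(iii) I would substitute $\theta=O(1/k)$, $\cos\theta=1+O(1/k^2)$, $\sin\theta=O(1/k)$ into the formulas above, using $b\le g'\le L(a)$ and $0\le h\le g'\le L(a)$ from Lemma~\ref{lemma:g} and the boundedness of $h'$ (since $h$ is smooth with support in $[a,1]$). This gives $|G_x|^2=(g')^2+O(1/k^2)$, $|G_y|^2=(g')^2+O(1/k^2)$ and $\langle G_x,G_y\rangle=O(1/k)$; as $g'\ge b>0$, the Gram matrix of $(G_x,G_y)$ equals $(g')^2I+o(1)$, so $|G_x|=|G_y|=g'(r)+o(1)$, the angle between $G_x$ and $G_y$ is $\pi/2+o(1)$, and the singular values of $DG$ have ratio $1+\eta(k)$ with $\eta(k)=O(1/k)\to0$, which is~(ii). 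For~(iii), each entry of $DG$ is dominated by a sum of $g'$, $g/r\le1$, $h$ and $|h'\theta|$, and the last term is $\le1$ once $k$ is large, so the bound depends only on $a$. The one genuine difficulty here is precisely this uniformity — keeping the constants in~(ii) and~(iii) independent of $k$, and dependent only on $a$ in~(iii) — and it works because the entire $b$-dependent part of $DG$, namely the contributions of $(h+rh')\theta$ and $rh$, carries a factor $\theta=O(1/k)$ and so vanishes in the limit $k\to\infty$; all of the delicate smoothness has already been packaged into Lemma~\ref{lemma:g}.
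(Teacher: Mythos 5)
Your argument is correct and follows essentially the same route as the paper: compute $DG$, exploit $g'^2 = g^2/r^2 + h^2$ together with $\theta = O(1/k)$ on $\Omega_k$ to get near-conformality, deduce injectivity from polar-angle preservation plus $g$ strictly increasing, and read off part (iv) from $g(1)=1$ and $h(1)=0$. Your passage through the polar tangent vectors $\partial_r G$, $\partial_\theta G$ and the identity $|\partial_\theta G|^2 = r^2(g')^2$ makes the $O(1/k)$ bookkeeping more quantitative than the paper's qualitative ``by smoothness'' step, but this is a stylistic refinement of the same method rather than a genuinely different one.
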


	\begin{rem}Here the quasiconformality of a smooth embedding $G$ is to be understood as follows. The differential matrix $DG(x,y)$ is required to be a full-rank $3\times 2$ matrix, so the image of $DG(x,y)$ is a plane in $\R^3$. Restricting to that plane (or composing with a rotation of $\R^3$), we may regard $DG(x,y)$ as a $2\times 2$ non-singular matrix $A$. We say that $G$ is $K$-quasiconformal at $(x,y)$ if $\|A\|^2\leq K \det(A)$, that is, if \textit{the matrix $A$ is $K$-quasiconformal}. For example, if 
	$$D{G}(x,y) = \begin{pmatrix} a & 0 \\ 0 & b \\ 0 & c \end{pmatrix},$$
	then the image of $DG(x,y)$ is the plane spanned by the vectors $(a,0,0)$ and $(0,b,c)$. In this case, the matrix $DG(x,y)$ is conformal if it has full rank and $a^2=b^2+c^2$.
	\end{rem}
	
	We need a criterion to decide whether a map is $(1+\eta)$-quasiconformal for a small $\eta>0$. The following lemma provides such a criterion:
	\begin{lemm}\label{lemma:matrix qc}
		Let $A\in M_n(\R)$ be a non-singular matrix with $\det(A)>0$, and consider a positive number $R>0$. Denote the columns of $A$ by $A_1,\dots,A_n$. Then for each $\eta>0$, there exists $\delta>0$ such that the following statement holds: if $||A_i|-R|<\delta$ and $|\langle A_i,A_j\rangle| < \delta$ for all $i,j\in \{1,\dots,n\}$, $j\neq i$, then $A$ is $(1+\eta)$-quasiconformal, i.e., $\|A\|^n\leq (1+\eta) \det(A)$.
	\end{lemm}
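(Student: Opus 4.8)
The plan is to reduce the statement to a comparison between the eigenvalues of the Gram matrix $A^{T}A$ and those of the identity. First I would normalize the scale: passing from $A$ to $B = R^{-1}A$ changes neither the ratio $\|A\|^{n}/\det(A)$ nor the sign of the determinant, and turns the hypotheses into $\big||B_i|-1\big| < \delta/R$ and $|\langle B_i,B_j\rangle| < \delta/R^{2}$. Hence it suffices to treat the case $R = 1$, at the cost of shrinking $\delta$ by a factor depending only on $R$.

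With $R=1$, form $G = A^{T}A$, whose $(i,i)$ entry is $|A_i|^{2}$ and whose $(i,j)$ entry for $i\neq j$ is $\langle A_i,A_j\rangle$. Write $G = I + E$. The hypotheses give $\big||A_i|^{2}-1\big| < 2\delta + \delta^{2}$ and $|E_{ij}| < \delta$ for $i \neq j$, so the Frobenius norm of $E$, and therefore its operator norm $\|E\|$, is bounded by a constant depending only on $n$ times $\delta$; in particular $\|E\| \to 0$ as $\delta \to 0$. Since $G$ is symmetric, $\|G - I\| = \|E\|$, so every eigenvalue of $G$ lies in the interval $[1 - \|E\|, 1 + \|E\|]$.

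Now the conclusion follows from two standard identities. The quantity $\|A\|^{2}$ is the largest eigenvalue of $G$, so $\|A\|^{2} \le 1 + \|E\|$. And $\det(A)^{2} = \det(G)$ equals the product of the eigenvalues of $G$, so $\det(G) \ge (1 - \|E\|)^{n}$ as soon as $\|E\| < 1$; since $\det(A) > 0$ by hypothesis, taking positive square roots gives $\det(A) \ge (1 - \|E\|)^{n/2}$. Combining,
$$\frac{\|A\|^{n}}{\det(A)} \le \left(\frac{1 + \|E\|}{1 - \|E\|}\right)^{n/2},$$
and the right-hand side tends to $1$ as $\|E\| \to 0$. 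Given $\eta > 0$, I would fix $\varepsilon > 0$ with $\big((1+\varepsilon)/(1-\varepsilon)\big)^{n/2} \le 1 + \eta$, and then choose $\delta$ small enough (depending on $\eta$, $n$, and $R$) that the bound on $\|E\|$ above forces $\|E\| < \varepsilon$. This yields $\|A\|^{n} \le (1+\eta)\det(A)$.

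There is no genuine obstacle in this argument; the only places needing a little care are the passage from the two scalar hypotheses to a bound on the operator norm $\|E\|$, which I would handle by dominating $\|E\|$ by its Frobenius norm, and the extraction of the positive square root of $\det(G)$, which is legitimate precisely because $\det(A) > 0$ is assumed. One could also argue by compactness — the set of matrices whose columns have norm $R$ and are pairwise orthogonal is compact, the map $A \mapsto \|A\|^{n}/\det(A)$ is continuous where $\det > 0$, and it equals $1$ on that set — but the direct estimate above is cleaner and produces an explicit $\delta$.
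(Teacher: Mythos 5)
Your proof is correct, and it takes a genuinely different route from the one in the paper. The paper argues by contradiction and compactness: after normalizing $R=1$, one supposes a sequence of matrices $A(k)$ satisfying the hypotheses with $\delta\to 0$ but $\|A(k)\|^n > (1+\eta)\det(A(k))$, extracts a limit $A$ with orthonormal columns, and observes that $\|A\|^n \geq (1+\eta)\det(A)$ contradicts $A$ being orthogonal. Your argument instead passes to the Gram matrix $G=A^T A = I+E$, bounds $\|E\|$ in terms of $\delta$ via the Frobenius norm, uses that the eigenvalues of $G$ lie in $[1-\|E\|,1+\|E\|]$, and then reads off $\|A\|^2=\lambda_{\max}(G)$ and $\det(A)^2=\det(G)$ to get the explicit bound $\|A\|^n/\det(A)\leq \bigl((1+\|E\|)/(1-\|E\|)\bigr)^{n/2}$. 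The trade-off is the usual one: the paper's compactness argument is short but non-constructive (it gives no relation between $\delta$ and $\eta$), while your direct estimate is slightly longer but produces an explicit, quantitative $\delta=\delta(\eta,n,R)$, which could matter if one later wanted to track the rate $\eta(k)\to 0$ in Lemma \ref{lemma:properties of f}. You also correctly flagged the one place where care is needed, namely passing from entrywise bounds on $E$ to an operator-norm bound, and the positive-square-root step, which uses $\det(A)>0$. Both proofs are sound; yours is the more informative of the two.
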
 
	This is to say that if all the columns of a matrix have almost the same magnitude and they are almost perpendicular to each other, then the matrix is close to being conformal, that is, an isometry up to scaling. This conclusion applies also to $3\times 2$ matrices, in view of the preceding remark. We provide a sketch of the elementary proof.
	\begin{proof}
		By scaling, we may assume that $R=1$. Suppose that the statement fails, so we can find a sequence of matrices $A(k)$, $k\in \N$, converging to a matrix $A$ with $|A_i|=1$, $\langle A_i,A_j\rangle =0$ for $i\neq j$, and $\|A(k)\|^n > (1+\eta)\det(A(k))$ for all $k\in \N$ and for some $\eta>0$. Passing to the limit, we have $\|A\|^n \geq (1+\eta) \det(A)$. However, the matrix $A$ is orthogonal, which is a contradiction. 
	\end{proof}

	\begin{proof}[Proof of Lemma \ref{lemma:properties of f}]
		Suppose for the moment that $G$ is an embedding of $\Omega_k$ into $\R^3$. Observe that $G$ is conformal outside the annulus $a\leq r\leq 1$ and that the definition of $G$ does not depend on $k$; it is only the domain that changes in $k$. The mapping $G$ has differential matrix
		\begin{align*}
		D{G}(x,y) = \begin{pmatrix} \frac{y^2g(r)}{r^3} + \frac{x^2g'(r)}{r^2} & xy \left( -\frac{g(r)}{r^3} + \frac{g'(r)}{r^2}\right) \\ xy \left( -\frac{g(r)}{r^3} + \frac{g'(r)}{r^2}\right) & \frac{x^2g(r)}{r^3} + \frac{y^2g'(r)}{r^2} \\ (-y+x\theta)\frac{h(r)}{r} + x \theta h'(r) & (x+y\theta)\frac{h(r)}{r} + x \theta h'(r) \end{pmatrix}.
		\end{align*}
		If $y=0$, then $x=r$ and $\theta = 0$. In this case, the differential matrix simplifies to
		\begin{align}\label{Df}
		D{G}(x,0) = \begin{pmatrix} g'(x) & 0 \\ 0 & \frac{g(x)}{x} \\ 0 & h(x) \end{pmatrix}.
		\end{align}
		Since $g'(x)^2= g(x)^2/x^2+h(x)^2$, it follows by the discussion after the statement of the lemma that ${G}$ is conformal along the positive $x$-axis. Hence by smoothness $G$ is close to conformal  when $y$ is sufficiently small. This is to say that when $y$ is near $0$ the tangent vectors $G_x$ and $G_y$ have almost they same length, very close to $g'(r)$, and they are almost perpendicular to each other. Thus, the conclusion from this calculation is that for all $\eta >0$, one can pick $k$ large enough so that $G$ is $(1+ \eta)$-quasiconformal in $\Omega_k$. This proves \eqref{item:properties_f(b)}.
		
		In order to show that $G$ is an embedding, we first observe that it is an immersion at points $(x,0)$, since its differential matrix has full rank by the previous computation. Indeed, $g'\simeq 1$ by Lemma \ref{lemma:g}\eqref{item:g(b)}. As soon as $k$ is sufficiently large, the region $\Omega_k$ is very close to the positive real axis, so we obtain by smoothness that $DG$ also has full rank there. To see that $G$ is a homeomorphism onto its image, we observe that $G$ does not change the polar angle, hence if $G(x,y)=G(x',y')$, then $\theta=\theta'$. Since $g$ is strictly increasing, we also obtain $r=r'$, which implies that $(x,y)=(x',y')$. Therefore, $G$ is injective in $\Omega_k$, that is, \eqref{item:properties_f(a)} holds. 
		
		The same argument shows that $G(\Omega_k)$ is the graph of a function. Namely, no vertical line intersects $\{G(x,y): (x,y)\in \Omega_k\}$ at two points. By the definition of $G$ and the fact that it does not change the polar angle, it is clear that it takes $\ell_{\pi/k}$ and $\ell_0$ into $\ell_{\pi/k}\times \R$ and $\ell_0\times \R$, respectively. If $k$ is sufficiently large, then the $t$-component $rh(r)\theta$ of $G$ is small, so this justifies the claims about the image of $\ell_{\pi/k}$ and $\ell_0$. Finally, since $h(1)=0$ and $g(1)=1$ (see Lemma \ref{lemma:g}), the map $G$ acts as the identity on the arc $\{(1,\theta): 0\leq \theta\leq \pi/k\}$. This completes the proof of \eqref{item:properties_f(d)}.
		
		For the derivative bounds in \eqref{item:properties_f(c)}, note that for each $\delta>0$ we can choose a sufficiently large $k$ such that $|G_x|\leq g'(r)(1+\delta)$ and $|G_y|\leq g'(r)(1+\delta)$ in $\Omega_k$; see \eqref{Df}. Hence, we obtain the upper bound $g'(r)(1+\delta)\leq  2L$, provided that $\delta$ is sufficiently small, where $L$ is as in Lemma \ref{lemma:g}\eqref{item:g(b)}.
	\end{proof}

	\subsection{Definition of \texorpdfstring{$F$}{F} on the set \texorpdfstring{$E$}{E}}
	
	Consider the unit normal vector $\mathbf{N}= G_x\times G_y/ |G_x\times G_y|$ defined on $\Omega_k$. Let 
	$$F(x,y,t)=G(x,y)+t g'(r)\cdot \mathbf N(x,y),$$
	where $(x,y,t)\in  \Omega_k \times \R$. Later we will restrict the domain of $F$. We record some immediate observations:
	
	\begin{lemm}\label{lemma:E:Lipschitz}
		For sufficiently large $k\in \N$ the following statements are true.
		\begin{enumerate}
			\item[\upshape(d$'$)] Restricted to $(\Omega_k\cap B(0,a))\times \R$, the map $F$ is a conformal scaling by the factor $b$.
			\item[\upshape(e$'$)] For sufficiently small $\varepsilon>0$ the map $F$ is locally Lipschitz on $D_k\times [-\varepsilon,\varepsilon]$ with Lipschitz constant $L=L(a)>0$, independent of $k$.
		\end{enumerate}
	\end{lemm}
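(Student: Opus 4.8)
The plan is to analyze the explicit formula $F(x,y,t) = G(x,y) + t\,g'(r)\,\mathbf N(x,y)$ term by term, using the structure established in Lemma \ref{lemma:properties of f} and the derivative bounds from Lemma \ref{lemma:g}. For part (d$'$), on the region $(\Omega_k \cap B(0,a)) \times \R$ we have $g(r) = br$ and $h(r) = 0$, so by the formula for $G$ and \eqref{Df} the map $G$ reduces to $(x,y) \mapsto (bx, by, 0)$, i.e.\ the conformal scaling $z \mapsto bz$ restricted to the plane; its differential is $b$ times the inclusion $\R^2 \hookrightarrow \R^3$, so the unit normal $\mathbf N$ is constantly $\mathbf e_3$ there and $g'(r) \equiv b$. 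Plugging in, $F(x,y,t) = (bx, by, 0) + tb\,\mathbf e_3 = (bx, by, bt) = bz$, which is exactly the claimed conformal scaling by $b$. (One should note that $\Omega_k \cap B(0,a)$ is nonempty and $F$ is smooth across the locus $r = a$ since $h$ and all relevant quantities match up smoothly by Lemma \ref{lemma:g}.)

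For part (e$'$), the strategy is to bound $\|DF\|$ uniformly on $D_k \times [-\varepsilon,\varepsilon]$ for all sufficiently large $k$, and then invoke convexity (or Theorem \ref{thm:exceptional_set}(ii), though here a direct bound suffices since $D_k \times [-\varepsilon,\varepsilon]$ is convex) to upgrade the derivative bound to a Lipschitz bound. Writing $F = G \circ \pi + (t \cdot g'(r))\,(\mathbf N \circ \pi)$ where $\pi(x,y,t) = (x,y)$, the partial derivatives are: $F_x = G_x + t(\partial_x(g'(r)))\mathbf N + t g'(r)\,\mathbf N_x$, similarly for $F_y$, and $F_t = g'(r)\,\mathbf N$. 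The term $F_t$ has magnitude exactly $g'(r) \leq L(a)$ by Lemma \ref{lemma:g}(ii). The terms $G_x, G_y$ are bounded by $L(a)$ (up to a factor $1+\delta$) by Lemma \ref{lemma:properties of f}(iii). The remaining terms carry a factor of $|t| \leq \varepsilon$, and involve $\partial_x(g'(r)), \partial_y(g'(r))$ (bounded in terms of $g''$, which is bounded on $[0,\infty)$ for fixed $a,b$ — here one uses that $g$ depends only on $a,b$, not on $k$) and the first derivatives $\mathbf N_x, \mathbf N_y$ of the unit normal. Since $\mathbf N = \mathbf e_3 + o(1)$ in $\Omega_k$ as $k \to \infty$ with the convergence being $C^1$ (the surface $G(\Omega_k)$ collapses to a flat strip), its derivatives $\mathbf N_x, \mathbf N_y$ are $o(1)$ — but even a crude uniform bound $|\mathbf N_x|, |\mathbf N_y| \leq C(a)$ (independent of $k$, valid for $k$ large, obtained by differentiating $\mathbf N = G_x \times G_y/|G_x \times G_y|$ and using that $|G_x \times G_y|$ is bounded below by a constant depending only on $a$ since $G$ is a uniform immersion) is enough. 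Choosing $\varepsilon = \varepsilon(a)$ small then makes the $|t|$-terms contribute at most, say, $L(a)$, giving $\|DF\| \leq C(a)$ on $D_k \times [-\varepsilon,\varepsilon]$ uniformly in $k$.

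The main obstacle is controlling the derivatives of the unit normal $\mathbf N$ uniformly in $k$: one must verify that, although the parametrization domain $\Omega_k$ shrinks toward the positive $x$-axis, the first derivatives $\mathbf N_x, \mathbf N_y$ do not blow up. This follows from the formula $\mathbf N = G_x \times G_y / |G_x \times G_y|$ together with two facts that hold for all large $k$: (a) the entries of $DG$ and their first partials are bounded by constants depending only on $a$ (from the explicit differential matrix displayed before \eqref{Df}, using boundedness of $g', g'', h, h'$, which in turn depend only on $a, b$ and not on $k$ — note $h$ and $h'$ are controlled via Lemma \ref{lemma:g}(iv) and the fact that $h$ is supported in $[a,1]$), and (b) $|G_x \times G_y| \geq c(a) > 0$, since at $y = 0$ the cross product has magnitude $g'(x)\sqrt{g(x)^2/x^2 + h(x)^2} = g'(x)^2 \geq b^2$ by Lemma \ref{lemma:g}(iii)-(iv), and by smoothness and closeness of $\Omega_k$ to the $x$-axis this persists with a uniform lower bound for $k$ large. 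Once this uniform $C^1$ control on $\mathbf N$ is in hand, the Lipschitz bound for $F$ on $D_k \times [-\varepsilon,\varepsilon]$ is a routine application of the product and quotient rules followed by the mean value inequality on the convex set $D_k \times [-\varepsilon,\varepsilon]$.
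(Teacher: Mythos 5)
Your proof of (d$'$) is essentially the paper's argument, with some extra detail about smoothness across $r=a$.

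For (e$'$), you have chosen a genuinely different and more demanding route. The paper's argument is short: at $t=0$ the partials $F_x(x,y,0)=G_x$, $F_y(x,y,0)=G_y$, $F_t(x,y,0)=g'(r)\mathbf N$ are bounded by $L(a)$ uniformly in $k$ (by Lemma~\ref{lemma:properties of f}(iii) and Lemma~\ref{lemma:g}(ii)); then, for each fixed $k$, smoothness of $F$ on the compact set $D_k$ yields some $\varepsilon=\varepsilon_k>0$ for which the partials are bounded by $2L(a)$ throughout $D_k\times[-\varepsilon,\varepsilon]$. The threshold $\varepsilon$ is allowed to depend on $k$ --- and indeed it is taken to be $\varepsilon_k$ in the subsequent construction --- and only the Lipschitz constant $L(a)$ must be $k$-independent. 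You instead aim for the stronger statement that $\varepsilon$ can be chosen depending only on $a$, uniformly in $k$, which forces you to control the $t$-dependent terms $t\,\partial_\bullet(g'(r))\mathbf N + t\,g'(r)\mathbf N_\bullet$ by bounding the derivatives of $\mathbf N$ uniformly in $k$. Your reasoning for that extra step is sound (the formula for $G$ is $k$-independent, the second partials of $G$ on $\Omega_k$ are controlled by their values near the positive $x$-axis, and $|G_x\times G_y|=g'(x)^2\geq b^2$ there gives a uniform lower bound on the denominator of $\mathbf N$), but it is unnecessary overhead: the lemma as stated does not require $\varepsilon$ to be uniform in $k$, and the paper's compactness argument is considerably shorter while delivering exactly the same $k$-independence of the Lipschitz constant.
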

	
	These properties will yield the required properties \eqref{item:basic_construction(d)} and  \eqref{item:basic_construction(e)} in Proposition \ref{prop:basic_construction}, after we restrict the domain of $F$ properly. 
	
	\begin{proof}
		If $(x,y)\in B(0,a)$, then $g'(r)=b=g(r)/r$ and $G(x,y)=b(x,y,0)$. Therefore, $\mathbf N(x,y)=(0,0,1)$ and $F(x,y,t)=b(x,y,0)+tb(0,0,1)=b(x,y,t)$. For the Lipschitz condition note that when $t=0$ the vectors $F_x(x,y,0)=G_x(x,y)$, $F_y(x,y,0)=G_y(x,y)$, and $F_t(x,y,0)=g'(r)\mathbf N(x,y)$ have magnitude bounded by some constant $L>0$, independent of $k$, by Lemma \ref{lemma:properties of f}\eqref{item:properties_f(c)} and Lemma \ref{lemma:g}\eqref{item:g(b)}. By smoothness, there exists an $\varepsilon>0$ such that the partial derivatives of $F$ are bounded by $2L$ in $ D_k\times [-\varepsilon,\varepsilon]$. This implies that $F$ is locally Lipschitz, as claimed.
	\end{proof}
	
	Next, we claim the following:
	
	\begin{lemm}\label{lemma:E:qc}
		\upshape(c$'$) For each $k\in \N$ there exists $\varepsilon>0$ such that $F$ is a  $(1+\eta(k))$-quasi\-conformal embedding of $D_k\times [-\varepsilon,\varepsilon]$ into $\R^3$, with $\eta(k)\to 0$ as $k\to\infty$. 
	\end{lemm}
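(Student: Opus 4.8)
The plan is to regard $F$ as a normal extension of the surface parametrization $G$: on the slice $t=0$ the differential $DF$ is almost block-diagonal, so quasiconformality there follows from Lemma~\ref{lemma:matrix qc}, and one then propagates the estimate to a thin slab $D_k\times[-\varepsilon,\varepsilon]$ by a continuity-and-compactness argument, with $\varepsilon$ allowed to depend on $k$.

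First I would compute the differential. Since $F(x,y,t)=G(x,y)+t\,g'(r)\mathbf N(x,y)$ with $\mathbf N=G_x\times G_y/|G_x\times G_y|$, the columns of $DF$ are $F_x=G_x+t\,\partial_x(g'(r)\mathbf N)$, $F_y=G_y+t\,\partial_y(g'(r)\mathbf N)$ and $F_t=g'(r)\mathbf N$, all smooth on $\Omega_k\times\R$. Evaluating at $t=0$ gives the columns $G_x$, $G_y$, $g'(r)\mathbf N$. By Lemma~\ref{lemma:properties of f}\eqref{item:properties_f(b)}, $|G_x|$ and $|G_y|$ equal $g'(r)+o(1)$ and $\langle G_x,G_y\rangle=o(1)$ as $k\to\infty$, uniformly on $\Omega_k$; by construction $\mathbf N$ is a unit vector orthogonal to both $G_x$ and $G_y$, so $|g'(r)\mathbf N|=g'(r)$ and $\langle g'(r)\mathbf N,G_x\rangle=\langle g'(r)\mathbf N,G_y\rangle=0$; moreover $\det DF(x,y,0)=g'(r)\,|G_x\times G_y|>0$. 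Since $b\le g'(r)\le L$ by Lemma~\ref{lemma:g}\eqref{item:g(b)}, dividing the three columns by $g'(r)$ makes their magnitudes equal to $1+o(1)$ and the off-diagonal inner products $o(1)$ while keeping the determinant positive; Lemma~\ref{lemma:matrix qc} (with $R=1$) then shows that $DF(x,y,0)$ is $(1+\eta(k))$-quasiconformal for every $(x,y)\in D_k$, with $\eta(k)\to0$ as $k\to\infty$.

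Next I would upgrade this to a slab of positive thickness and check the embedding property. Fix $k$. The map $(x,y,t)\mapsto DF(x,y,t)$ is continuous and $\overline{D_k}$ is compact, so by uniform continuity there is $\varepsilon=\varepsilon(k)>0$ such that for all $(x,y)\in D_k$ and all $|t|\le\varepsilon$ the columns of $g'(r)^{-1}DF(x,y,t)$ still satisfy the hypotheses of Lemma~\ref{lemma:matrix qc} with the same threshold $\delta$ as at $t=0$, and $\det DF>0$ there; hence $F$ is $(1+\eta(k))$-quasiconformal on $D_k\times[-\varepsilon,\varepsilon]$ (if one prefers a sequence unaffected by the compactness error, replace $\eta(k)$ by $2\eta(k)$, which still tends to $0$). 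Since $DF(x,y,0)$ is nonsingular at every point of $\overline{D_k}\times\{0\}$, after shrinking $\varepsilon$ the map $F$ is a local diffeomorphism on all of $D_k\times[-\varepsilon,\varepsilon]$. Global injectivity on a sufficiently thin slab follows from the tubular neighborhood theorem applied to the compact embedded smooth surface-with-boundary $G(\overline{D_k})$ (Lemma~\ref{lemma:properties of f}\eqref{item:properties_f(a)}): for each fixed $(x,y)$ the map $t\mapsto F(x,y,t)$ is a smooth positive reparametrization of the normal exponential map, so for small $\varepsilon=\varepsilon(k)$ the map $F$ is a diffeomorphism of $D_k\times[-\varepsilon,\varepsilon]$ onto a neighborhood of $G(D_k)$. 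Combining the three facts yields the lemma.

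I expect the only genuinely delicate point to be the separation of the two limiting regimes. The bound $\eta(k)\to0$ arises by letting $k\to\infty$, so that $\Omega_k$ collapses onto the positive $x$-axis where $G$ is nearly conformal, whereas for each fixed $k$ the admissible slab thickness $\varepsilon(k)$ shrinks; one must fix $k$ first and only then choose $\varepsilon$, which is exactly the order in which the statement is formulated. Apart from this, the proof is a routine continuity perturbation of the explicit model computation at $t=0$, together with the standard tubular-neighborhood injectivity argument.
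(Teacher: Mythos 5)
Your proposal is correct and follows essentially the same route as the paper: at $t=0$ the columns of $DF$ are $G_x$, $G_y$, $g'(r)\mathbf N$, Lemma~\ref{lemma:matrix qc} together with Lemma~\ref{lemma:properties of f}\eqref{item:properties_f(b)} gives near-conformality as $k\to\infty$, a compactness/continuity argument propagates this to a thin slab $|t|\le\varepsilon(k)$, and injectivity comes from a tubular-neighborhood argument for $G(\overline{D_k})$. The only cosmetic difference is that you normalize the columns by $g'(r)$ before invoking Lemma~\ref{lemma:matrix qc} (using scale-invariance of the quasiconformality constant), whereas the paper applies the lemma directly with the magnitude $\simeq g'(r)$; both handle the $r$-dependence of the common scale correctly.
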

	
	This lemma yields the required property \eqref{item:basic_construction(c)} in Proposition \ref{prop:basic_construction}, after we restrict the domain of $F$ properly. 
	
	\begin{proof}
		To show that $F$ is an embedding we use the following tubular-neighborhood lemma:
		\begin{lemm}
			Let $\Omega \subset \R^n$ be an open set and $q\colon \Omega \to \R^{n+1}$ be a smooth embedding. Suppose that $\mathbf v\colon \Omega \to \R^{n+1}$ is a smooth vector field such that the matrix $(Dq(w),v(w))\in M_{n+1}(\R)$ is non-singular at each point $w\in \Omega$.  Then for each compact set $ D \subset \Omega$ there exists $\varepsilon>0$ such that the map 
			\begin{align*}
			Q(w,t)= q(w)+ tv(w)
			\end{align*}
			is an embedding of $D \times [-\varepsilon,\varepsilon]$ into $\R^{n+1}$.
		\end{lemm}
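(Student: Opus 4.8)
The plan is the standard proof of the tubular neighborhood theorem: an application of the inverse function theorem, followed by a compactness argument. Set $Q\colon \Omega\times\R\to\R^{n+1}$, $Q(w,t)=q(w)+tv(w)$, which is smooth on the open set $\Omega\times\R\subset\R^{n+1}$. Its differential at a point $(w,0)$ is the $(n+1)\times(n+1)$ matrix whose $i$-th column ($1\le i\le n$) is $\partial_i q(w)$ and whose last column is $v(w)$; in the notation of the lemma this is exactly $(Dq(w),v(w))$, which is non-singular by hypothesis. Hence, by the inverse function theorem, $Q$ restricts to a diffeomorphism on some open neighborhood of each point $(w,0)$, $w\in\Omega$.

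First I would control the differential on a slab over $D$. The function $(w,t)\mapsto\det DQ(w,t)$ is continuous on $\Omega\times\R$ and, by the previous paragraph, non-zero on the compact set $D\times\{0\}$. A standard compactness argument (the tube lemma) then produces $\varepsilon_0>0$ such that $\det DQ(w,t)\neq 0$ for all $(w,t)\in D\times[-\varepsilon_0,\varepsilon_0]$; in particular $Q$ is an immersion, in fact a local diffeomorphism, on that slab.

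The crux is \emph{injectivity} on a sufficiently thin slab, which I would prove by contradiction. If no such $\varepsilon$ existed, then for every $j\in\N$ there would be distinct points $(w_j,t_j)\neq(w_j',t_j')$ in $D\times[-1/j,1/j]$ with $Q(w_j,t_j)=Q(w_j',t_j')$. Since $D$ is compact, after passing to a subsequence we may assume $w_j\to w_0$ and $w_j'\to w_0'$ in $D$, while $t_j,t_j'\to 0$. Passing to the limit in $Q(w_j,t_j)=Q(w_j',t_j')$ and using continuity of $Q$ yields $q(w_0)=q(w_0')$, whence $w_0=w_0'$ because $q$ is injective. But $Q$ is injective on some neighborhood $U\times(-\delta,\delta)$ of $(w_0,0)$ (being a local diffeomorphism there), and for $j$ large both $(w_j,t_j)$ and $(w_j',t_j')$ lie in $U\times(-\delta,\delta)$, which contradicts $Q(w_j,t_j)=Q(w_j',t_j')$ together with $(w_j,t_j)\neq(w_j',t_j')$. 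Thus there is $\varepsilon\in(0,\varepsilon_0]$ for which $Q$ is injective on $D\times[-\varepsilon,\varepsilon]$.

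Finally, $Q$ restricted to the compact set $D\times[-\varepsilon,\varepsilon]$ is a continuous injection into the Hausdorff space $\R^{n+1}$, hence a homeomorphism onto its image; combined with the immersion property established in the second step, this exhibits $Q$ as a smooth embedding of $D\times[-\varepsilon,\varepsilon]$ into $\R^{n+1}$, as desired. The only step carrying any subtlety is the injectivity argument, and the contradiction via local injectivity near $(w_0,0)$ disposes of it cleanly; everything else is routine.
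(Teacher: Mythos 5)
Your proof is correct and is exactly the standard tubular-neighborhood argument (inverse function theorem at each point of the zero slab, compactness to get a uniform slab on which $DQ$ is non-singular, then a sequential-compactness contradiction to upgrade local injectivity to injectivity on a thin slab). The paper itself omits the proof, citing Lee's tubular neighborhood theorem for precisely this line of reasoning, so your argument matches the intended approach.
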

		We omit the proof of that lemma but it can be done along the lines of the proof of the existence of tubular neighborhoods of embedded submanifolds of $\R^n$; see for instance \cite[Theorem 6.24]{Lee:13}.

		Note that $f$ is an embedding of a neighborhood $\Omega_k$ of $ D_k$ into $\R^3$, by Lemma \ref{lemma:properties of f}\eqref{item:properties_f(a)}. Moreover, $(DG(x,0),\mathbf N(x,0))$ is non-singular, since the tangent vectors $G_x(x,0)$, $G_y(x,0)$ have magnitude $g' \simeq 1$ (see Lemma \ref{lemma:properties of f}\eqref{item:properties_f(b)} and Lemma \ref{lemma:g}\eqref{item:g(b)}), and $\mathbf N(x,0)$ is the unit normal vector. By smoothness, we conclude that the matrix $(DG(x,y),\mathbf N(x,y))$ is non-singular when $y$ is small, and hence when $(x,y)$ lies in the set $\Omega_k$ for sufficiently large $k$. Thus the preceding lemma applies to yield that $F$ is an embedding of $ D_k\times [-\varepsilon,\varepsilon]$ into $\R^3$. Here $\varepsilon$ depends on $k$.
		
		By making $\varepsilon$ even smaller, we may have that $F$ is close to being conformal in $D_k\times [-\varepsilon,\varepsilon]$. Indeed, when $t=0$, the vectors $F_x(x,y,0)=G_x(x,y)$, $F_y(x,y,0)=G_y(x,y)$,  and $F_t(x,y,0)=g'(r)\mathbf N(x,y)$ have almost the same magnitude and are almost perpendicular to each other, as $k\to \infty$; see Lemma \ref{lemma:properties of f}\eqref{item:properties_f(b)}. Moreover, the matrix $(F_x,F_y,F_t)$ has positive determinant at points $(x,y,0)$, since $F_t$ is parallel to $G_x\times G_y$. By smoothness, these properties hold also for points $(x,y,t)$ for all sufficiently small $t$, say for $|t|\leq \varepsilon$.  Using Lemma \ref{lemma:matrix qc} we obtain the conclusion that $F$ is close to conformal in $D_k\times [-\varepsilon,\varepsilon]$.
	\end{proof}
	
	Next, we wish to restrict the domain of $F$ to the set $E = \{ z \in T_k: |t| \leq  \varphi(x,y)\}$, for some suitably chosen smooth function $\varphi$. We make this choice using the next lemma:
	
	\begin{lemm}[Theorem 2.29, \cite{Lee:13}]\label{lemma:bump function}
		For each closed set $A\subset \R^n$ there exists a non-negative smooth function $\psi\colon\R^n\to \R$ with $A=\psi^{-1}(0)$.
	\end{lemm}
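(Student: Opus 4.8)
The plan is to reduce the statement to the open complement $U = \R^n \setminus A$, to exhaust $U$ by countably many balls, and to patch together elementary bump functions with rapidly shrinking coefficients so that the resulting infinite series converges in $C^\infty$. First I would write $U = \bigcup_{i=1}^\infty B(x_i, r_i)$ as a countable union of open balls --- for instance, all balls with rational center and rational radius whose closure is contained in $U$. Using the smooth function $\sigma\colon \R \to [0,\infty)$ with $\sigma(s) = e^{-1/s}$ for $s>0$ and $\sigma(s) = 0$ for $s \leq 0$ (already used in the proof of Lemma \ref{lemma:g}), I would define for each $i$
\[
\psi_i(z) = \sigma\bigl(r_i^2 - |z-x_i|^2\bigr).
\]
This $\psi_i$ is smooth and non-negative, it is strictly positive precisely on $B(x_i,r_i)$, and it vanishes together with all its partial derivatives outside $B(x_i,r_i)$; in particular the quantity $M_i = \max_{|\alpha|\leq i}\,\sup_{z\in\R^n}|\partial^\alpha\psi_i(z)|$ is finite.

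Next I would choose coefficients $c_i > 0$ with $c_i M_i \leq 2^{-i}$ and set $\psi = \sum_{i=1}^\infty c_i\psi_i$. For any fixed multi-index $\alpha$, the formally differentiated series $\sum_i c_i\,\partial^\alpha\psi_i$ has its tail over $i \geq |\alpha|$ dominated by $\sum_{i\geq |\alpha|} c_i M_i \leq \sum_{i=1}^\infty 2^{-i} < \infty$, hence converges uniformly on $\R^n$; by the standard theorem on termwise differentiation of a uniformly convergent series of derivatives it follows that $\psi \in C^\infty(\R^n)$ with $\partial^\alpha\psi = \sum_i c_i\,\partial^\alpha\psi_i$. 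Since every summand is non-negative, $\psi \geq 0$. Finally, $\psi(z) = 0$ if and only if $\psi_i(z) = 0$ for every $i$, i.e.\ $z \notin B(x_i,r_i)$ for every $i$, i.e.\ $z \notin U$, i.e.\ $z \in A$; hence $A = \psi^{-1}(0)$, as required.

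The only step that I expect to require real care is the selection of the $c_i$: one needs the single series and every one of its formally differentiated series to converge uniformly on all of $\R^n$. The point is that the diagonal estimate $c_i M_i \leq 2^{-i}$ constrains $c_i$ using only the derivatives of $\psi_i$ of order at most $i$ --- a finite, hence admissible, condition on each individual $c_i$ --- while simultaneously controlling every $\partial^\alpha$-series from the index $|\alpha|$ onward. Everything else (smoothness and flatness of $\sigma$ at $0$, the identification of the positivity set of $\psi_i$, and the computation of the zero set of $\psi$) is routine.
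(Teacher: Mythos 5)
Your proof is correct and is essentially the standard argument from Lee's Theorem 2.29, which is exactly the reference the paper cites in lieu of a proof: exhaust the open complement by countably many balls, build a flat bump on each, and sum with coefficients chosen so that the series converges in $C^\infty$ via a diagonal estimate on the derivatives. The details you flag as needing care (choosing $c_i$ via $c_iM_i\le 2^{-i}$ with $M_i$ controlling only derivatives of order at most $i$, then applying the Weierstrass $M$-test to each formally differentiated series from index $|\alpha|$ onward) are handled correctly.
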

	
	Let $k\in \N$ be large enough and $\varepsilon=\varepsilon_k>0$ be so small that the conclusions of Lemma \ref{lemma:E:Lipschitz} and Lemma \ref{lemma:E:qc} hold in $D_k\times [-\varepsilon,\varepsilon]$. We consider a smooth function $\psi=\psi_k\colon \R^2 \to [0,\infty)$ as above, so that $0\leq \psi\leq \varepsilon$ and $\psi^{-1}(0)=\partial D_k$. Then the above conclusions for the regularity of $F$ hold in the set $E =  \{z\in T_k: |t|\leq s\cdot\psi (x,y)\} \subset  D_k\times [-\varepsilon,\varepsilon]$, where $s\in (0,1)$ is a small positive constant that will be chosen later. Note that $E$ depends on $k$ and $s$, and that it contains an open neighborhood of $\inter(D_k)\times \{0\}$, since $\psi>0$ on $\inter(D_k)$. This is crucial for part \eqref{item:basic_construction(c)} of Proposition \ref{prop:basic_construction}. 
	
	Summarizing, we have shown that $F|E$ satisfies the conditions \eqref{item:basic_construction(c)},\eqref{item:basic_construction(d)}, and \eqref{item:basic_construction(e)} from Proposition \ref{prop:basic_construction}. What remains to do is to define $F$ on the other pieces $S_i,S_i'$, $i=1,2$, of the partition of $T_k$ in such a way that $F$ maps $T_k$ onto itself quasiconformally with a uniform  bound on dilatation and a uniform Lipschitz bound (as in \eqref{item:basic_construction(e)}), taking the sets $\ell_{\pi/k} \times [-2,2]$, $\ell_0 \times [-2,2]$ onto themselves and acting as the identity map on $\partial T_k \setminus (\ell_{\pi/k} \times [-2,2]\cup \ell_0\times [-2,2])$; recall the discussion after Proposition \ref{prop:basic_construction}. The definition of $F$ on the other pieces of $T_k$ will be given in the next Sections \ref{sec:S_1} and \ref{sec:S_2}. We remark that we have not proved yet that $F(E)\subset T_k$; it could be the case that $F(E)$ falls out of the set $T_k$. We will ensure that this is not the case in Section \ref{sec:S_1}, by choosing a sufficiently small $s>0$.
	
	Before proceeding, we will discuss here some orientation matters related to the image $F(E)$. The set $E$ is homeomorphic to a closed ball in $\R^3$, and therefore, so is its image $F(E)$. We write $E=E^+\cup E^-$, where $E^+= \{z\in E: 0\leq t\leq s\psi(x,y)\}$ and $E^-= \{z\in E: -s\psi(x,y)\leq t\leq 0\}$. Since $E^+$ and $E^-$ are also both homeomorphic to closed balls, the same is true for their images under $F$. Recall that the set $F(D_k\times \{0\})=\{G(x,y):(x,y)\in D_k\}$ is the graph of a function by Lemma \ref{lemma:properties of f}\eqref{item:properties_f(d)}. We claim that $F(E^+)$ lies ``above" the graph $F(D_k\times\{0\})$ and $F(E^-)$ lies ``below" this graph, in the following sense:
	
	\begin{lemm}\label{lemma:orientation of F(E)}
		Let $\mathcal L$ be a vertical line that meets the set $F(\inter(D_k)\times \{0\})$ at a point $z_0$. Suppose that the line $\mathcal L$ is parametrized as it runs from $+\infty$ to $z_0$. Then $\mathcal L$ meets $F(E^+)$ before hitting $z_0$.
	\end{lemm}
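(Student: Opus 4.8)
The plan is to reduce the statement to a local orientation check near the point $z_0$, where $F$ is a smooth quasiconformal embedding, and then to use the explicit formula $F(x,y,t) = G(x,y) + t g'(r) \mathbf{N}(x,y)$ together with the fact that $\mathbf{N}$ is the upward-pointing unit normal to the graph $F(D_k \times \{0\})$. First I would fix the point $(x_0, y_0) \in \inter(D_k)$ with $G(x_0,y_0) = z_0$, and observe that near $z_0$ the set $F(E)$ is, by Lemma \ref{lemma:E:qc}, the diffeomorphic image of a small solid neighborhood of $(x_0, y_0, 0)$ in $D_k \times [-\varepsilon, \varepsilon]$. Writing $\mathbf{N}(x_0,y_0) = (n_1, n_2, n_3)$, the key computation is that the third coordinate $n_3 = \langle \mathbf{N}, \e \rangle$ of the normal vector is \emph{strictly positive}. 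This follows from Lemma \ref{lemma:properties of f}\eqref{item:properties_f(d)}: since $G(\Omega_k)$ is a graph over the plane (no vertical line meets it twice), and since $G$ acts as the identity on the outer arc where the graph is the flat disk $\{t = 0\}$, the unit normal $\mathbf{N} = G_x \times G_y / |G_x \times G_y|$ must have a consistent sign in its $\e$-component throughout the connected set $\Omega_k$; evaluating at a point on the positive $x$-axis using \eqref{Df}, where $G_x \times G_y$ is parallel to $(0, -h(x), g(x)/x)$ — wait, more precisely $G_x \times G_y = (g'(x)) \cdot (0, \ast, \ast)$ has positive $\e$-component equal to $g'(x) g(x)/x > 0$ — we conclude $n_3 > 0$ on all of $\Omega_k$.

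With $n_3 > 0$ established, I would argue as follows. Parametrize the vertical line $\mathcal{L}$ by $\gamma(\lambda) = z_0 + \lambda \e$, so that $\gamma$ runs from $+\infty$ (large $\lambda$) down to $z_0$ (at $\lambda = 0$). For small $\lambda > 0$ I must show $\gamma(\lambda) \in F(E^+)$, i.e.\ that there exist $(x,y)$ near $(x_0,y_0)$ and $t > 0$ with $G(x,y) + t g'(r) \mathbf{N}(x,y) = z_0 + \lambda \e$. Consider the smooth map $\Phi(x,y,t) = G(x,y) + t g'(r)\mathbf{N}(x,y)$ near $(x_0,y_0,0)$; its differential there has columns $G_x$, $G_y$, $g'(r_0)\mathbf{N}(x_0,y_0)$, which are linearly independent (the first two span the tangent plane to the graph, the third is transverse to it), and in fact the determinant is positive because $\mathbf{N}$ points to the side of the tangent plane containing $+\e$ — again because $n_3 > 0$. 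Hence $\Phi$ is an orientation-preserving local diffeomorphism at $(x_0,y_0,0)$, and its image of a half-neighborhood $\{t > 0\}$ is exactly the side of the graph $F(D_k \times \{0\})$ into which $\mathbf{N}$ points; since $\langle \mathbf{N}, \e \rangle > 0$, this is the ``upper" side. Therefore for all sufficiently small $\lambda > 0$, the point $z_0 + \lambda \e$ lies in $\Phi(\{t > 0\}) = F(E^+ \cap \{\text{small neighborhood}\})$, and consequently $\mathcal{L}$ meets $F(E^+)$ before reaching $z_0$.

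The main obstacle is making precise the intuitive claim that the image under $\Phi$ of the half-space $\{t>0\}$ is the ``upper side" of the graph, rather than some set that wraps around; this requires knowing that $F$ restricted to $E$ is a homeomorphism onto its image (true by Lemma \ref{lemma:E:qc}) and that it does not fold $E^+$ down below the graph. The clean way to handle this is to note that the graph $F(D_k \times \{0\})$ locally separates a neighborhood of $z_0$ in $\R^3$ into two components (upper, where the $\e$-coordinate exceeds the graph height, and lower), that $F(E^+ \setminus (D_k \times \{0\}))$ is connected and disjoint from the graph (by injectivity of $F$ on $E$), and hence lies entirely in one of the two components; the sign $n_3 > 0$ identifies that component as the upper one by checking a single point, namely $\Phi(x_0,y_0,t)$ for infinitesimal $t > 0$, whose $\e$-coordinate is $z_0 \cdot \e + t g'(r_0) n_3 + o(t) > z_0 \cdot \e$. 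This pins down the component, and the lemma follows.
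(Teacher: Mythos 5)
Your proposal is correct and takes essentially the same approach as the paper: both reduce the claim to the sign $\mathbf{N}(x_0,y_0)\cdot\e>0$, read off from \eqref{Df}, and then argue locally near $z_0$ that $F(E^+)$ occupies the upper side of the graph $F(D_k\times\{0\})$, so the descending vertical segment must enter $F(E^+)$ first. The paper formalizes the local separation with an auxiliary cone lemma while you appeal to the local diffeomorphism $\Phi$; note only that the phrase ``$F(E^+\setminus(D_k\times\{0\}))$ is connected $\ldots$ and hence lies entirely in one of the two (local) components'' is imprecise as written, since that global set does not fit in a small neighborhood of $z_0$ --- the intended statement, which does the job, is that $F$ restricted to a small neighborhood of $(x_0,y_0,0)$ is a diffeomorphism carrying $E^{\pm}$ to the two local sides of the graph, after which the sign check pins down $E^+$ as the upper one.
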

	
	Since the set $F(D_k\times \{0\})=\{G(x,y):(x,y)\in D_k\}$ is the graph of a function, the point $z_0$ is unique.
	
	\begin{proof}
		The point $z_0$ is an interior point of the topological ball $F(E)$. Therefore, there exists some open vertical line segment $(z_0,z_1)\subset \mathcal L$ that is contained in the interior of $F(E)$, where $z_1$ lies ``above" $z_0$. This segment does not intersect $F(D_k\times \{0\})$, so it has to be contained in the interior of either the topological ball $F(E^+)$ or the ball $F(E^-)$. We claim that this segment is contained in $F(E^+)$ and this will complete the proof.
		
		Suppose that $z_0=F(x_0,y_0,0)$ for some $(x_0,y_0)\in \inter(D_k)$. We consider the tangent plane of the parametric surface $F(D_k\times \{0\})$ at $(x_0,y_0)$. By direct computation one sees that the normal vector $\mathbf{N}=G_x\times G_y/|G_x\times G_y|$ has positive $t$-coordinate at $(x_0,y_0)$, as soon as $k$ is sufficiently large; see \eqref{Df}. This implies that the segment $(z_0,z_1)\subset \mathcal L$ and points $F(x_0,y_0,t)=z_0+tg'(r_0)\mathbf{N}(x_0,y_0)\in F(E^+)$ for small $t>0$ lie in the same half-space  determined  by the tangent plane. Now we can derive that $(z_0,z_1)$ is contained in $F(E^+)$ by the following elementary fact: 
		\begin{lemm}
			Suppose that $q\colon U\to \R^{n+1}$ is a smooth embedding, where $U$ is an open subset of $\R^{n}$. Then for each $z\in U$ and for each solid double cone $C$ with vertex at $q(z)$ and axis parallel to the normal vector of the surface $q(U)$ at $q(z)$ there exists a small ball $B(q(z),r)$ such that the intersection $B(q(z),r)\cap q(U)\cap C$ contains only $q(z)$.
		\end{lemm}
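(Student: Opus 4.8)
The plan is to reduce the statement to an elementary second-order Taylor estimate at $z$. After a translation we may assume $q(z)=0$; write $\mathbf{n}$ for a unit normal to $q(U)$ at $0$, so that the tangent space $V:=Dq(z)(\R^{n})$ is exactly $\mathbf{n}^{\perp}$. The geometric point is that a solid double cone $C$ with vertex $0$, axis along $\mathbf{n}$, and half-angle $\alpha\in(0,\pi/2)$ is precisely the set $\{p\in\R^{n+1}: |\langle p,\mathbf{n}\rangle|\geq(\cos\alpha)\,|p|\}$, so that a point $p$ near $0$ fails to lie in $C$ as soon as the ratio $|\langle p,\mathbf{n}\rangle|/|p|$ is smaller than $\cos\alpha$; and on the surface $q(U)$ this ratio tends to $0$ as we approach $0$, because the surface is tangent to $\mathbf{n}^{\perp}$ there.

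Concretely, I would first use smoothness of $q$ to write $q(w)=Dq(z)(w-z)+R(w)$ with $|R(w)|\leq C|w-z|^{2}$ for all $w$ in a fixed ball $B(z,\rho)\subset U$. Since $Dq(z)(w-z)\in V\perp\mathbf{n}$, this yields $|\langle q(w),\mathbf{n}\rangle|=|\langle R(w),\mathbf{n}\rangle|\leq C|w-z|^{2}$. Next, since $Dq(z)$ is an injective linear map it is bounded below, $|Dq(z)(w-z)|\geq c|w-z|$ for some $c>0$, so after shrinking $\rho$ we obtain $|q(w)|\geq (c/2)|w-z|$. Combining the two estimates,
\[
\frac{|\langle q(w),\mathbf{n}\rangle|}{|q(w)|}\leq \frac{2C}{c}\,|w-z|,
\]
which is less than $\cos\alpha$ once $w$ lies in a sufficiently small open neighborhood $W\subset B(z,\rho)$ of $z$; hence $q(w)\notin C$ for every $w\in W\setminus\{z\}$. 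Finally, because $q$ is an embedding, $q^{-1}$ is continuous on $q(U)$, so $q(W)$ is relatively open in $q(U)$ and there exists $r>0$ with $q(U)\cap B(0,r)\subset q(W)$. Any point of $q(U)\cap B(0,r)\cap C$ is then $q(w)$ for some $w\in W$, and the displayed estimate forces $w=z$, i.e.\ the point equals $q(z)$.

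There is no serious obstacle here; the only step that requires a moment's care is the last one, which is exactly where the embedding hypothesis (rather than mere immersion) enters, converting ``$w$ close to $z$ in $U$'' into ``$q(w)$ in a metric ball of $\R^{n+1}$.'' One should also note that the statement tacitly assumes $C$ is a genuine cone, i.e.\ with half-angle strictly less than $\pi/2$; for $\alpha=\pi/2$ (a pair of closed half-spaces) or $C=\R^{n+1}$ the conclusion plainly fails. Everything else is the routine Taylor estimate above.
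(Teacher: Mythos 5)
The paper states this lemma without proof, calling it an ``elementary fact,'' so there is no in-text argument to compare against. Your proof is correct and is essentially the standard argument one would write down: the second-order Taylor estimate $|\langle q(w)-q(z),\mathbf{n}\rangle|\lesssim|w-z|^2$ combined with the linear lower bound $|q(w)-q(z)|\gtrsim|w-z|$ (from injectivity of $Dq(z)$) shows the angular ratio tends to $0$, so surface points near $q(z)$ escape any cone of half-angle less than $\pi/2$; the embedding hypothesis then upgrades ``near $z$ in $U$'' to ``inside a small ball $B(q(z),r)$ in $\R^{n+1}$,'' which is precisely the step where immersion alone would not suffice. Your caveat that the half-angle must be strictly less than $\pi/2$ is correct and worth noting, since the paper's usage (separating the surface from a nearby vertical segment) always involves such a genuine cone. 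The argument is complete and fills the gap the paper left to the reader.
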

		In our case, we can connect a point $F(x_0,y_0,t)$ for small enough $t$ to a point of $(z_0,z_1)\subset \mathcal L$ with a straight line segment lying in an appropriate cone, without hitting the set $F(D_k\times \{0\})$. This shows that the segment $(z_0,z_1)$ and $(x_0,y_0,t)$ both lie in the same component of $F(E)\setminus F(D_k\times \{0\})$ and thus they both lie in $F(E^+)$, as desired. 
	\end{proof}

	\subsection{Definition of \texorpdfstring{$F$}{F} on \texorpdfstring{$S_1$}{S1} and \texorpdfstring{$S_1'$}{S1'}}\label{sec:S_1}
	
	Given $s \in [0,1]$, we define $S_1(s)= \{z\in T_k: s\psi(x,y)\leq t\leq 1\}$. For a point $z=(x,y,t) \in S_1(s)$ and for $s\in [0,1]$, define a map $F^s(z)$ by mapping the vertical line from $(x,y, s\psi(x,y))$ to $(x,y,1)$ linearly onto the vertical line from $F(x,y,s\psi(x,y))$ to $(F_1(x,y,s\psi(x,y)), F_2(x,y,s\psi(x,y)), 1)$; here  we have written $F$ in coordinates as $F=(F_1,F_2,F_3)$. More explicitly,
	\begin{align*}
	F^s(x,y,t) = F(x,y,s\psi(x,y)) + \frac{t-s\psi(x,y)}{1-s\psi(x,y)}\left(1 - F_3(x,y, s\psi(x,y))\right)\e .
	\end{align*} 
	Recall at this point that $s\psi\leq \psi\leq  \varepsilon \ll 1$, and that $\varepsilon$ and $\psi$ depend on $k$.
	
	\begin{lemm}\label{lemma:S_1}
		For all sufficiently small $s>0$ the map $F^s|S_1(s)$ is a $K$-quasi\-conformal embedding, where $K=K(a,b)$ is independent of $k$, and locally $L$-Lipschitz, where $L=L(a)>0$ is independent of $k$. More precisely, for each sufficiently large $k\in \N$ there exists $\delta>0$ such that for all $s\in (0,\delta]$ the map $F^s|S_1(s)$ is a locally $L$-Lipschitz $K$-quasiconformal embedding, with $L=L(a)$ and $K=K(a,b)$ independent of $k$.
	\end{lemm}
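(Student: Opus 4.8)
The plan is to realize $F^s$ as a controlled perturbation of an explicit ``model'' map and to read off all the uniform constants from that model. Write $\Phi_s(x,y) = F(x,y,s\psi(x,y)) = G(x,y) + s\psi(x,y)\,g'(r)\,\mathbf N(x,y)$, with components $\Phi_s=(\Phi_{s,1},\Phi_{s,2},\Phi_{s,3})$, and $\lambda(x,y,t) = \tfrac{t-s\psi(x,y)}{1-s\psi(x,y)}$, so that $F^s = \Phi_s + \lambda\,(1-\Phi_{s,3})\,\e$. Since $0\le s\psi\le\varepsilon_k\ll 1$, this formula extends smoothly to a neighborhood of the compact set $S_1(s)$. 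I would proceed as follows: (i) identify the limit of $DF^s$ as $s\to 0$ and then as the wedge $D_k$ collapses onto the positive $x$-axis; (ii) check that this limiting matrix is $K(a,b)$-quasiconformal with operator norm $\le L(a)$, using only Lemma \ref{lemma:g}; (iii) conclude by a perturbation argument in which the loss is absorbed into $K$ and $L$ but the \emph{size} of $\delta$ is allowed to depend on $k$; (iv) verify injectivity separately.

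For (i)--(ii): differentiating $F^s$ and letting $s\to 0$ (with $k$, hence $\psi$ and $\varepsilon_k$, fixed) gives $DF^0(x,y,t)$, whose first two rows are those of $DG(x,y)$ and whose third row is $\big((1-t)\partial_xG_3,\,(1-t)\partial_yG_3,\,1-G_3\big)$; evaluating at $y=0$ and using \eqref{Df} together with $G_3(x,0)=0$, the model matrix has columns $(g'(x),0,0)$, $(0,g(x)/x,(1-t)h(x))$, $(0,0,1)$. Its determinant equals $g'(x)g(x)/x\ge b^2>0$ by Lemma \ref{lemma:g}\eqref{item:g(b)}, and all entries are $\le L_0(a)$ by Lemma \ref{lemma:g}\eqref{item:g(b)} and $h\le g'$; hence it is $K_0$-quasiconformal for some $K_0=K_0(a,b)$ and has norm $\le L_0(a)$. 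Fix $\eta_0\in(0,1]$, depending only on $a,b$, so small that every matrix within operator-norm distance $\eta_0$ of some such model matrix (with $x,t\in[0,1]$) has positive determinant and is $2K_0$-quasiconformal.

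For (iii): a direct computation shows $DF^s\to DF^0$ uniformly on $D_k\times[-\varepsilon_k,1]$ as $s\to0$, while $\sup_{D_k\times[0,1]}\|DF^0(x,y,t)-DF^0(x,0,t)\|\to0$ as $k\to\infty$ (by smoothness of $G$ and $\sup_{(x,y)\in D_k}|y|\le\tan(\pi/k)$). So I would first choose $k_0$ with the latter supremum $<\eta_0/2$ for $k\ge k_0$, and then, for each such $k$, choose $\delta=\delta(k)>0$ so small that $\|DF^s-DF^0\|<\eta_0/2$ on $D_k\times[-\varepsilon_k,1]$ for all $s\in(0,\delta]$. Then at every point of $S_1(s)$ the differential $DF^s$ lies within $\eta_0$ of a model matrix, so it has positive Jacobian, is $K:=2K_0=K(a,b)$-quasiconformal, and has norm $\le L_0(a)+1=:L(a)$ — all independent of $k$, since $\eta_0$ (hence $K$, $L$) were pinned down from the model before $\psi$ entered. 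For (iv): if $F^s(x,y,t)=F^s(x',y',t')$, the first two coordinates force $(\Phi_{s,1},\Phi_{s,2})(x,y)=(\Phi_{s,1},\Phi_{s,2})(x',y')$; at $s=0$ this map is $(r,\theta)\mapsto(g(r),\theta)$ in polar coordinates, a diffeomorphism of $D_k$ onto its image since $g$ is strictly increasing, so (shrinking $\delta(k)$ if needed) $(x,y)=(x',y')$; then $t\mapsto F^s_3(x,y,t)$ is strictly increasing because $\partial_tF^s_3=(1-\Phi_{s,3})/(1-s\psi)>0$ (as $\Phi_{s,3}$ is $O(1/k)$-close to $G_3=rh(r)\theta$ and hence $<1$), so $t=t'$. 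Thus $F^s$ is a continuous injection of a compact set, hence an embedding, and being smooth with positive Jacobian it is a smooth embedding; the bound $\|DF^s\|\le L$ yields local $L$-Lipschitzness by integration along short segments inside convex subneighborhoods (only ``locally'' because $S_1(s)$ need not be convex).

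The main obstacle is the uncontrolled $k$-dependence of the bump function $\psi=\psi_k$: Lemma \ref{lemma:bump function} gives no bound on $\nabla\psi_k$, so $DF^s-DF^0$ cannot be made uniformly small in $k$ for a fixed $s$. This is exactly why $\delta$ must be allowed to depend on $k$ in the statement, and the argument is organized so that the uniform constants $K(a,b)$ and $L(a)$ are read off from the $s=0$, $y=0$ model before $\psi$ is ever used.
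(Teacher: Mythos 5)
Your proof is correct, and its substance is the same as the paper's: extract the model matrix at $s=0$, $y=0$, read off $K(a,b)$ and $L(a)$ from it using only Lemma~\ref{lemma:g}, and then treat the passage to large $k$ and small $s$ as a two-stage perturbation in which $\delta$ may depend on $k$ but the uniform constants are locked in before $\psi_k$ ever appears. The organizational difference is genuine, though: the paper factors $F^s = \beta^s\circ\alpha^s$, with $\alpha^s\colon S_1(s)\to D_k\times[0,1]$ a vertical straightening satisfying $\alpha^0=\id$, and $\beta^s\colon D_k\times[0,1]\to T_k$ carrying the geometry, and then applies the stability Lemma~\ref{lemma:stability} to each factor. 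This buys a clean $s=0$ limit for each piece (the identity and a $\psi$-free map) and a fixed domain, so Lemma~\ref{lemma:stability} applies verbatim. You instead estimate $DF^s$ directly on the fixed set $D_k\times[-\varepsilon_k,1]\supset S_1(s)$, which is equally valid and arrives at precisely the same model matrix $D\beta^0(x,0,t)$, at the cost of slightly heavier bookkeeping in the matrix comparison. Two small points worth tightening: the injectivity step for $(\Phi_{s,1},\Phi_{s,2})$ (``shrinking $\delta(k)$ if needed'') is exactly the stability of embeddings encapsulated in Lemma~\ref{lemma:stability}, so it should be cited rather than left implicit; and the informal ``$O(1/k)$-close'' estimate on $\Phi_{s,3}$ is more honestly the bound $|\Phi_{s,3}|\le |G_3| + s\psi\,|g'||\mathbf N_3| \le \pi L/k + s\varepsilon_k L$, which suffices to give $\Phi_{s,3}<1/2$ and hence $\partial_t F^s_3>0$, but is not genuinely $O(1/k)$ until $s\varepsilon_k$ is also controlled.
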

	
	Note here that $S_1(s)$ depends on $k$. For the proof of Proposition \ref{prop:basic_construction} the desired map $F$ will be chosen to be one of the maps $F^s$ for a sufficiently small $s>0$.
	
	\begin{proof}
		First, we write $F^s$ as the composition of two maps, which are linear on vertical lines: 
		\begin{enumerate}[\upshape(i)]
			\item a map $\alpha^s$ from $S_1(s)$ onto $\{z\in T_k: 0\leq t\leq 1\}= D_k\times [0,1]$, and
			\item a map $\beta^s$ from $\{z\in T_k: 0\leq t\leq 1\}$ onto $\{z\in T_k : F_3(x,y,s\psi(x,y)) \leq t\leq 1\}$. 
		\end{enumerate}
		It suffices to show that for all sufficiently small $s\in (0,1]$ the maps $\alpha^s$ and $\beta^s$ are locally Lipschitz (with uniform constants) quasiconformal homeomorphisms, with a bound on dilatation independent of $k$. To prove this, we use the following stability lemma:
		
		\begin{lemm}\label{lemma:stability}
			Let $\Omega\subset \R^n$ be an open set. Suppose that $\zeta_s\colon \Omega\to \R^n$, $s\in [0,1]$, is a family of  smooth maps depending smoothly on $s\in [0,1]$ such that $\zeta_0$ is a locally $L$-Lipschitz $K$-quasiconformal embedding. Then for each compact set $D\subset \Omega$ and for each $\eta>0$ there exists $\delta>0$ such that $\zeta_s| D$ is a locally $(L+\eta)$-Lipschitz $(K+\eta)$-quasiconformal  embedding for all $s\in [0,\delta]$.
		\end{lemm}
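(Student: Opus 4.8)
The hypothesis that $\zeta_s$ depends smoothly on $s\in[0,1]$ means that $(w,s)\mapsto\zeta_s(w)$ is smooth on $\Omega\times[0,1]$; the only consequence I will use is that $\zeta_s\to\zeta_0$ in the $C^1$ topology, uniformly on compact subsets of $\Omega$, as $s\to0$. Fix a compact $D\subset\Omega$ and choose a compact $D'$ with $D\subset\inter(D')\subset D'\subset\Omega$. Since $\zeta_0$ is a smooth injective immersion, its differential has $\det D\zeta_0>0$ everywhere on $\Omega$ (otherwise the $K$-quasiconformality inequality $\|D\zeta_0\|^n\leq K\det D\zeta_0$ would fail), so $\det D\zeta_0\geq c_0$ on $D'$ for some $c_0>0$ by compactness; moreover $\|D\zeta_0\|\leq L$ on $\Omega$ because $\zeta_0$ is locally $L$-Lipschitz, and $\|D\zeta_0\|^n\leq K\det D\zeta_0$ on $\Omega$. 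Note also that an injective $C^1$ immersion $\Omega^n\to\R^n$ is automatically a diffeomorphism onto its (open) image, so in particular $\zeta_0$ is bi-Lipschitz on every closed ball contained in $\Omega$.

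I would first dispatch the three ``soft'' conclusions, which do not use injectivity of $\zeta_0$ at all. By uniform $C^1$-convergence, for each $\varepsilon>0$ there is $\delta_\varepsilon>0$ with $\sup_{D'}(|\zeta_s-\zeta_0|+\|D\zeta_s-D\zeta_0\|)<\varepsilon$ for $s\in[0,\delta_\varepsilon]$. Taking $\varepsilon$ small relative to $c_0$ gives $\det D\zeta_s\geq c_0/2>0$ on $D'$, so $\zeta_s$ is an immersion there; taking $\varepsilon\leq\eta$ gives $\|D\zeta_s\|\leq L+\eta$ on $D'$, hence $\zeta_s|D'$ is locally $(L+\eta)$-Lipschitz by the mean value inequality on convex neighborhoods; and, since $A\mapsto\|A\|^n/\det A$ is continuous on $\{\det>0\}$ and therefore uniformly continuous on the compact set $\{A:\operatorname{dist}(A,\{D\zeta_0(w):w\in D'\})\leq1,\ \det A\geq c_0/2\}$, taking $\varepsilon$ small enough yields $\|D\zeta_s\|^n\leq(K+\eta)\det D\zeta_s$ on $D'$. (One may instead expand $\|D\zeta_s\|^n-(K+\eta)\det D\zeta_s$ as the sum of $\|D\zeta_s\|^n-\|D\zeta_0\|^n$, the nonpositive term $\|D\zeta_0\|^n-K\det D\zeta_0$, the term $K(\det D\zeta_0-\det D\zeta_s)$, and $-\eta\det D\zeta_s\leq-\eta c_0/2$, and make the first and third less than $\eta c_0/4$ by shrinking $s$.)

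The substantive point, and the step I expect to be the main obstacle, is that $\zeta_s|D$ stays \emph{injective} for small $s$: injectivity is a global condition and is not stable under $C^1$-small perturbations in general, so compactness of $D$ together with injectivity of $\zeta_0$ must be used. I would argue by contradiction: if no such $\delta$ works, choose $s_j\downarrow0$ and $p_j\neq q_j$ in $D$ with $\zeta_{s_j}(p_j)=\zeta_{s_j}(q_j)$, and pass to a subsequence with $p_j\to p$, $q_j\to q$ in $D$. Uniform convergence $\zeta_{s_j}\to\zeta_0$ on $D$ gives $\zeta_0(p)=\zeta_0(q)$, so $p=q$ by injectivity of $\zeta_0$. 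Pick a closed ball $\overline{B(p,\rho)}\subset\inter(D')$ on which $\zeta_0$ is bi-Lipschitz, say $|\zeta_0(x)-\zeta_0(y)|\geq c_p|x-y|$. For $j$ large, $p_j,q_j\in B(p,\rho)$ and $\sup_{D'}\|D\zeta_{s_j}-D\zeta_0\|<c_p/2$; applying the mean value inequality to $\zeta_{s_j}-\zeta_0$ along the segment $[p_j,q_j]\subset\overline{B(p,\rho)}$ gives
\[
|\zeta_{s_j}(p_j)-\zeta_{s_j}(q_j)|\geq|\zeta_0(p_j)-\zeta_0(q_j)|-\tfrac{c_p}{2}|p_j-q_j|\geq\tfrac{c_p}{2}|p_j-q_j|>0,
\]
contradicting $\zeta_{s_j}(p_j)=\zeta_{s_j}(q_j)$. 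Hence $\zeta_s|D$ is injective for small $s$; being a continuous injection of the compact set $D$ that is also an immersion, it is then an embedding. Taking $\delta$ to be the minimum of the finitely many thresholds obtained above completes the proof.
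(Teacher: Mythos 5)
Your proof is correct, and it follows the same route as the paper, which simply cites the stability-of-embeddings theorem in Guillemin--Pollack for the injectivity part and remarks that the Lipschitz and quasiconformal bounds follow from continuity of $D\zeta_s$ in $s$. You have merely unpacked that citation: your ``soft'' step is exactly the paper's continuity remark, and your compactness-plus-contradiction argument (pass to a subsequence, use injectivity of $\zeta_0$ to collapse the two limit points, then use a local bi-Lipschitz bound from the inverse function theorem together with the mean value inequality to contradict coincidence) is the standard proof of the cited stability theorem, adapted from compact manifolds to a compact subset $D$ of the open domain $\Omega$. Everything checks out, including the observation that $\det D\zeta_0>0$ is forced by the quasiconformality inequality for an immersion.
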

		
		The stability of embeddings of compact manifolds is well-known and a proof can be found in \cite[pp.\ 35--37]{GuiPol:10}. The Lipschitz and quasiconformal stability follow from the fact that the differential $D\zeta_s$ as well as its Jacobian and its operator norm depend continuously on $s$. 
		
		The map $(\alpha^s)^{-1}$ is defined on $\{z\in T_k:0\leq t\leq 1\}$ as 
		\begin{align*}
		(\alpha^s)^{-1}(x,y,t)= (x,y, (1-t)s\psi(x,y)+ t)=\id(x,y,t) +(0,0,(1-t)s\psi(x,y)).
		\end{align*}
		Since $\alpha^0$ is the identity, by the lemma (applied to a neighborhood of $\{z\in T_k:0\leq t\leq 1\}$) we conclude that $\alpha^s$ is a locally $2$-Lipschitz $K$-quasiconformal embedding with $K$ independent of $k$, for all sufficiently small $s>0$.
		
		The map $\beta^s$ is 
		\begin{align*}
		\beta^s(x,y,t)= (1-t)F(x,y,s\psi (x,y)) +t ( F_1(x,y,s\psi (x,y)), F_2(x,y,s\psi (x,y)),1).
		\end{align*}
		By the stability Lemma \ref{lemma:stability}, it suffices to show that for $(x,y,t)$ lying in a neighborhood of $ D_k\times [0,1]$ the map 
		\begin{align*}
		\beta^0(x,y,t)&=(1-t)F(x,y,0) +t ( F_1(x,y,0), F_2(x,y,0),1)\\
		&= (F_1(x,y,0),F_2(x,y,0), (1-t)F_3(x,y,0)+t) \\
		&= \left(\frac{g(r)}{r}x, \frac{g(r)}{r}y, (1-t)rh(r)\theta +t\right)
		\end{align*}	
		is a locally $L$-Lipschitz quasiconformal embedding with dilatation bound $K$ independent of $k$, where $L>0$ is a constant also independent of $k$. 
		
		The fact that $\beta^0$ is a homeomorphism follows from Lemma \ref{lemma:properties of f}\eqref{item:properties_f(d)}, which implies that there exists an open neighborhood $\Omega_k$ of $ D_k$ such that each vertical line intersects  
		\begin{align*}
		\{F(x,y,0): (x,y)\in \Omega_k \} = \{G(x,y): (x,y)\in \Omega_k \}
		\end{align*}
		at one point. It is also crucial here that the $t$-component $(1-t)rh(r)\theta+t$ of $\beta^0$ is increasing in $t$. Indeed, $1-rh(r)\theta \geq 1/2>0$ by Lemma \ref{lemma:properties of f}\eqref{item:properties_f(d)}, upon choosing a large enough $k\in \N$.
		
		Next, we show that $\beta^0$ is an immersion. The differential matrix at points $(x,0,t)$ is
		
		\begin{align*}
		D\beta^0(x,0,t) = \begin{pmatrix}
		g'(x) & 0 &0 \\
		0 & \frac{g(x)}{x} & 0\\
		0 & (1-t)h(x) & 1 
		\end{pmatrix}.
		\end{align*}
		See also \eqref{Df}. Since $g'(x)g(x)/x \simeq 1$ for all $x\in [0,\infty)$, we see that $D\beta^0(x,y,t)$ is non-singular for all $y$ sufficiently close to $0$ and for all $t$. In particular, by compactness this holds for $(x,y,t)$ lying in a neighborhood of $D_k \times [0,1]$, if $k$ is sufficiently large.
		
		The quasiconformality and Lipschitz bounds follow from the smoothness and Lemma \ref{lemma:g}. Indeed, by the above computation, for sufficiently large $k$, we have $\|D\beta^0(x,y,t)\| \lesssim 1$ for $(x,y,t)$ in a neighborhood of $D_k\times [0,1]$, and also $J_{\beta^0} (x,y,t) \simeq 1$. 
	\end{proof}

	Lemma \ref{lemma:S_1} implies that for each sufficiently large $k\in \N$ and for all sufficiently small $s>0$ the set $\{F(x,y,s\psi(x,y)): (x,y)\in D_k\}$ is the graph of a function. Indeed, the map $F^s|S_1(s)$ is an embedding and is defined linearly on vertical lines. By Lemma \ref{lemma:orientation of F(E)}, this graph lies above the set $\{F(x,y,0): (x,y)\in D_k\}$, which is also a graph of a function contained in $T_k$, by Lemma \ref{lemma:properties of f}\eqref{item:properties_f(d)}. Note that when $(x,y)\in \partial D_k$ the function $\psi(x,y)$ vanishes, so the two graphs are ``glued" along the set $\{F(x,y,0): (x,y)\in \partial D_k\}$. These remarks show that both graphs $\{F(x,y,s\psi(x,y)): (x,y)\in D_k\}$ and $\{F(x,y,0): (x,y)\in D_k\}$ and also the topological ball $F(E^+)$ enclosed by them are contained in $T_k$; see also the remarks preceding Lemma \ref{lemma:orientation of F(E)}.
	
	At this point we fix a value $s \in (0,1]$ sufficiently small as in Lemma \ref{lemma:S_1} and define $F$ on $S_1(s)$ to be $F^s$. Then $F|(E^+\cup S_1(s))$ is a homeomorphism. Set $\varphi = s\psi$, so that $S_1 = S_1(s)$. Moreover, with the same procedure we can define $F$ on $S_1'=\{ z\in T_k: -1\leq  t\leq -\varphi(x,y)\}$. The set $\{F(x,y,-\varphi(x,y)) :(x,y) \in D_k\}$ is again a graph of a function but it lies below $\{F(x,y,0):(x,y)\in D_k\}$. The region enclosed by these two graphs is $F(E^-)$ and it is contained in $T_k$. Pasting together the  maps $F|S_1',F|E^-,F|E^+$, and $F|S_1$ yields a map $F$ on $S_1'\cup E\cup S_1$, which is a homeomorphism  onto its image in $T_k$. Indeed, this map is the result of gluing homeomorphisms that agree on the intersection of their respective domains and whose images do not overlap otherwise.
	
	\begin{rem}\label{remark:boundary}
		A last observation is that $F$ acts as the identity on the arc $\{(1,\theta,0): 0\leq \theta\leq \pi/k\}$ (in cylindrical coordinates), by Lemma \ref{lemma:properties of f}\eqref{item:properties_f(d)}. The linear extensions on vertical lines ensure that $F$ is the identity on $\{(1,\theta,t): 0\leq \theta\leq \pi/k, -1\leq t\leq 1\}$. Moreover, since $\ell_{\pi/k}\times \{0\}$ and $\ell_0 \times\{0\}$ are mapped into $\ell_{\pi/k} \times [-1,1]$ and $\ell_0\times [-1,1]$, respectively by Lemma \ref{lemma:properties of f}\eqref{item:properties_f(d)}, the linear extensions on vertical lines ensure that the sets $\ell_{\pi/k} \times [-1,1]$ and $\ell_0\times [-1,1]$ are mapped onto themselves. These properties are required in order to extend $F$ to all the wedges of the cylinder $T$; see remarks after the statement of Proposition \ref{prop:basic_construction}.
	\end{rem}
	
	\subsection{Definition of \texorpdfstring{$F$}{F} on \texorpdfstring{$S_2$}{S2} and \texorpdfstring{$S_2'$}{S2'}}\label{sec:S_2}
	We recall that $S_2=  D_k \times [1,2]$. Define here for $s\in [0,1]$ a family of maps
	\begin{align*}
	F^s(x,y,t)&= (2-t)F^s(x,y,1)+ (t-1)(x,y,2)\\
	&= (2-t)(F_1(x,y, s\psi(x,y)), F_2(x,y,s\psi(x,y) ), 1) +(t-1)(x,y,2).
	\end{align*}
	Note that $F^s(x,y,1)$ has already been defined, by the definition of $F$ on the region $S_1$. The map $F^s|S_2$ maps the vertical line from $(x,y,1)$ to $(x,y,2)$ linearly onto the vertical line from $(F_1(x,y,s\psi(x,y)), F_2(x,y,s\psi(x,y)), 1)$ to $(x,y,2)$. Also, when $t=2$, this map is the identity.
	
	As in Lemma \ref{lemma:S_1}, we wish to show:
	
	\begin{lemm}\label{lemma:S_2}
		For all sufficiently small $s>0$ the map $F^s|S_2$ is a $K$ quasiconformal embedding, where $K=K(a,b)$ is independent of $k$, and locally $L$-Lipschitz, where $L=L(a)>0$ is independent of $k$. More precisely, for each sufficiently large $k\in \N$ there exists $\delta>0$ such that for all $s\in (0,\delta]$ the map $F^s|S_2$ is a locally $L$-Lipschitz $K$-quasiconformal embedding, with $L=L(a)$ and $K=K(a,b)$ independent of $k$.
	\end{lemm}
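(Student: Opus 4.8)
The plan is to follow the template of the proof of Lemma \ref{lemma:S_1}, with a simplification: since the domain $S_2=D_k\times[1,2]$ is already a product over which $F^s$ is defined by affine interpolation in the $t$-variable alone, there is no need for a straightening map, and it suffices to apply the stability Lemma \ref{lemma:stability} directly to the family $\{F^s|S_2\}_{s\in[0,1]}$. Thus I would first extend each $F^s$ smoothly to the open set $\Omega_k\times(1-\varepsilon_0,2+\varepsilon_0)$, where $\Omega_k$ is the $1/k$-neighborhood of $D_k$ and $\varepsilon_0>0$ is a small constant to be fixed; the maps $F^s$ are smooth and depend smoothly on $s$ because $\psi=\psi_k$, $F_1$ and $F_2$ are smooth. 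The whole statement then reduces to showing that $F^0|S_2$ is a locally $L$-Lipschitz $K$-quasiconformal embedding with $L=L(a)$ and $K=K(a,b)$ \emph{independent of $k$}: once this is known, the stability lemma applied with the compact set $D_k\times[1,2]$ and, say, $\eta=1$ produces for each large $k$ a threshold $\delta_k>0$ such that $F^s|S_2$ is locally $(L+1)$-Lipschitz and $(K+1)$-quasiconformal for $s\in(0,\delta_k]$, and $L+1,K+1$ are again independent of $k$.

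For the case $s=0$ one has $s\psi\equiv 0$, so using $F_1(x,y,0)=g(r)x/r$ and $F_2(x,y,0)=g(r)y/r$ the explicit formula is
\[
F^0(x,y,t)=\bigl(\lambda(r,t)\,x,\ \lambda(r,t)\,y,\ t\bigr),\qquad \lambda(r,t)=(2-t)\tfrac{g(r)}{r}+(t-1),
\]
i.e. $F^0$ acts on each horizontal slice as a radial map preserving the polar angle, with third coordinate equal to $t$. The next step is the differential: along $\ell_0$, i.e. at $(x,0,t)$ with $r=x$, the computation gives (compare \eqref{Df})
\[
DF^0(x,0,t)=\begin{pmatrix}(2-t)g'(x)+(t-1) & 0 & x-g(x)\\ 0 & \lambda(x,t) & 0\\ 0 & 0 & 1\end{pmatrix}.
\]
From Lemma \ref{lemma:g}\eqref{item:g(b)} one has $b\le g'(x)\le L$ and $b\le g(x)/x\le 1$, so for $t\in[1,2]$ the diagonal entries $(2-t)g'(x)+(t-1)$ and $\lambda(x,t)$ are convex combinations lying in $[b,L]$ and $[b,1]$ respectively, and $0\le x-g(x)\le 1-b$; hence this matrix is non-singular with $\det DF^0(x,0,t)\ge b^2$ and operator norm $\le C(a)$. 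Because the \emph{formula} for $F^0$ is independent of $k$ — only the domain $D_k$ varies, shrinking to the ray $\ell_0$ as $k\to\infty$ — smoothness and compactness then give, after fixing $\varepsilon_0$ small and $k$ large, that $DF^0$ is non-singular with $\det DF^0\ge b^2/2$ and $\|DF^0\|\le 2C(a)$ on all of $\Omega_k\times(1-\varepsilon_0,2+\varepsilon_0)$, with constants independent of $k$; this yields $\|DF^0\|^3\le K\det DF^0$ there for $K=K(a,b)$ and the local $L$-Lipschitz bound with $L=L(a)$. For the embedding property I would check injectivity directly: if $F^0(x,y,t)=F^0(x',y',t')$ then $t=t'$ from the third coordinate, and $(2-t)g(r)+(t-1)r=(2-t)g(r')+(t-1)r'$ from the norms of the first two coordinates, so $r=r'$ since the function $r\mapsto(2-t)g(r)+(t-1)r$ has derivative $(2-t)g'(r)+(t-1)\ge b>0$ for $t\in[1,2]$ (and still positive on a small neighborhood of $[1,2]$, as $g'\le L$), hence is strictly increasing; then $(x,y)=(x',y')$ because $\lambda(r,t)>0$. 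Being a smooth injective immersion, $F^0$ is an embedding, which finishes the verification. I would also note, exactly as in the discussion following Lemma \ref{lemma:S_1} and in Remark \ref{remark:boundary}, that $F^s(x,y,1)$ coincides with the previously defined $F|S_1$, that $F^s(x,y,2)=(x,y,2)$, that the sets $\ell_0\times[1,2]$ and $\ell_{\pi/k}\times[1,2]$ are preserved, and that $F^s(S_2)\subset D_k\times[1,2]\subset T_k$: the horizontal part of $F^s(x,y,t)$ is a convex combination of $(x,y)\in D_k$ and of the horizontal part of $F(x,y,s\psi(x,y))$, which lies in $D_k$ since the graph $\{F(x,y,s\psi(x,y)):(x,y)\in D_k\}$ was already shown to be contained in $T_k=D_k\times[-2,2]$, and $D_k$ is convex.

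The main obstacle, and the only nonroutine point, is the uniformity of the constants $L$ and $K$ in $k$: the stability lemma by itself only yields bounds depending on the compact domain $D_k\times[1,2]$, which changes with $k$. This is handled just as in Lemma \ref{lemma:S_1}, by exploiting that $F^0$ is a \emph{single} map, independent of $k$, so that all of the differential estimates can be carried out once and for all on a fixed neighborhood of $\ell_0\times[1,2]$ and then automatically transfer to $D_k\times[1,2]$ for $k$ large, since $D_k$ collapses onto $\ell_0$. Beyond this observation, the proof is a direct computation with the auxiliary function $g$ from Lemma \ref{lemma:g}.
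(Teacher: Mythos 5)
Your proposal is correct and follows essentially the same route as the paper: reduce via the stability Lemma \ref{lemma:stability} to the $s=0$ case, compute $DF^0$ along $\ell_0\times[1,2]$, use the bounds from Lemma \ref{lemma:g} to verify non-singularity, the Lipschitz and quasiconformality estimates, and check injectivity directly to conclude $F^0$ is an embedding. The extra details you spell out (explicit bounds $\det DF^0\geq b^2$, $\|DF^0\|\leq C(a)$, the verification that $F^s(S_2)\subset T_k$, and why the constants are $k$-independent because the \emph{formula} for $F^0$ does not change with $k$) are all consistent with what the paper leaves implicit.
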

	\begin{proof}
		We look again at the function $F^0$ with
		\begin{align*}
		F^0(x,y,t)= (2-t)(F_1(x,y,0), F_2(x,y,0),1) +(t-1)(x,y,2),
		\end{align*}
		which does not depend on $k$. If we show that $F^0$ is a locally Lipschitz quasiconformal embedding (with uniform constants) of a neighborhood of $S_2$, then by the stability Lemma \ref{lemma:stability} $F^s$ will be a quasiconformal embedding of $S_2$ for all small $s$.
		
		We have
		\begin{align*}
		F^0(x,y,t)= \left( \left( \frac{(2-t) g(r)}{r}+ (t-1) \right)x,\left( \frac{(2-t) g(r)}{r}+ (t-1) \right)y, t \right)
		\end{align*}
		in rectangular coordinates. We first show that $F^0$ is injective in a neighborhood of $S_2$. If $F^0(x,y,t)=F^0(x',y',t')$, then $t=t'$, $\theta=\theta'$, and $(2-t)g(r)+(t-1)r= (2-t)g(r')+(t-1)r'$. Since $g$ is increasing, we see that $r=r'$. 
		
		Next, we show that $DF^0$ is non-singular in a neighborhood of $S_2$, which will imply that $F^0$ is an embedding. The Lipschitz bounds and quasiconformality will then follow by smoothness, compactness, and the bounds on $g$ by Lemma \ref{lemma:g}, since we will have $\|DF^0(x,y,t)\|\lesssim 1$ in $S_2$ and $J_{F^0}(x,y,t)\simeq 1$. We have
		\begin{align*}
		DF^0(x,0,t)= \begin{pmatrix}
		(2-t)g'(x) +(t-1) & 0 & x-g(x) \\
		0 & (2-t)\frac{g(x)}{x}+(t-1) & 0 \\
		0 & 0 & 1
		\end{pmatrix}.
		\end{align*}
		This matrix is non-singular for $x\geq 0$ and $1\leq t \leq 2$. If $y$ is sufficiently close to $0$, then $DF^0(x,y,t)$ is also non-singular. Hence, for $(x,y,t)$ in a neighborhood of  $S_2= D_k\times [1,2]$ and a sufficiently large $k$ we have the desired conclusion.
	\end{proof}
	
	We define $F$ on $S_2$ to be $F^s$ for a sufficiently small $s>0$. Note that the choice of $s$ also affects the map $F|S_1'\cup E\cup S_1$ from the previous Section \ref{sec:S_1}. With a similar procedure with define the map $F$ on $S_2'$. The maps $F|S_2$ and $F|S_2'$ can be pasted with the map $F|S_1'\cup E\cup S_1$ from the previous Section \ref{sec:S_1} to yield a homeomorphism from $T_k$ onto $T_k$. In particular, as in Remark \ref{remark:boundary}, the linear extensions on vertical lines guarantee that $\ell_{\pi/k} \times [-2,2]$ and $\ell_{0}\times [-2,2]$ are mapped onto themselves, and the map $F$ restricted to $\partial T_k \setminus (\ell_{\pi/k} \times [-2,2]\cup \ell_0\times [-2,2])$ is the identity. 
	
	For each $\eta>0$ we can choose a large $k\in \N$ such that the map $F$ is $(1+\eta)$-quasiconformal in $E$; see Lemma \ref{lemma:E:qc}. Moreover, $F$ is $K$-quasiconformal and locally $L$-Lipschitz in each of $S_1,S_1',S_2,S_2'$, by Lemma \ref{lemma:S_1} and Lemma \ref{lemma:S_2}; here $K\geq 1$ and $L>0$ are uniform constants, independent of $k$. The boundaries of these sets have finite Hausdorff $2$-measure. Therefore, using Theorem \ref{thm:exceptional_set}, we conclude that $F$ is $K$-quasiconformal and locally $L$-Lipschitz on all of $T_k$. The convexity of $T_k$ implies that $F$ is actually $L$-Lipschitz on $T_k$. The proof of Proposition \ref{prop:basic_construction} is completed. 
	
	\section{Partition and composition scheme} \label{sec:partition_composition}
	Let $A\subset \R^2\times \{0\}$ be a Borel set as in Theorem \ref{thm:main1} and \ref{thm:main2}. In this section we will construct an  effective cover of $A$ by nested cylinders. Then we will describe a scheme for composing rescaled and translated versions of the mappings $F_k$ of Proposition \ref{prop:basic_construction} in order to pass to an appropriate limit and obtain the mapping $f$ described in Theorem \ref{thm:main1} and \ref{thm:main2}.   
	
	For this section, we recall the Vitali covering theorem. See for instance the book of Heinonen \cite[Theorem 1.6]{Hei:01}.
	
	\begin{thm}[Vitali covering theorem] \label{thm:vitali}
		Let $A \subset \mathbb{R}^n$, and let $\mathcal{F}$ be a collection of closed balls centered in $A$ such that $\inf\{r>0: B(z,r) \in \mathcal{F}\} = 0$ for all $z \in A$. There exists a countable subcollection $\mathcal{G} \subset \mathcal{F}$ of mutually disjoint balls such that $ A \setminus \bigcup_{B \in \mathcal{G}} B$ has measure zero.
	\end{thm}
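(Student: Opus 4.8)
The plan is to reduce to the case of a bounded set $A$ and then combine a greedy selection procedure with a $5r$-covering estimate. For the reduction, decompose $\R^n$ as the disjoint union of the open annuli $G_j=\{x\in\R^n: j-1<|x|<j\}$, $j\in\N$, together with the spheres $S_j=\{|x|=j\}$, which are Lebesgue null. Since the hypothesis furnishes balls of arbitrarily small radius at every point of $A$, the subfamily $\mathcal F_j=\{B\in\mathcal F: B\subset G_j,\ B \text{ has center in } A_j\}$, where $A_j=A\cap G_j$, still satisfies $\inf\{r: B(z,r)\in\mathcal F_j\}=0$ for each $z\in A_j$. If the bounded case produces, for every $j$, a countable disjoint $\mathcal G_j\subset\mathcal F_j$ with $|A_j\setminus\bigcup\mathcal G_j|=0$, then $\mathcal G=\bigcup_j\mathcal G_j$ is countable and pairwise disjoint (because the $G_j$ are pairwise disjoint and each $\mathcal G_j$ consists of balls inside $G_j$), and $A\setminus\bigcup\mathcal G\subset\bigcup_j(A_j\setminus\bigcup\mathcal G_j)\cup\bigcup_j S_j$ is null. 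So it is enough to treat $A$ bounded, in which case we may further assume that all balls of $\mathcal F$ lie in a fixed bounded open set, hence have radius at most some finite $R_0$.

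For the bounded case I would build $\mathcal G=\{B_1,B_2,\dots\}$ greedily. Given pairwise disjoint $B_1,\dots,B_k\in\mathcal F$, set $R_k=\sup\{r: B(z,r)\in\mathcal F,\ B(z,r)\cap B_i=\emptyset\ (1\le i\le k)\}$ (with $R_0=\sup_{B\in\mathcal F}r(B)$); if there is no such competitor ball, stop; otherwise choose $B_{k+1}\in\mathcal F$, disjoint from $B_1,\dots,B_k$, with radius $r_{k+1}>R_k/2$. If the process terminates at stage $k$, then any $z\in A\setminus\bigcup_{i\le k}B_i$ would lie at positive distance from the closed balls $B_i$, hence admit a small $B(z,r)\in\mathcal F$ disjoint from all of them --- a competitor --- which is a contradiction; so $A\subset\bigcup_{i\le k}B_i$ and we are done. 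Otherwise the sequence is infinite, and since the $B_j$ are disjoint and contained in a bounded set, $\sum_j|B_j|<\infty$, so $r_j\to 0$.

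The crux is the $5r$-type claim: every $B=B(z,r)\in\mathcal F$ that is disjoint from $B_1,\dots,B_k$ satisfies $B\subset 5B_{j_0}$ for some $j_0>k$, where $5B_{j_0}$ denotes the ball concentric with $B_{j_0}$ of five times the radius. Indeed $B$ cannot be disjoint from all $B_j$, for then it would be a competitor at every stage $m$, forcing $r\le R_m<2r_{m+1}\to 0$; so let $j_0>k$ be least with $B\cap B_{j_0}\neq\emptyset$, so $B$ is a competitor at stage $j_0-1$, giving $r\le R_{j_0-1}<2r_{j_0}$, and the triangle inequality (through a common point of $B$ and $B_{j_0}$) yields $B\subset 5B_{j_0}$. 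Now for $z\in A\setminus\bigcup_{i\le k}B_i$, pick a small $B(z,r)\in\mathcal F$ disjoint from $B_1,\dots,B_k$; the claim gives $z\in 5B_{j_0}$ for some $j_0>k$. Hence $A\setminus\bigcup_{i\le k}B_i\subset\bigcup_{j>k}5B_j$, so by countable subadditivity of Lebesgue outer measure $\left|A\setminus\bigcup_j B_j\right|\le 5^n\sum_{j>k}|B_j|$, which tends to $0$ as $k\to\infty$ since $\sum_j|B_j|<\infty$. Therefore $A\setminus\bigcup\mathcal G$ is null, as required. The main obstacle is precisely keeping the greedy procedure from stalling at a coarse scale, which is what the reduction to bounded $A$ resolves: it forces $\sum_j|B_j|<\infty$ and thus $r_j\to 0$, the only mechanism that makes the $5r$-tail estimate converge; a minor point is that $A$ need not be measurable, so all the measure bounds above should be read in terms of outer measure, which still yields the stated null conclusion.
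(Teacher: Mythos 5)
The paper does not give a proof of the Vitali covering theorem; it simply cites it as Theorem~1.6 of Heinonen's book \cite{Hei:01}, so there is nothing to compare against. Your argument is the standard one: greedy selection with $r_{k+1}>R_k/2$, the observation that disjointness and boundedness force $\sum_j |B_j|<\infty$ and hence $r_j\to 0$, and the $5r$-enlargement estimate $A\setminus\bigcup_{i\le k}B_i\subset\bigcup_{j>k}5B_j$. I checked the details and they hold: the reduction to bounded $A$ via the annuli $G_j$ is sound (for $z\in A\cap G_j$, since $G_j$ is open, all sufficiently small balls from $\mathcal F$ centered at $z$ lie in $G_j$, so $\mathcal F_j$ still has the infimum property at $z$); the termination case correctly uses that $\bigcup_{i\le k}B_i$ is closed so a point of $A$ outside it admits a disjoint small ball from $\mathcal F$, contradicting termination; in the infinite case, a ball disjoint from all $B_j$ would have radius $\le R_m<2r_{m+1}\to 0$, impossible since radii are positive; and the triangle inequality step $r\le R_{j_0-1}<2r_{j_0}$ together with a common point of $B$ and $B_{j_0}$ gives $B\subset 5B_{j_0}$. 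Your closing remark about reading the bounds in terms of outer measure is also the right care to take, since $A$ is not assumed measurable. The proof is correct.
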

	
	We refer to a cover of $A$ satisfying the conclusions of Theorem \ref{thm:vitali} as a {\it Vitali cover} of $A$. To simplify the notation, in what follows $\R^2$ is identified with its embedding $\R^2\times\{0\}$ in $\R^3$.

	Let $\mathcal{F}$ be the set of closed balls in $\mathbb{R}^2$ centered in $A$ of diameter at most $\lambda$, where $\lambda>0$ is a fixed parameter. Consider a Vitali cover $\mathcal{G} = \{G_1, G_2, \ldots \} \subset \mathcal{F}$ of $A$, with $G_j = \overline{B}(z_j,r_j)$. Let $U_j = G_j \times [-2r_j, 2r_j]$ and $\varphi_j$ be a similarity map from the cylinder $T$ to $U_j$. Note that the Vitali cover $\mathcal{G}$ may contain finitely many balls, but to simplify the notation we assume that $\mathcal{G}$ is countably infinite, as are the other Vitali covers below. 	
	
	For all $m \in \mathbb{N}$, choose a value $k_m \in \mathbb{N}$ sufficiently large so that $\eta(k_m)$ in Proposition \ref{prop:basic_construction} is less than $2^{-m}$. Let $g_1\colon  \mathbb{R}^3 \rightarrow \mathbb{R}^3$ be the mapping defined by 
	$$g_1(z) = \left\{ \begin{array}{ll} \varphi_j \circ F_{k_1} \circ \varphi_j^{-1}(z) & \text{ if } z \in U_j \\ z & \text{ if } z \notin \bigcup_j U_j\end{array} \right. ,$$
	where $F_{k_1}$ is  the mapping from Proposition \ref{prop:basic_construction} corresponding to the parameter $k_1$. The mapping $g_1$ is an $L$-Lipschitz, $K$-quasiconformal homeomorphism for the values $L,K$ in Proposition \ref{prop:basic_construction}. To verify this, first note that these properties hold locally for all $z$ lying in the interior of some cylinder $U_j$, $j \in \mathbb{N}$. Moreover, if $z \notin  (\bigcup_j U_j) \cup \R^2$, then $g_1$ is the identity map in a neighborhood of $z$. That $g_1$ is globally quasiconformal and Lipschitz now holds by Theorem \ref{thm:exceptional_set}, since $\bigcup_j \partial U_j$ and $\R^2$ have $\sigma$-finite Hausdorff 2-measure.

	We repeat this procedure inductively as follows. Let $J$ denote the multi-index $(j_1, \ldots, j_m)$ of length $m$. Assume we have a ball $G_J = \overline{B}(z_J, r_J) \subset \mathbb{R}^2$ with corresponding cylinder $U_J = G_J \times [-2r_J, 2r_J]$ and similarity mapping $\varphi_J\colon T \rightarrow U_J$. Let $\mathcal{F}_J$ denote the family of closed balls $\overline{B}(z,r)$ such that $z \in G_J$ and $\overline{B}(z,r) \times [-2r,2r] $ is contained in the open neighborhood $V_J = \varphi_J(V_{k_m})$ of \eqref{item:basic_construction(c)} in Proposition \ref{prop:basic_construction}. In particular, observe that $\overline{B}(z,r) \cap \varphi_J(Z_{k_m}) = \emptyset$, where $Z_{k_m}$ is the set described prior to Proposition \ref{prop:basic_construction}. We also require that $r\leq 2^{-m-1}$. Choose a Vitali cover $\mathcal{G}_J = \{G_{J,1}, G_{J,2}, \ldots \} \subset \mathcal{F}_J$ of $G_J \setminus \varphi_J(Z_{k_m})$, where $G_{J, j_{m+1}} = \overline{B}(z_{J, j_{m+1}}, r_{J, j_{m+1}})$ and $U_{J, j_{m+1}} = G_{J, j_{m+1}} \times [-2r_{J, j_{m+1}}, 2r_{J, j_{m+1}}]$.  Since the set $\varphi_J(Z_{k_m})$ has Lebesgue 2-measure zero, $\mathcal{G}_J$ also covers $G_J$ up to a set of Lebesgue 2-measure zero. Finally, let $\varphi_{J, j_{m+1}}: T \rightarrow U_{J, j_{m+1}}$ be a similarity mapping. 
	
	As above, let $g_m\colon \mathbb{R}^3 \rightarrow \mathbb{R}^3$ be the mapping defined by $g_m(z) = \varphi_J \circ F_{k_m} \circ \varphi_J^{-1}(z)$ if $z \in U_J$ for some multi-index $J$ of length $m$ and $g_m(z) = z$ otherwise. Like $g_1$, each of the mappings $g_m$ is $K$-quasiconformal and $L$-Lipschitz. Let $h_m = g_1 \circ g_2 \circ \cdots \circ g_m$.  Observe that the smaller-scale mappings are applied first in this composition.
	
	\begin{lemm}
		For all $m \in \mathbb{N}$, $h_m$ is $(eK)$-quasiconformal, where $K$ is the constant in Proposition \ref{prop:basic_construction}. 
	\end{lemm}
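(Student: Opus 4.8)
The plan is to verify quasiconformality off an exceptional set and apply Theorem~\ref{thm:exceptional_set}(i). Set
\[
E=(\R^2\times\{0\})\cup\bigcup_{|J|\le m}\partial U_J .
\]
Each $\partial U_J$ has finite $\mathcal H^2$-measure and $\R^2\times\{0\}$ has locally finite $\mathcal H^2$-measure, so $E$ has $\sigma$-finite $\mathcal H^2$-measure; I would first check that $E$ is closed. This uses the shape of the cylinders: each $U_J$ is a product of a disk in $\R^2$ with the symmetric interval $[-2r_J,2r_J]$, and for fixed $i$ the cylinders $\{U_J:|J|=i\}$ are pairwise disjoint with locally finite total volume. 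Hence any accumulation point of $\bigcup_{|J|=i}U_J$ not lying in one of these cylinders must have vanishing $t$-coordinate (the nearby cylinders must have radii tending to $0$), i.e.\ it lies in $\R^2\times\{0\}$; since $\partial U_J\subset U_J$, it follows that $\overline{\bigcup_{|J|\le m}\partial U_J}\subseteq E$, so $E$ is closed.

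Next I would study $h_m=g_1\circ\cdots\circ g_m$ near a point $z\notin E$. For each $i$ the point $z$ lies either in the interior of a unique level-$i$ cylinder or outside $\overline{\bigcup_{|J|=i}U_J}$, since $z$ avoids all cylinder boundaries and (being off the plane) the accumulation set. Let $\ell=\ell(z)\in\{0,1,\dots,m\}$ be the largest index for which $z$ belongs to a level-$i$ cylinder, and let $U_{J^{(i)}}$, $i\le\ell$, be the nested chain of cylinders containing $z$, where $J^{(i)}$ is the length-$i$ prefix of $J^{(\ell)}$. For $i>\ell$ the map $g_i$ is the identity on a neighborhood of $z$, so $h_m=g_1\circ\cdots\circ g_\ell$ near $z$. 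Tracking the point through this composition and writing $w_i=g_{i+1}\circ\cdots\circ g_\ell(z)$, I would use that $g_i$ maps each level-$i$ cylinder onto itself, so $w_i\in U_{J^{(i+1)}}$ for $i<\ell$, together with the containment of each sub-cylinder in the good neighborhood of its parent, $U_{J^{(i+1)}}\subseteq V_{J^{(i)}}=\varphi_{J^{(i)}}(V_{k_i})$. Thus $w_i$ lies in the open set $V_{J^{(i)}}$, on which $g_i$ is $(1+\eta(k_i))$-quasiconformal by Proposition~\ref{prop:basic_construction}\eqref{item:basic_construction(c)} transported by the similarity $\varphi_{J^{(i)}}$, while $w_\ell=z$ lies in the interior of $U_{J^{(\ell)}}$, on which $g_\ell$ is $K$-quasiconformal by Proposition~\ref{prop:basic_construction}\eqref{item:basic_construction(a)}. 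Since these sets are open and the maps continuous, on a small enough neighborhood of $z$ the map $h_m$ is a composition of maps that are $(1+\eta(k_1))$-, \dots, $(1+\eta(k_{\ell-1}))$-, and $K$-quasiconformal; by multiplicativity of the dilatation under composition, $h_m$ is $K\prod_{i=1}^{\ell-1}(1+\eta(k_i))$-quasiconformal near $z$.

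Since $\eta(k_i)<2^{-i}$ by the choice of $k_i$, and $1+x\le e^{x}$,
\[
K\prod_{i=1}^{\ell-1}\bigl(1+\eta(k_i)\bigr)\le K\prod_{i=1}^{\infty}\bigl(1+2^{-i}\bigr)\le K\exp\Bigl(\sum_{i=1}^{\infty}2^{-i}\Bigr)=eK .
\]
Hence $h_m$ is $(eK)$-quasiconformal on a neighborhood of every $z\notin E$, and Theorem~\ref{thm:exceptional_set}(i) yields that $h_m$ is $(eK)$-quasiconformal. The step I expect to be the main obstacle is the geometric bookkeeping of the previous paragraph, i.e.\ the claim that as one runs inward through the composition the image point remains in the almost-conformal region $V_J$ at every scale except the topmost one it meets; this relies on the nesting $U_{J,j}\subseteq V_J$ designed into the Vitali covers, and secondarily on the shape of the cylinders, which is what makes $E$ closed. (Alternatively one could avoid Theorem~\ref{thm:exceptional_set}: since $h_m$ is a composition of quasiconformal --- indeed, by Proposition~\ref{prop:basic_construction}\eqref{item:basic_construction(e)}, locally Lipschitz --- maps it is differentiable a.e., and one can bound $\|Dh_m(z)\|^3/J_{h_m}(z)\le\prod_i \|Dg_i(w_i)\|^3/J_{g_i}(w_i)$ for a.e.\ $z$ via the chain rule, the relevant null sets being avoided because the maps $g_{i+1}\circ\cdots\circ g_m$ and their inverses satisfy Lusin's condition (N); the same nesting then bounds the product by $eK$.)
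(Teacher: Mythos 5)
Your proof is correct and follows essentially the same route as the paper's: identify the maximal chain of nested cylinders $U_{J^{(1)}}\supset\cdots\supset U_{J^{(\ell)}}$ containing a point $z$ off the exceptional set, use the nesting $U_{J^{(i+1)}}\subset V_{J^{(i)}}$ together with the fact that $g_i$ maps $U_{J^{(i)}}$ onto itself to see that every map but the innermost one applied at $z$ is $(1+\eta(k_i))$-quasiconformal, multiply, and finish with Theorem~\ref{thm:exceptional_set}(i). Two cosmetic differences from the paper's version: the paper takes the exceptional set to be just $\R^2\times\{0\}$ (relying implicitly on the observation that you make explicit, that off the plane a point cannot accumulate small cylinders, so a neighbourhood meets only finitely many of them), whereas you enlarge $E$ by the cylinder boundaries $\bigcup_{|J|\le m}\partial U_J$, which cleanly sidesteps any boundary cases at the price of having to check $E$ is closed; and the paper bounds $\prod_{i=1}^{p-1}(1+\eta(k_i))\le e$ by the AM--GM inequality, while you use the sharper and more direct $1+x\le e^x$ with $\sum\eta(k_i)\le\sum 2^{-i}=1$. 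Both choices are fine and give the same constant.
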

	\begin{proof}
		Let $z \in \mathbb{R}^3 \setminus (\mathbb{R}^2\times \{0\})$ and fix $m\in \mathbb{N}$. Let $p \in \{0, \ldots, m\}$ be the largest value with the property that there is a  (possible empty) multi-index $J_p = (j_1, \ldots, j_p)$ such that $z \in U_{J_p}$. For each $q \in \{1, \ldots, p\}$, let $J_q = (j_1, \ldots, j_q)$. In this manner we obtain a (possibly empty) finite sequence $\mathcal{J}= \{J_1, \ldots, J_p\}$, where each $J_q$ is a multi-index of length $q$, such that $z \in U_J$ if $J \in \mathcal{J}$ and $z \notin U_J$ if $J \notin \mathcal{J}$. If $\mathcal{J}$ is empty, then $h_m$ {is the identity map in a neighborhood of $z$}, so we ignore this case. By the construction of the Vitali covers, $U_{J_q} \subset V_{J}$ for all $J \in \{J_1, \ldots, J_{q-1}\}$.  This implies $g_q|U_{J_q}$ is $(1+\eta(k_q))$-quasiconformal for all $q \in \{1, \ldots,p-1\}$, while $g_p$ is globally $K$-quasiconformal. Thus $h_{m}|U_{J_p}$ is $K'$-quasiconformal for
		\begin{equation} \label{equ:dilatation_bound}
		K' \leq (1+\eta(k_1)) \cdots (1+\eta(k_{p-1}))K.  
		\end{equation}
		
		Let $p'=p-1$. A simple way to bound the quantity on the right-hand side of \eqref{equ:dilatation_bound} is to use the inequality of arithmetic and geometric means to obtain
		$$(1+\eta(k_1)) \cdots (1+\eta(k_{p'}))\leq \left(\frac{p' + \eta(k_1) + \cdots + \eta(k_{p'})}{p'}\right)^{p'} \leq \left(\frac{p' + 1}{p'}\right)^{p'}, $$
		where the right-most term increases to the constant $e$ as $p'$ goes to infinity. Thus $h_m|U_{k_p}$ is $(eK)$-quasiconformal. 
		
		We have shown that $h_m$ is $(eK)$-quasiconformal on $\mathbb{R}^3 \setminus (\mathbb{R}^2\times \{0\})$. The result now follows from Theorem \ref{thm:exceptional_set}.
	\end{proof}
	
	It is known (see for instance Corollary 19.5 in \cite{Vais:71}) that any family of quasiconformal mappings with uniformly bounded distortion is equicontinuous and hence normal, provided each mapping in this family agree on a three-point set. Since each mapping $h_m$ fixes the boundary of the balls in $\mathcal{G}$, this condition is clearly satisfied. Thus there is a subsequence of $\{h_m\}_m$ (actually, easily seen to be $\{h_m\}_m$ itself) that converges uniformly on compact sets to a $(e^CK)$-quasiconformal mapping $f\colon \mathbb{R}^3 \rightarrow \mathbb{R}^3$. It is this mapping $f$ that satisfies the conclusions of Theorem \ref{thm:main1}, as we show in the next section.

	\section{Inverse absolute continuity} \label{sec:inverse_absolute_continuity}
	
	In this section, we verify that $f$ as defined at the end of Section \ref{sec:partition_composition} satisfies the conclusions of Theorem \ref{thm:main1}. The proof of this fact is similar to that of the corresponding result in the paper \cite{Rom:18}. Then, in Section \ref{sec:proof_thm2}, we modify the argument to yield Theorem \ref{thm:main2}.
	
	Let us assume first that $A$ is bounded, or more precisely that $\bigcup \mathcal{G}$ is contained in the ball $B(0,R)$ for some $R>0$.  
	
	Let $\mathcal{G}^1 = \mathcal{G}$ and $\mathcal{G}^m = \bigcup_{J \in \mathbb{N}^{m-1}} \mathcal{G}_J$ for all integers $m \geq 2$; that is, $\mathcal{G}^m$ is the collection of all balls $G_J$ of level $m$. Let $U= \bigcap_m \left( \bigcup \mathcal{G}^m\right) \subset \mathbb{R}^2$. In words, $U$ is the set of points in $\mathbb{R}^2$ that lie in a ball of $\mathcal{G}^m$ for infinitely many $m$.  Observe that $U$ is a full-measure subset of $\bigcup \mathcal G$, since the balls of each collection $\mathcal G^m$ cover $\bigcup \mathcal G$ up to a set of measure zero. Define for each $m \in \mathbb{N}$ a random variable $X_m$ on $U$ (equipped with Lebesgue 2-measure, normalized so that the measure of $U$ is 1) as follows. Let $G^m(z)$ be the ball $G_J$  of level $m$ from Section \ref{sec:partition_composition} that contains $z$, with $\varphi_z^m \colon \mathbb{D} \rightarrow G^m(z)$ the restriction of the corresponding similarity map $\varphi_J$. Such a ball $G_J$ exists for all $z \in U$ by the definition of $U$ and is unique. Then define $X_m$ by 
	$$X_m(z) = \left\{ \begin{array}{ll} b & \text{ if } z \in \varphi_z^m(B(0,a)) \\ L & \text{ if } z \in \varphi_z^m(\mathbb{D} \setminus B(0,a)) \end{array} \right. .$$
	Here $L$ is the Lipschitz constant of the map $g_m$, as in Section \ref{sec:partition_composition}. We recall that $L=L(a)$ depends only on $a$; see Proposition \ref{prop:basic_construction}.

	An important observation here is that each $X_m$ takes the value $b$ with probability $a^2$ and the value $L$ with probability $1-a^2$.  This is because, for a given multi-index $J$, $\mathcal{G}_J$ is a collection of disjoint sets that covers $G_J$ up to a set of Lebesgue 2-measure zero. In particular, the geometric mean of $X_m$ is 
	\begin{equation} \label{equ:mu}
	\mu = b^{a^2}L^{1-a^2} .
	\end{equation} 
	Recall that the geometric mean of $X_m$ is $e^{\mu'}$, where $\mu'$ is the usual arithmetic mean of $\log X_m$.  	Moreover, the random variables $X_m$ are all mutually independent. It is crucial here that each ball of $\mathcal G_J$ lies either in $\varphi_J(B(0,a)\times \{0\})$ or in the annulus $ \varphi_J((\D\setminus B(0,a))\times \{0\})$; see the construction in Section \ref{sec:partition_composition}. 
	
	We can now specify the requirement on the parameters $a,b$, which appear in Proposition \ref{prop:basic_construction}. The idea is that we want the contraction on the interior ball to have a larger influence than the expansion on the outer annulus. This is the case if $\mu<1$. To this end, first choose an arbitrary $a \in (0,1)$, and then choose $b \in (0,1)$ sufficiently small so that $\mu<1$ in \eqref{equ:mu}. Moreover, in Section \ref{sec:proof_thm2}, we will require  $a$ to satisfy $a^2 > 1/2$ and $b$ to be sufficiently small so that $b \leq 1/L$.

	Define next the random variable $Y_k$ on $U$ by $Y_k = X_1 \cdots X_k$. The strong law of large numbers implies that the sequence $(Y_k^{1/k})$ converges almost surely to $\mu$. The key observation here is that the random variable $X_m$ is a bound on the pointwise Lipschitz constant for the mapping $g_m$, while $Y_k$ is a bound on the pointwise Lipschitz constant for $h_k$. For the latter claim, one has to note that if a ball  $G_{J,j_{m+1}}$ of $\mathcal G_{J}$ is contained in $\varphi_J(B(0,a)\times \{0\})$, then the corresponding cylinder  $U_{J,j_{m+1}}$ is contained in the open set $V_J$, on which the map $g_m$ (where $J=(j_1,\dots,j_m)$) is a scaling by the factor $b$; see Proposition \ref{prop:basic_construction} and the construction in Section 3.
	
	We want to estimate the size of the set $f(A')$, where $A'$ is a full-measure subset of $U$. Pick $c \in (1, 1/\mu)$. Let $\mathcal{E}_m$ be the collection of balls $B= B(z,r) \in \mathcal{G}^m$ such that $Y_{m-1}|B \leq (c\mu)^{m-1}$ (observing that $Y_{m-1}$ is constant on $B$). Then it follows that $\diam f(B) \leq 2r(c\mu)^{m-1}$ for all $B \in \mathcal{E}^m$.  
	
	Let $\delta(m) = 2^{-m}$ and $E_m = \bigcup \mathcal{E}_m$. Observe that, since $\diam B \leq 2r \leq 2^{-m}$, we must have $\diam f(B) \leq 2^{-m}(c\mu)^{m-1} \leq \delta(m)$ for all $B \in \mathcal{E}_m$. We can estimate the 2-dimensional Hausdorff $\delta(m)$-content of $f(E_m)$ as follows:
	$$\mathcal{H}_{\delta(m)}^2(f(E_m)) \leq \sum_{B \in\mathcal{E}_m} r(B)^2 (c\mu)^{2(m-1)} \leq R^2 (c\mu)^{2(m-1)}.$$
	In the right-most inequality, we have used the fact that $\sum_{B\in \mathcal E_m} r(B)^2 \leq R^2$, from the assumption that $A$ is bounded. From this, we have that
	$$\mathcal{H}_{\delta(m)}^2\left(\bigcup_{k=m}^\infty f(E_k)\right) \leq R^2 \sum_{k=m}^\infty (c\mu)^{2(k-1)} = \frac{ R^2(c\mu)^{2(m-1)}}{1-(c\mu)^2}.$$
	Now let 
	\begin{equation}\label{equ:A'} 
	A'= \bigcap_{m=1}^\infty \bigcup_{k=m}^\infty E_k.
	\end{equation} 
	Then for all $m \in \mathbb{N}$, 
	$$\mathcal{H}_{\delta(m)}^2(f(A')) \leq \frac{ R^2(c\mu)^{2(m-1)}}{1-(c\mu)^2}.$$
	This shows that $\mathcal{H}^2(f(A')) = 0$  by letting $m \rightarrow \infty$. By the almost sure convergence of $(Y_k^{1/k})$ to $\mu$, the set $\bigcup_{k=m}^\infty E_k$ is a full-measure subset of $U$, and so $A'$ is also a full-measure subset of $U$. Since $\mathcal{H}^2(A \setminus U) = 0$, it follows that $A'$ has full measure in $A$. This completes the proof in the case that $A$ is bounded.

	The general case now follows easily by writing $A$ as the countable union of bounded sets. To be more precise about this, for each $k \in \mathbb{N}$, let $\mathcal{G}(k)$ denote the subcollection of $\mathcal{G}$ comprising those balls that are contained in $B(0,k)$ but not in $B(0,k-1)$. Then the preceding argument yields  a full-measure subset $A'(k) \subset A \cap (\bigcup \mathcal{G}(k))$ with the property that $f(A'(k))$ has Hausdorff 2-measure zero. Let $A' = \bigcup_{k=1}^\infty A'(k)$. Then $A'$ has the property that $f(A')$ has Hausdorff 2-measure zero, and moreover $A'$ has full measure in $A$. This completes the proof of Theorem \ref{thm:main1}.
	
	\subsection{Proof of Theorem \ref{thm:main2}} \label{sec:proof_thm2}
	
	Next, we show how to modify the preceding proof to obtain Theorem \ref{thm:main2} instead. This is the same idea as in Section 6 of \cite{Rom:18}.  As before, begin with the case that $A$ is bounded, with $\bigcup \mathcal{G}$ contained in the ball $B(0,R)$ for some $R>0$. Using the above notation, let $\widetilde{X}_m$ be the random variable on $U$ defined by
	$$\widetilde{X}_m(z) = \left\{ \begin{array}{ll} 1 & \text{ if } z \in \varphi_z^m(B(0,a)) \\ -1 & \text{ if } z \in \varphi_z^m(\mathbb{D} \setminus B(0,a)) \end{array} \right. .$$  
	Again, take Lebesgue 2-measure as the probability measure on $U$, normalized so that the measure of $U$ is 1. Let $\widetilde{Y}_m = \widetilde{X}_1 + \cdots + \widetilde{X}_m$. Now modify the composition scheme in Section \ref{sec:partition_composition} as follows. Fix $M \in \mathbb{N}$. Say $z \in G_J$ for some multi-index $J = (j_1, \ldots, j_m)$ of length $m$. If $\widetilde{Y}_{m'}(z) = -M$ for some $m' \leq m$, then modify the definition of $g_m$ from Section \ref{sec:partition_composition} so that $g_m(z) = z$ for all $z \in U_J$. Let $A''$ be the subset of $A'$ on which this modification does not occur, where $A'$ is as in \eqref{equ:A'}.  Recall that $A'$ has full measure in $A$.
	
	This modification guarantees that the resulting mapping is Lipschitz with Lipschitz constant $L^M$, since we have chosen $b$ so that $b \leq 1/L$. This is because $Y_m(z) = b^j L^{m-j}$ for some $j$ satisfying $j \geq m-j-M$ whenever $g_m(z)$ is not the identity on $z$. But then 
	$$Y_m(z) \leq \left(\frac{1}{L}\right)^j {L}^{m-j} = L^{m-2j} \leq L^{M}.$$  
	
	Since the mapping is unchanged on $A''$, it follows that $\mathcal{H}^2(f(A'')) = 0$. So the final step is to give a bound on $\mathcal{H}^2(A'\setminus A'')$, as claimed in Theorem \ref{thm:main2}. 
	
	Consider $\widetilde{Y}_m$ as a random walk that steps up with probability $p = a^2$ and steps down with probability $q = 1-a^2$, and terminates if $\widetilde{Y}_m=-M$.  Assume first that $M=1$. If $p \geq 1/2$, this random walk terminates with probability $P_1 = (1-p)/p$. To see this, let $r$ be the probability of this event occurring. Then $r$ satisfies the recursive relationship $r = q+pr^2$. This has solutions $r=q/p$ and $r=1$. However, since $\widetilde{Y}_m/m$ converges almost surely to $p-q$, this implies that $r \neq 1$. We conclude that $r = q/p = (1-p)/p$. See the book of Klenke \cite[Sec. 17.5]{Kle:08} for more detail.  For arbitrary $M \in \mathbb{N}$, it follows by induction that the corresponding random walk terminates with probability $P_M = (1-p)^M/p^M$. 
	
	Interpreting this probabilistic statement back in terms of the measure, we have shown that $\mathcal{H}^2(A'\setminus A'') \leq P_M\mathcal{H}^2(U) = P_M\mathcal{H}^2(A')$. Recall our previous assumption that $p = a^2 > 1/2$, justifying this conclusion. Taking $M$ sufficiently large, we can ensure that $P_M$ is arbitrarily close to zero. For the given $\kappa>0$ in the statement of Theorem \ref{thm:main2}, we thus obtain a set $A''$ with  $\mathcal{H}^2(A'\setminus A'') < \kappa$. Since $A'$ has full measure in $A$, it follows as well that $\mathcal{H}^2(A\setminus A'') \leq \mathcal{H}^2(A'\setminus A'') < \kappa$. This concludes the bounded case.

	The case where $A$ is not bounded proceeds again by writing $A$ as a countable union of bounded sets. Following the notation in the corresponding portion of the proof of Theorem \ref{thm:main1}, write $\mathcal{G}$ as the disjoint union $\mathcal{G} = \bigcup_k \mathcal{G}(k)$, with $A'(k) \subset A \cap (\bigcup \mathcal{G}(k))$ as described there. For each $k \in \mathbb{N}$, apply the truncation procedure of this section to modify $f$ on a subset of $\bigcup \mathcal{G}(k)$. In doing so, use a sufficiently large value $M(k)$ so that the resulting set $A''(k) \subset A'(k)$ satisfies $\mathcal{H}^2(A'(k)\setminus A''(k)) < 2^{-k}\kappa$. The set $A'' = \bigcup_{k=1}^\infty A''(k)$ then satisfies $\mathcal{H}^2(f(A''(k))) = 0$ and $\mathcal{H}^2(A'\setminus A'') = \mathcal{H}^2(A\setminus A'') < \kappa$. Moreover, $f$ is $L^{M(k)}$-Lipschitz on the set $\bigcup \mathcal{G}(k)$. This shows that $f$ is locally Lipschitz. 
	
	%\printbibliography
	\bibliographystyle{alpha}
	\bibliography{EuclideanIACbiblio} 
	
\end{document}